\documentclass[12pt]{amsart}

\usepackage[utf8]{inputenc}
\usepackage{amssymb,amsfonts,amsmath,amsthm,mathrsfs,dsfont,microtype,mathtools}
\usepackage[dvipsnames]{xcolor}
\usepackage[shortlabels]{enumitem}
\usepackage{tikz}
\usetikzlibrary{arrows.meta,backgrounds}

\usepackage{cite}

\usepackage{hyperref}
\hypersetup{
	colorlinks=true,
	linkcolor=blue,
	citecolor=blue,
	urlcolor=blue,
	pdfauthor={
		Fernando de Ávila Silva,
		Alexandre Kirilov,
		André Pedroso Kowacs
	},
	pdftitle={
		Sobolev Regularity in Mixed and Isotropic Scales
		for Vector Fields on the Torus
	}
}

 \usepackage[a4paper,margin=2.54cm]{geometry}

\usepackage{setspace}
\numberwithin{equation}{section}
\allowdisplaybreaks[2]
\theoremstyle{plain}
\newtheorem{theorem}{Theorem}[section]
\newtheorem{proposition}[theorem]{Proposition}
\newtheorem{lemma}[theorem]{Lemma}
\newtheorem{corollary}[theorem]{Corollary}

\theoremstyle{definition}
\newtheorem{definition}[theorem]{Definition}
\newtheorem{example}[theorem]{Example}

\theoremstyle{remark}
\newtheorem{remark}[theorem]{Remark}

\newcommand{\R}{\mathbb{R}}

\newcommand{\T}{\mathbb{T}}
\newcommand{\Hmix}[1]{H^{#1}_{\operatorname{mix}}(\T^2)}

\title[Sobolev Regularity in Mixed and Isotropic Scales]
{Sobolev Regularity in Mixed and Isotropic Scales \\ 
	for Vector Fields on the Torus}
\author{Fernando de {\'A}vila Silva}
\address{
	Universidade Federal do Paran\'a,
	Departamento de Matem\'atica,
	CEP 81531-980, Curitiba, Brazil
}
\email{fernando.avila@ufpr.br}

\author{Alexandre Kirilov}
\address{
	Universidade Federal do Paran\'a,
	Departamento de Matem\'atica,
	CEP 81531-980, Curitiba, Brazil
}
\email{akirilov@ufpr.br}

\author{Andr\'e Pedroso Kowacs}
\address{
	Departamento de Matem\'atica,
	Instituto de Ci\^encias Matem\'aticas e de Computa\c{c}\~ao,
	Universidade de S\~ao Paulo, S\~ao Paulo, Brazil
}
\email{andrekowacs@gmail.com}

\subjclass[2020]{35B65, 35F05, 46E35, 11J82, 35H10}

\keywords{
	Sobolev spaces of dominating mixed smoothness,
	vector fields on the torus,
	isotropic Sobolev regularity,
	small divisors,
	irrationality measure,
	normal-form conjugation
}

\date{}

\begin{document}
	
	\begin{abstract}
		We study regularity, existence, and uniqueness of solutions to
		vector fields on the two-dimensional torus in Sobolev
		spaces of dominating mixed smoothness, which measure regularity
		separately in the two variables.
		
		For constant-coefficient vector fields, we obtain families of mixed 
		smoothness estimates describing how the gain or loss of regularity 
		can be distributed between the two variables. In the nonreal case,
		a gain of one derivative can be distributed between the two directions, 
		whereas for real irrational coefficients the loss is governed by the 
		irrationality measure of the coefficient. We also establish sharpness 
		below the corresponding arithmetic threshold and describe the 
		rational and Liouville obstructions.
		
		For real-valued variable coefficients, direct estimates for the
		periodic Fourier-mode equations yield mixed smoothness regularity
		results and their isotropic and classical consequences. We then use
		a periodic conjugation to the averaged constant-coefficient normal
		form. Although this conjugation introduces an additional loss in the
		mixed smoothness scale, it preserves isotropic Sobolev orders and
		therefore transfers the sharp constant-coefficient isotropic theory
		to the variable-coefficient setting. In the nonresonant regimes, we
		also obtain existence and uniqueness under the natural zero-mean
		compatibility condition.
	\end{abstract}
	
	\maketitle
	
		{
		\small  
		\setstretch{1} 
		\tableofcontents 
	}

%======================================
%======================================
\section{Introduction}
%======================================
%======================================

Regularity, existence, and uniqueness of solutions to periodic
first-order equations are strongly influenced by the arithmetic
properties of their coefficients. For vector fields and evolution
operators on tori, these questions are closely related to the
classical theories of global hypoellipticity and global solvability
and have been studied from several perspectives; see, for instance,
\cite{GW1972_pams,GW1973_top,GW1973_tams,CC2000_cpde,
	Petr2011_tams,Berg1999_tams,
	KMR2021_jfa,Kiri_Kow_Wagn_ToroEsfera}.

Beyond the qualitative smoothness theory, one may ask how much
Sobolev regularity is gained or lost when inverting such operators.
A quantitative isotropic Sobolev theory was developed in
\cite{KowKir2026-jfa}, where quantitative loss-of-derivative
estimates were obtained for Fourier multipliers on the torus. In
particular, for a constant real vector field
\[
L=\partial_t-a\partial_x,
\qquad
a\in\mathbb R,
\]
the isotropic Sobolev loss in the irrational case is governed by the
irrationality measure \(\mu(a)\), introduced in
Definition~\ref{def_complete_irrationality}. The isotropic scale, however,
measures the two Fourier variables jointly and does not distinguish
how regularity is distributed between the \(t\)- and \(x\)-directions.

The purpose of the present paper is to refine this analysis by
studying regularity separately in the two variables. We work in
Sobolev spaces of dominating mixed smoothness and consider both
constant-coefficient vector fields and real-valued
variable-coefficient vector fields on \(\mathbb T^2\). This leads to
families of directional regularity estimates that are invisible in a
single isotropic Sobolev exponent. For variable coefficients, we also
compare the estimates obtained by a direct Fourier-mode analysis with
those derived through a periodic conjugation to the averaged
constant-coefficient normal form.

More precisely, for \(s_1,s_2\in\mathbb R\), we consider
\[
H_{\operatorname{mix}}^{(s_1,s_2)}(\mathbb T^2)
=
\bigg\{
u\in\mathcal D'(\mathbb T^2):
\sum_{(\tau,\xi)\in\mathbb Z^2}
\langle\tau\rangle^{2s_1}
\langle\xi\rangle^{2s_2}
|\widehat u(\tau,\xi)|^2<\infty
\bigg\}.
\]

These spaces are classical in the theory of function spaces with
dominating mixed smoothness; see, for instance,
\cite{Tem1993_book,Tem2018_book,DungTemlyakovUllrich2018,
	KuhnSickelUllrich2015,CobosKuhnSickel2016,
	KuhnSickelUllrich2021,Triebel2019}.
The passage from isotropic to mixed smoothness is substantive rather
than notational. In fact, apart from the trivial \(L^2\) case, a
Sobolev space of dominating mixed smoothness does not coincide with
any isotropic Sobolev space:
\[
H_{\operatorname{mix}}^{(s_1,s_2)}(\mathbb T^2)
=
H^s(\mathbb T^2)
\quad\Longleftrightarrow\quad
s_1=s_2=s=0.
\]

Thus the mixed scale retains directional information that is lost
when the two Fourier variables are combined into a single isotropic
weight.

We first consider the constant-coefficient vector field
\[
L=\partial_t-c\partial_x,
\qquad
c=a+ib,\quad a,b\in\mathbb R.
\]

If \(b\neq0\) and
\(f\in
H_{\operatorname{mix}}^{(k_1,k_2)}(\mathbb T^2)\), we show in
Theorem~\ref{theo_regularity_cte} that every distributional solution
of \(Lu=f\) satisfies
\[
u\in
H_{\operatorname{mix}}^{
	(k_1+\theta,k_2+1-\theta)}
(\mathbb T^2),
\qquad
0\leq\theta\leq1.
\]

Thus a gain of one derivative can be distributed between the two
variables.

The situation changes when \(c=a\in\mathbb R\setminus\mathbb Q\).
If \(a\) has finite irrationality measure, then, for every
\(r>\mu(a)\),
\[
u\in
H_{\operatorname{mix}}^{
	(k_1+\theta,k_2+1-r-\theta)}
(\mathbb T^2),
\qquad
-r\leq\theta\leq1.
\]

The total shift is now \(1-r\), reflecting the small-divisor loss,
while \(\theta\) describes how this loss is distributed between the
two variables. The range of \(\theta\) is optimal for the
corresponding uniform multiplier estimates. We also prove sharpness
of the arithmetic threshold below the irrationality measure: if
\(r<\mu(a)\), Proposition~
\ref{prop:sharpness-real-constant-mixed} constructs data and a single
distributional solution for which the preceding mixed smoothness
conclusion fails simultaneously for every \(\theta\in\mathbb R\).
The critical value \(r=\mu(a)\) depends on finer arithmetic
properties.

The rational and Liouville cases are also considered, and in both
cases we obtain obstructions to universal mixed smoothness estimates
with any prescribed finite loss; see
Propositions~\ref{prop:rational-no-mixed-regularity} and
\ref{prop:liouville-no-mixed-regularity}.

We next turn to the real-valued variable-coefficient vector field
\[
L=\partial_t-a(t)\partial_x,
\qquad
a\in C^\infty(\mathbb T;\mathbb R),
\]
and denote its averaged coefficient by
\[
a_0
\coloneq
\frac{1}{2\pi}\int_0^{2\pi}a(t)\,dt.
\]

After taking the Fourier transform in \(x\), the equation reduces to a
family of periodic first-order ordinary differential equations
indexed by the \(x\)-frequency. Their small divisors are determined
by \(a_0\), whereas differentiation in \(t\) introduces additional
powers of the \(x\)-frequency. Estimating these periodic mode
equations directly, and combining integer-order estimates with
interpolation and duality, gives directional regularity estimates for
arbitrary real Sobolev orders.

More precisely, suppose that
\(a_0\in\mathbb R\setminus\mathbb Q\) has finite irrationality
measure, and let \(u\in\mathcal D'(\mathbb T^2)\) be a solution of
\(Lu=f\), with
\[
f\in
H_{\operatorname{mix}}^{(k_1,k_2)}(\mathbb T^2).
\]

Then Theorem~\ref{theo_regularity_real} gives, for every
\(r>\mu(a_0)\), every \(q\leq k_1\), and every
\(0\leq\theta\leq1\),
\[
u\in
H_{\operatorname{mix}}^{
	(q+\theta,\,
	k_2+1-r-|q|-\theta)}
(\mathbb T^2).
\]

The term \(|q|\) records the additional cost of estimating the
periodic mode equations at negative Sobolev orders. In particular,
when \(k_1\geq0\) and \(\theta=0\), this yields the simpler family
\[
u\in
H_{\operatorname{mix}}^{
	(\sigma,k_2+1-r-\sigma)}
(\mathbb T^2),
\qquad
0\leq\sigma\leq k_1+1.
\]

A second approach to the variable-coefficient problem is provided by
the periodic normal-form conjugation
\[
(\Psi_{\pm A}u)(t,x)
=
u(t,x\mp A(t)),
\qquad
A(t)
=
\int_0^t\bigl(a(\sigma)-a_0\bigr)\,d\sigma.
\]
Since \(a-a_0\) has zero mean, \(A\) is \(2\pi\)-periodic, and
\[
\Psi_A
(\partial_t-a(t)\partial_x)
=
(\partial_t-a_0\partial_x)\Psi_A.
\]

This conjugation provides a useful comparison between the mixed and
isotropic Sobolev scales. In the mixed smoothness scale,
Proposition~\ref{prop:conjugation-mixed-smoothness} gives
\[
\Psi_{\pm A}\colon
H_{\operatorname{mix}}^{(s_1,s_2)}(\mathbb T^2)
\longrightarrow
H_{\operatorname{mix}}^{
	(s_1,s_2-|s_1|)}
(\mathbb T^2).
\]

The loss in the second index comes from differentiating the
oscillatory factor \(e^{\pm i\xi A(t)}\) with respect to \(t\), which
produces powers of the \(x\)-frequency. Consequently, the mixed
smoothness estimates obtained through conjugation are dominated by
those given by the direct modal method. In this sense, the direct
analysis preserves more directional information.

By contrast, in the isotropic Sobolev scale, the operators
\(\Psi_{\pm A}\) are bounded automorphisms of
\(H^s(\mathbb T^2)\) for every \(s\in\mathbb R\). Hence the
constant-coefficient isotropic theory transfers to the
variable-coefficient vector field without an additional Sobolev loss.
In particular,
\[
f\in H^s(\mathbb T^2)
\quad\Longrightarrow\quad
u\in H^{s+1-r}(\mathbb T^2),
\qquad
r>\mu(a_0),
\]
whenever \(a_0\in\mathbb R\setminus\mathbb Q\) has finite
irrationality measure; see
Corollary~\ref{cor:isotropic-regularity-variable}.

This also yields a direct classical regularity consequence. If
\(f\in H^k(\mathbb T^2)\), with \(k>\mu(a_0)\), then
\[
u\in C^m(\mathbb T^2)
\qquad
\text{for every }m<k-\mu(a_0).
\]

By comparison, the classical regularity obtained from the direct
mixed smoothness estimates is guaranteed whenever
\[
3m-\frac12<k-\mu(a_0).
\]

Thus the normal-form argument gives the stronger classical threshold,
while the direct mixed smoothness estimates retain separate
information on the regularity in the two variables; see
Corollaries~\ref{cor:variable-real-classical-regularity} and
\ref{cor:isotropic-variable-classical-regularity}.

In the nonresonant regimes considered above, the regularity estimates
are accompanied by solvability and uniqueness. The natural
compatibility condition is \(\widehat f(0,0)=0\), and, under this
condition, there exists a unique distributional solution satisfying
\(\widehat u(0,0)=0\).

The paper is organized as follows.
Section~\ref{sec:preliminaries} recalls the basic properties of
Sobolev spaces of dominating mixed smoothness and their relation to
isotropic Sobolev spaces, while
Section~\ref{sec:fourier-reductions} develops the Fourier reductions
and small-divisor estimates used throughout the paper.
Section~\ref{sec:constant-coefficients} treats the constant-coefficient
problem, including mixed regularity, sharpness, and the rational and
Liouville obstructions.
Sections~\ref{sec:variable-real} and~\ref{sec:normal-form} then study
real-valued variable coefficients, first by the direct modal method
and subsequently through the normal-form conjugation, with isotropic,
classical, and arithmetic consequences.

%======================================
%======================================
\section{Sobolev Spaces of Dominating Mixed Smoothness and Auxiliary Results}
\label{sec:preliminaries}
%======================================
%======================================

In this section, we fix the notation and recall the basic properties
of periodic Sobolev spaces of dominating mixed smoothness, together
with the auxiliary estimates used throughout the paper.

We set \(\mathbb T\coloneq \mathbb R/2\pi\mathbb Z\), use
\((t,x)\in\mathbb T^2\) for the variables, and denote the Fourier
frequencies associated with \(t\) and \(x\) by \(\tau\) and \(\xi\),
respectively. For \(w\in\mathbb \R^n\), $n=1,2$, we write
\[
\langle w\rangle\coloneq (1+|w|^2)^{1/2}.
\]
In particular, 
\(\langle(\tau,\xi)\rangle = (1+\tau^2+\xi^2)^{1/2}\).

We denote by \(\mathbb N_0\) the set of nonnegative integers.
For nonnegative quantities \(A\) and \(B\), we write
\[
A\lesssim_{\alpha_1,\ldots,\alpha_N}B
\]
if there exists a constant \(C>0\), depending only on the displayed
parameters, such that \(A\leq CB\). The notation
\(A\gtrsim_{\alpha_1,\ldots,\alpha_N}B\) is defined analogously, and
we write
\[
A\asymp_{\alpha_1,\ldots,\alpha_N}B
\]
when both inequalities hold.

%=================================================
\subsection{Fourier analysis and Sobolev spaces of dominating mixed smoothness} \
%=================================================

For \(f\in L^1(\mathbb T^2)\), its Fourier coefficients are defined by
\[
\widehat f(\tau,\xi)
\coloneq 
\frac{1}{(2\pi)^2}
\int_{\mathbb T^2}
f(t,x)e^{-i(t\tau+x\xi)}\,dt\,dx,
\qquad
(\tau,\xi)\in\mathbb Z^2.
\]

The same notation is used for \(f\in\mathcal D'(\mathbb T^2)\), with
the Fourier coefficients defined by duality. Every distribution
\(f\in\mathcal D'(\mathbb T^2)\) admits the Fourier expansion
\[
f(t,x)
=
\sum_{(\tau,\xi)\in\mathbb Z^2}
\widehat f(\tau,\xi)e^{i(t\tau+x\xi)},
\]
with convergence in \(\mathcal D'(\mathbb T^2)\).

We shall use the standard characterizations of smooth functions and
distributions in terms of their Fourier coefficients. A sequence
\(\{a_{\tau,\xi}\}_{(\tau,\xi)\in\mathbb Z^2}\) is the sequence of
Fourier coefficients of a unique distribution on \(\mathbb T^2\) if
and only if it has at most polynomial growth, that is, if there exists
\(N\in\mathbb N_0\) such that
\[
|a_{\tau,\xi}|
\lesssim
\langle(\tau,\xi)\rangle^N,
\qquad
(\tau,\xi)\in\mathbb Z^2.
\]

Moreover, a distribution \(f\in\mathcal D'(\mathbb T^2)\) belongs to
\(C^\infty(\mathbb T^2)\) if and only if its Fourier coefficients are
rapidly decreasing, namely, if for every \(N\in\mathbb N_0\),
\[
|\widehat f(\tau,\xi)|
\lesssim_N
\langle(\tau,\xi)\rangle^{-N},
\qquad
(\tau,\xi)\in\mathbb Z^2.
\]

In the latter case, the Fourier series converges in
\(C^\infty(\mathbb T^2)\).

If \(f\in L^2(\mathbb T^2)\), then Plancherel's identity reads
\[
\|f\|_{L^2(\mathbb T^2)}^2
=
(2\pi)^2
\sum_{(\tau,\xi)\in\mathbb Z^2}
|\widehat f(\tau,\xi)|^2.
\]

For \(f\in L^1(\mathbb T^2)\), its partial Fourier coefficient with
respect to the \(x\)-variable is defined by
\[
\widehat f(t,\xi)
\coloneq 
\frac{1}{2\pi}
\int_{\mathbb T}
f(t,x)e^{-ix\xi}\,dx,
\qquad
\xi\in\mathbb Z.
\]

For a general distribution \(f\in\mathcal D'(\mathbb T^2)\), the
partial Fourier coefficient \(\widehat f(\cdot,\xi)\) is understood as
an element of \(\mathcal D'(\mathbb T)\) and is defined by duality.
Then
\[
f(t,x)
=
\sum_{\xi\in\mathbb Z}
\widehat f(t,\xi)e^{ix\xi},
\]
with convergence in \(\mathcal D'(\mathbb T^2)\). If
\(f\in C^\infty(\mathbb T^2)\), the convergence also holds in
\(C^\infty(\mathbb T^2)\). We use the same notation for full and
partial Fourier coefficients; the meaning will always be clear from
the displayed variables.

For \(s\in\mathbb R\), the isotropic Sobolev space
\(H^s(\mathbb T^2)\) consists of all
\(f\in\mathcal D'(\mathbb T^2)\) such that
\[
\|f\|_{H^s(\mathbb T^2)}^2
\coloneq 
\sum_{(\tau,\xi)\in\mathbb Z^2}
\langle(\tau,\xi)\rangle^{2s}
|\widehat f(\tau,\xi)|^2
<\infty.
\]

The one-dimensional Sobolev spaces are defined analogously, with norm
\[
\|v\|_{H^s(\mathbb T)}^2
\coloneq 
\sum_{\tau\in\mathbb Z}
\langle\tau\rangle^{2s}
|\widehat v(\tau)|^2.
\]

We next introduce the Sobolev scale of dominating mixed smoothness
used throughout the paper. In contrast with the isotropic Sobolev
scale, whose Fourier weight depends on the joint frequency
\((\tau,\xi)\), the spaces considered below are defined by product
weights, allowing the regularity in the two variables to be measured
independently.

For the classical theory of Sobolev spaces of dominating mixed
smoothness, including their tensor-product structure and equivalent
Fourier descriptions, see, for instance,
\cite{KuhnSickelUllrich2021,CobosKuhnSickel2016,
	Tem1993_book,Tem2018_book}.
Here we work with arbitrary real orders, defined directly through the
same product Fourier weights.

\begin{definition}
	Let \(s_1,s_2\in\mathbb R\). The anisotropic Sobolev space of dominating mixed smoothness
	\(H_{\operatorname{mix}}^{(s_1,s_2)}(\mathbb T^2)\)
	consists of all \(f\in\mathcal D'(\mathbb T^2)\) such that
	\begin{equation}\label{eq:def-mixed-Sobolev}
		\|f\|_{
			H_{\operatorname{mix}}^{(s_1,s_2)}
			(\mathbb T^2)
		}^2
		\coloneq
		\sum_{(\tau,\xi)\in\mathbb Z^2}
		\langle\tau\rangle^{2s_1}
		\langle\xi\rangle^{2s_2}
		|\widehat f(\tau,\xi)|^2
		<\infty.
	\end{equation}
\end{definition}

Equipped with the inner product associated with
\eqref{eq:def-mixed-Sobolev},
\(H_{\operatorname{mix}}^{(s_1,s_2)}(\mathbb T^2)\)
is a Hilbert space. Moreover, trigonometric polynomials are dense in
\(H_{\operatorname{mix}}^{(s_1,s_2)}(\mathbb T^2)\) for
every \(s_1,s_2\in\mathbb R\).

The product structure of the Fourier weight is the characteristic
feature of dominating mixed smoothness and distinguishes this scale
from the isotropic Sobolev scale. With the Fourier norms fixed above,
there is a canonical isometric identification
\[
H_{\operatorname{mix}}^{(s_1,s_2)}(\mathbb T^2)
\cong
H^{s_1}(\mathbb T)
\widehat\otimes_2
H^{s_2}(\mathbb T),
\]
where \(\widehat\otimes_2\) denotes the completed Hilbert tensor product.

For every
\(f\in H_{\operatorname{mix}}^{(s_1,s_2)}(\mathbb T^2)\),
it follows directly from the definition that
\begin{equation}\label{eq:mixed-partial-characterization}
	\|f\|_{
		H_{\operatorname{mix}}^{(s_1,s_2)}
		(\mathbb T^2)
	}^2
	=
	\sum_{\xi\in\mathbb Z}
	\langle\xi\rangle^{2s_2}
	\|\widehat f(\cdot,\xi)\|_{H^{s_1}(\mathbb T)}^2.
\end{equation}

%In particular, for every \(\xi\in\mathbb Z\),
%\begin{equation}\label{eq:partial-coefficient-Hs-bound}
%	\|\widehat f(\cdot,\xi)\|_{H^{s_1}(\mathbb T)}
%	\leq
%	\langle\xi\rangle^{-s_2}
%	\|f\|_{
%		H_{\operatorname{mix}}^{(s_1,s_2)}
%		(\mathbb T^2)
%	}.
%\end{equation}

The scale
\(H_{\operatorname{mix}}^{(s_1,s_2)}(\mathbb T^2)\)
is monotone in each index. More precisely, if
\(r_j\geq s_j\) for \(j=1,2\), then
\begin{equation*}
	H_{\operatorname{mix}}^{(r_1,r_2)}(\mathbb T^2)
	\hookrightarrow
	H_{\operatorname{mix}}^{(s_1,s_2)}(\mathbb T^2).
\end{equation*}

The Fourier characterizations recalled above also yield
\begin{equation*}
	C^\infty(\mathbb T^2)
	=
	\bigcap_{N\in\mathbb N_0}
	H_{\operatorname{mix}}^{(N,N)}(\mathbb T^2),
	\qquad
	\mathcal D'(\mathbb T^2)
	=
	\bigcup_{N\in\mathbb N_0}
	H_{\operatorname{mix}}^{(-N,-N)}(\mathbb T^2).
\end{equation*}

The sesquilinear pairing
\[
\langle f,g\rangle_0
\coloneq 
\sum_{(\tau,\xi)\in\mathbb Z^2}
\widehat f(\tau,\xi)
\overline{\widehat g(\tau,\xi)},
\]
initially defined for trigonometric polynomials, extends continuously to
\[
H_{\operatorname{mix}}^{(s_1,s_2)}(\mathbb T^2)
\times
H_{\operatorname{mix}}^{(-s_1,-s_2)}(\mathbb T^2).
\]

For each \(g\in H_{\operatorname{mix}}^{(-s_1,-s_2)}(\mathbb T^2)\),
the map \(f\longmapsto\langle f,g\rangle_0\) 
is a continuous linear functional on
\(H_{\operatorname{mix}}^{(s_1,s_2)}(\mathbb T^2)\),
and every continuous linear functional arises uniquely in this way.
Consequently,
\begin{equation*}
	\left(
	H_{\operatorname{mix}}^{(s_1,s_2)}(\mathbb T^2)
	\right)'
	\cong
	H_{\operatorname{mix}}^{(-s_1,-s_2)}(\mathbb T^2),
\end{equation*}
where the identification is conjugate-linear and isometric.

For a compatible couple of Banach spaces \((X_0,X_1)\), denote by
\([X_0,X_1]_\theta\), \(0<\theta<1\), the corresponding complex
interpolation space. The mixed Sobolev scale is stable under complex
interpolation. More precisely,
\begin{equation*}\label{eq:mixed-interpolation}
	\left[
	H_{\operatorname{mix}}^{(s_1,s_2)}(\mathbb T^2),
	H_{\operatorname{mix}}^{(r_1,r_2)}(\mathbb T^2)
	\right]_\theta
	=
	H_{\operatorname{mix}}^{
		((1-\theta)s_1+\theta r_1,\,
		(1-\theta)s_2+\theta r_2)
	}(\mathbb T^2),
\end{equation*}
with equivalence of norms. This follows from the standard complex
interpolation result for weighted \(L^2\)-spaces; see, for instance,
\cite[Section~5.5]{BerghLofstrom1976}.

In particular, boundedness estimates at two pairs of mixed orders can
be interpolated to obtain estimates at every intermediate pair.

%=================================================
\subsection{Comparison with isotropic Sobolev spaces and classical regularity} \
%=================================================

The relation between isotropic Sobolev spaces and Sobolev spaces of
dominating mixed smoothness is classical; see, for instance,
\cite{KuhnSickelUllrich2015}. 
For later use, we record the precise two-dimensional formulation for
arbitrary real indices and include, for completeness, the short proof,
which reduces to a direct comparison of the corresponding Fourier
weights.

\begin{proposition}\label{prop:isotropic-mixed-embeddings}
	Let \(s_1,s_2,\sigma,\rho\in\mathbb R\). Then:
	\begin{enumerate}
		\item
		\(
		H^\sigma(\mathbb T^2) \hookrightarrow
		H_{\operatorname{mix}}^{(s_1,s_2)}(\mathbb T^2)
		\quad\Longleftrightarrow\quad
		\sigma\geq\max\{s_1,s_2,s_1+s_2\};
		\)
		
		\item
		\(
		H_{\operatorname{mix}}^{(s_1,s_2)}(\mathbb T^2) \hookrightarrow
		H^\rho(\mathbb T^2) \quad\Longleftrightarrow\quad
		\rho\leq\min\{s_1,s_2,s_1+s_2\}.
		\)
	\end{enumerate}
\end{proposition}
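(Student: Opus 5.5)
Both embeddings are statements about Fourier multipliers between weighted \(\ell^2\) spaces on \(\mathbb Z^2\), so the plan is to reduce each to a pointwise comparison of weights. Since both norms are weighted \(\ell^2\) norms of the same coefficient sequence \(\widehat f\), the embedding \(H^\sigma(\mathbb T^2)\hookrightarrow H_{\operatorname{mix}}^{(s_1,s_2)}(\mathbb T^2)\) holds if and only if
\[
\sup_{(\tau,\xi)\in\mathbb Z^2}
\frac{\langle\tau\rangle^{s_1}\langle\xi\rangle^{s_2}}{\langle(\tau,\xi)\rangle^{\sigma}}<\infty .
\]
Sufficiency is immediate. For necessity, test on the single exponentials \(e^{i(t\tau+x\xi)}\), whose two norms are exactly the two weights. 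The same reduction applies to (2), with the supremum of \(\langle(\tau,\xi)\rangle^{\rho}\langle\tau\rangle^{-s_1}\langle\xi\rangle^{-s_2}\). So the whole proof is the elementary analysis of when this quotient is bounded on \(\mathbb Z^2\).

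\textbf{Necessity.} I will evaluate the quotient along three families of frequencies: \((\tau,0)\), \((0,\xi)\), and the diagonal \((n,n)\). On these families \(\langle(\tau,\xi)\rangle\) is comparable to \(\langle\tau\rangle\), \(\langle\xi\rangle\), and \(\langle n\rangle\), respectively. For (1) the quotient therefore behaves like \(\langle\tau\rangle^{s_1-\sigma}\), \(\langle\xi\rangle^{s_2-\sigma}\), and \(\langle n\rangle^{s_1+s_2-\sigma}\). Boundedness forces \(\sigma\geq s_1\), \(\sigma\ge s_2\), and \(\sigma\ge s_1+s_2\). The same three test families give \(\rho\le s_1\), \(\rho\le s_2\), and \(\rho\le s_1+s_2\) in (2).

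\textbf{Sufficiency.} By symmetry, assume \(|\tau|\ge|\xi|\), so that \(\langle(\tau,\xi)\rangle\asymp\langle\tau\rangle\) and \(\langle\xi\rangle\le\langle\tau\rangle\). The quotient in (1) is then comparable to \(\langle\tau\rangle^{s_1-\sigma}\langle\xi\rangle^{s_2}\), and I split according to the sign of \(s_2\).
\begin{itemize}
\item If \(s_2\ge0\), I use \(\langle\xi\rangle^{s_2}\le\langle\tau\rangle^{s_2}\) to bound the quotient by \(\langle\tau\rangle^{s_1+s_2-\sigma}\le1\).
\item If \(s_2<0\), I use \(\langle\xi\rangle^{s_2}\le1\) to bound it by \(\langle\tau\rangle^{s_1-\sigma}\le1\).
\end{itemize}
The region \(|\xi|\ge|\tau|\) is handled symmetrically. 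In that case the bound uses \(\sigma\ge s_2\) together with \(\sigma\ge s_1+s_2\).

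For (2), in the same region the quotient is comparable to \(\langle\tau\rangle^{\rho-s_1}\langle\xi\rangle^{-s_2}\), and again I split on the sign of \(s_2\).
\begin{itemize}
\item If \(s_2\ge0\), I bound \(\langle\xi\rangle^{-s_2}\le1\) and use \(\rho\le s_1\).
\item If \(s_2<0\), I bound \(\langle\xi\rangle^{-s_2}\le\langle\tau\rangle^{-s_2}\) and use \(\rho\le s_1+s_2\).
\end{itemize}
Alternatively, (2) follows from (1) by duality, since \((H^\sigma)'=H^{-\sigma}\) and \((H_{\operatorname{mix}}^{(s_1,s_2)})'=H_{\operatorname{mix}}^{(-s_1,-s_2)}\). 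Indeed, \(-\rho\ge\max\{-s_1,-s_2,-s_1-s_2\}\) is equivalent to the stated condition on \(\rho\).

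\textbf{Main obstacle.} There is no real obstacle. The only point needing care is the sign case split in the sufficiency step, which explains why \(s_1+s_2\) enters alongside \(s_1\) and \(s_2\). I would also be careful to state that all implicit constants depend only on \(s_1,s_2,\sigma,\rho\).
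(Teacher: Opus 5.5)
Your proposal is correct and follows essentially the same route as the paper. Both arguments compare the Fourier weights pointwise on the regions $|\tau|\ge|\xi|$ and $|\xi|\ge|\tau|$; your sign split on $s_2$ is exactly what yields the paper's bounds $\langle\tau\rangle^{\min\{s_1,s_1+s_2\}}\le\langle\tau\rangle^{s_1}\langle\xi\rangle^{s_2}\le\langle\tau\rangle^{\max\{s_1,s_1+s_2\}}$, and necessity comes from the same three test families $e^{iNt}$, $e^{iNx}$, $e^{iN(t+x)}$.
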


\begin{proof}
	Set \(m\coloneq\min\{s_1,s_2,s_1+s_2\}\) and
	\(M\coloneq\max\{s_1,s_2,s_1+s_2\}\).
	We first observe that
	\begin{equation}\label{eq:mixed-isotropic-weight-comparison}
		\langle(\tau,\xi)\rangle^m
		\lesssim
		\langle\tau\rangle^{s_1}
		\langle\xi\rangle^{s_2}
		\lesssim
		\langle(\tau,\xi)\rangle^M.
	\end{equation}
	
	Indeed, if
	\(\langle\tau\rangle\geq\langle\xi\rangle\), then
	\(\langle(\tau,\xi)\rangle\asymp\langle\tau\rangle\) and
	\[
	\langle\tau\rangle^{\min\{s_1,s_1+s_2\}}
	\leq
	\langle\tau\rangle^{s_1}\langle\xi\rangle^{s_2}
	\leq
	\langle\tau\rangle^{\max\{s_1,s_1+s_2\}}.
	\]
	
	The complementary region
	\(\langle\xi\rangle\geq\langle\tau\rangle\) is analogous, with
	\(s_1\) and \(s_2\) interchanged. This proves
	\eqref{eq:mixed-isotropic-weight-comparison}.
	
	If \(\sigma\geq M\), the upper bound in
	\eqref{eq:mixed-isotropic-weight-comparison} immediately gives
	\(H^\sigma(\mathbb T^2)\hookrightarrow
	H_{\operatorname{mix}}^{(s_1,s_2)}(\mathbb T^2)\).
	Similarly, if \(\rho\leq m\), the lower bound gives
	\(H_{\operatorname{mix}}^{(s_1,s_2)}(\mathbb T^2)
	\hookrightarrow H^\rho(\mathbb T^2)\).
	
	For necessity, consider
	\[
	u_N^{(1)}(t,x)=e^{iNt},
	\qquad
	u_N^{(2)}(t,x)=e^{iNx},
	\qquad
	u_N^{(3)}(t,x)=e^{iN(t+x)}.
	\]
	
	If \(H^\sigma(\mathbb T^2)\hookrightarrow
	H_{\operatorname{mix}}^{(s_1,s_2)}(\mathbb T^2)\),
	applying the embedding to these three families and letting
	\(N\to\infty\) yields, respectively,
	\(s_1\leq\sigma\), \(s_2\leq\sigma\), and
	\(s_1+s_2\leq\sigma\). Hence \(\sigma\geq M\).
		
	Likewise, if
	\(H_{\operatorname{mix}}^{(s_1,s_2)}(\mathbb T^2)
	\hookrightarrow H^\rho(\mathbb T^2)\),
	the same three families give \(\rho\leq s_1\), \(\rho\leq s_2\),
	and \(\rho\leq s_1+s_2\). Hence \(\rho\leq m\).
\end{proof}

In the nonnegative range, Proposition~\ref{prop:isotropic-mixed-embeddings} yields
\[
H^{s_1+s_2}(\mathbb T^2)
\hookrightarrow
H_{\operatorname{mix}}^{(s_1,s_2)}(\mathbb T^2)
\hookrightarrow
H^{\min\{s_1,s_2\}}(\mathbb T^2),
\qquad
s_1,s_2\geq0.
\]

In particular,
\[
H^{2s}(\mathbb T^2)
\hookrightarrow
H_{\operatorname{mix}}^{(s,s)}(\mathbb T^2)
\hookrightarrow
H^s(\mathbb T^2),
\qquad s>0,
\]
whereas
\[
H^s(\mathbb T^2)
\hookrightarrow
H_{\operatorname{mix}}^{(s,s)}(\mathbb T^2)
\hookrightarrow
H^{2s}(\mathbb T^2),
\qquad s<0.
\]

For \(s\neq0\), both inclusions in the corresponding chains are
strict. Indeed, set
\[
a_N\coloneq
\langle N\rangle^{-\frac{3s}{2}-\frac12},
\]
and consider
\[
u_{\mathrm{ax}}(t,x)
=
\sum_{N\geq1}a_Ne^{iNt},
\qquad
u_{\mathrm{diag}}(t,x)
=
\sum_{N\geq1}a_Ne^{iN(t+x)}.
\]

A direct computation gives
\[
\begin{aligned}
	\|u_{\mathrm{ax}}\|_{H_{\operatorname{mix}}^{(s,s)}}^2
	\asymp
	\|u_{\mathrm{diag}}\|_{H^s}^2
	&\asymp
	\sum_{N\geq1}\langle N\rangle^{-s-1},
	\\
	\|u_{\mathrm{ax}}\|_{H^{2s}}^2
	\asymp
	\|u_{\mathrm{diag}}\|_{H_{\operatorname{mix}}^{(s,s)}}^2
	&\asymp
	\sum_{N\geq1}\langle N\rangle^{s-1}.
\end{aligned}
\]

The first series converges precisely when \(s>0\), and the second
precisely when \(s<0\). Thus both inclusions above are strict, and
\[
H_{\operatorname{mix}}^{(s,s)}(\mathbb T^2)
\neq
H^s(\mathbb T^2),
\qquad
s\neq0.
\]

For \(s=0\),
\[
H_{\operatorname{mix}}^{(0,0)}(\mathbb T^2)
=
H^0(\mathbb T^2)
=
L^2(\mathbb T^2)
\]
with equivalent norms.

We next record the corresponding classical regularity consequence.
Let
\[
f\in
H_{\operatorname{mix}}^{(s_1,s_2)}(\mathbb T^2),
\]
and let \(j_1,j_2\in\mathbb N_0\) satisfy
\[
s_1>j_1+\frac12,
\qquad
s_2>j_2+\frac12.
\]

For \(0\leq p\leq j_1\) and \(0\leq q\leq j_2\),
Cauchy--Schwarz inequality gives
\begin{align*}
	\sum_{(\tau,\xi)\in\mathbb Z^2}
	|\tau|^p|\xi|^q|\widehat f(\tau,\xi)|
	\leq
	\|f\|_{
		H_{\operatorname{mix}}^{(s_1,s_2)}
	}
	\times
	\left(
	\sum_{\tau\in\mathbb Z}
	\langle\tau\rangle^{-2(s_1-p)}
	\right)^{1/2}
	\left(
	\sum_{\xi\in\mathbb Z}
	\langle\xi\rangle^{-2(s_2-q)}
	\right)^{1/2}.
\end{align*}

Both series on the right-hand side converge. Hence, the Fourier series
with coefficients
\[
(i\tau)^p(i\xi)^q\widehat f(\tau,\xi)
\]
converges absolutely and uniformly on \(\mathbb T^2\). Its sum is a
continuous function representing the distributional derivative
\(\partial_t^p\partial_x^qf\). Therefore,
\[
\partial_t^p\partial_x^qf\in C^0(\mathbb T^2),
\qquad
0\leq p\leq j_1,
\quad
0\leq q\leq j_2.
\]

The preceding argument yields the following embedding for the mixed
smoothness scale.

\begin{corollary}\label{cor:mixed-Cm-embedding}
	Let \(m\in\mathbb N_0\) and let \(s_1,s_2\in\mathbb R\) satisfy
	\[
	s_1>m+\frac12,
	\qquad
	s_2>m+\frac12.
	\]
	Then
	\begin{equation*}
		H_{\operatorname{mix}}^{(s_1,s_2)}
		(\mathbb T^2)
		\hookrightarrow
		C^m(\mathbb T^2)
	\end{equation*}
	continuously. In fact, every
	\(f\in
	H_{\operatorname{mix}}^{(s_1,s_2)}(\mathbb T^2)\)
	satisfies
	\[
	\partial_t^p\partial_x^q f\in C^0(\mathbb T^2),
	\qquad
	0\leq p,q\leq m,
	\]
	and
	\[
	\max_{0\leq p,q\leq m}
	\|\partial_t^p\partial_x^q f\|_{C^0(\mathbb T^2)}
	\lesssim_{s_1,s_2,m}
	\|f\|_{
		H_{\operatorname{mix}}^{(s_1,s_2)}
		(\mathbb T^2)
	}.
	\]
\end{corollary}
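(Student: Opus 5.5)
The proof is a quantitative version of the Cauchy--Schwarz computation carried out just before the statement, now with \(j_1=j_2=m\). Fix \(f\in H_{\operatorname{mix}}^{(s_1,s_2)}(\mathbb T^2)\) and integers \(0\le p,q\le m\). We first show that the Fourier series with coefficients \((i\tau)^p(i\xi)^q\widehat f(\tau,\xi)\) converges absolutely, with a bound on the sum of absolute values. Writing
\[
|\tau|^p|\xi|^q|\widehat f(\tau,\xi)|
\le
\langle\tau\rangle^{s_1}\langle\xi\rangle^{s_2}|\widehat f(\tau,\xi)|\cdot\langle\tau\rangle^{-(s_1-p)}\langle\xi\rangle^{-(s_2-q)},
\]
Cauchy--Schwarz gives
\[
\sum_{(\tau,\xi)\in\mathbb Z^2}|\tau|^p|\xi|^q|\widehat f(\tau,\xi)|
\le
C_{s_1,p}\,C_{s_2,q}\,\|f\|_{H_{\operatorname{mix}}^{(s_1,s_2)}(\mathbb T^2)},
\qquad
C_{s,p}\coloneq\Bigl(\sum_{n\in\mathbb Z}\langle n\rangle^{-2(s-p)}\Bigr)^{1/2}.
\]
Each constant is finite because \(2(s_j-p)\ge 2(s_j-m)>1\). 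Taking the maximum over the finitely many pairs \((p,q)\) gives one constant depending only on \(s_1,s_2,m\).

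Next we identify the limits with derivatives of \(f\). By the Weierstrass M-test, the series \(\sum(i\tau)^p(i\xi)^q\widehat f(\tau,\xi)e^{i(t\tau+x\xi)}\) converges uniformly to a continuous function \(g_{p,q}\), and \(\|g_{p,q}\|_{C^0}\) is bounded by the absolute sum above. Uniform convergence implies convergence in \(\mathcal D'(\mathbb T^2)\). Since differentiation is continuous on \(\mathcal D'\) and acts termwise on the Fourier expansion of \(f\) (which converges in \(\mathcal D'\)), we get \(g_{p,q}=\partial_t^p\partial_x^q f\) in \(\mathcal D'\). This yields both the continuity of each \(\partial_t^p\partial_x^q f\) and the stated sup-norm estimate.

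It remains to upgrade to membership in \(C^m(\mathbb T^2)\). We must show that \(f\) (after identification with \(g_{0,0}\)) has continuous classical partial derivatives of all orders \(|\alpha|\le m\), and that these agree with the distributional ones. The derivatives of total order at most \(m\) are \(\partial_t^p\partial_x^q\) with \(p+q\le m\), so \(p,q\le m\) and they are covered by the previous step. To pass to classical derivatives, consider the trigonometric partial sums \(S_N\) over \(|\tau|,|\xi|\le N\). For all \(p,q\le m\), the derivatives \(\partial_t^p\partial_x^q S_N\) converge uniformly by the absolute bound. The standard theorem on uniform convergence of derivatives then shows that the limit is \(C^m\), with classical derivatives equal to the \(g_{p,q}\). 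This gives the continuous embedding, with norm controlled by the estimate. None of these steps is a real obstacle; the only point needing care is this last one, matching distributional and classical derivatives, and it is handled by the uniform convergence of the differentiated partial sums.
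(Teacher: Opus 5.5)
Your proof is correct and follows the paper's own argument: the paper proves the corollary by taking \(j_1=j_2=m\) in the preceding Cauchy--Schwarz computation, which gives absolute and uniform convergence of the differentiated Fourier series and identifies their sums with the distributional derivatives \(\partial_t^p\partial_x^q f\). Your final step, using uniform convergence of \(\partial_t^p\partial_x^q S_N\), only makes explicit the passage from continuous distributional derivatives to classical \(C^m\) regularity, which the paper leaves implicit.
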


\begin{proof}
	Take \(j_1=j_2=m\) in the preceding argument.
\end{proof}

For comparison, the usual isotropic Sobolev embedding gives
\begin{equation*}
	H^\sigma(\mathbb T^2)
	\hookrightarrow
	C^m(\mathbb T^2)
	\quad\text{if}\quad
	\sigma>m+1.
\end{equation*}

Notice that the preceding conclusion is stronger than the mere embedding
into \(C^m(\mathbb T^2)\): it gives continuity of every mixed derivative
\(\partial_t^p\partial_x^q f\) with \(0\leq p,q\leq m\), even when
\(p+q>m\).

%==========================================
%==========================================
\section{Fourier Reductions and Small-Divisor Estimates}
\label{sec:fourier-reductions}
%==========================================
%==========================================

This section collects the Fourier reductions and small-divisor
estimates that underlie the analysis of both constant-coefficient and
real-valued variable-coefficient vector fields.

%=================================================
\subsection{Full Fourier reduction for constant coefficients} \
%=================================================

Let
\[
L_c\coloneq \partial_t-c\partial_x,
\quad c=a+ib, \quad a,b\in\mathbb R.
\]

For \(u,f\in\mathcal D'(\mathbb T^2)\), taking Fourier coefficients
in both variables, the equation \(L_cu=f\) becomes
\begin{equation}\label{eq:Fourier-equation-constant}
	i(\tau-c\xi)\widehat u(\tau,\xi)
	=
	\widehat f(\tau,\xi),
	\qquad
	(\tau,\xi)\in\mathbb Z^2.
\end{equation}

The resonance set of \(L_c\) is
\[
\mathcal R_c
\coloneq 
\left\{
(\tau,\xi)\in\mathbb Z^2:
\tau-c\xi=0
\right\}.
\]

If \(c\notin\mathbb R\) or
\(c\in\mathbb R\setminus\mathbb Q\), then
\(\mathcal R_c=\{(0,0)\}\). If
\(c=p_0/q_0\in\mathbb Q\), where
\(p_0\in\mathbb Z\), \(q_0\in\mathbb N\), and
\(\gcd(p_0,q_0)=1\), then
\(\mathcal R_c=\{(mp_0,mq_0):m\in\mathbb Z\}\).

Equation \eqref{eq:Fourier-equation-constant} imposes the
compatibility conditions
\[
\widehat f(\tau,\xi)=0,
\qquad
(\tau,\xi)\in\mathcal R_c.
\]

In particular, \(\widehat f(0,0)=0\), which is equivalent to the
zero-mean condition for \(f\).

Away from the resonance set, the Fourier coefficients of every
solution are uniquely determined by
\begin{equation*}
	\widehat u(\tau,\xi)
	=
	\frac{\widehat f(\tau,\xi)}
	{i(\tau-c\xi)},
	\qquad
	(\tau,\xi)\notin\mathcal R_c.
\end{equation*}

%=================================================
\subsection{Lower bounds for constant-coefficient symbols} \
%=================================================

When \(b\neq0\), the symbol \(\tau-c\xi\) is elliptic as a real-linear
function of \((\tau,\xi)\), and no small-divisor phenomenon occurs.
For real irrational coefficients, on the other hand, the small-divisor
behavior of the symbol is governed by Diophantine approximation.
Following \cite{KowKir2026-jfa}, we recall the relevant arithmetic
quantity.

\begin{definition} \label{def_complete_irrationality}
	Let \(a\in\mathbb R\setminus\mathbb Q\). Its irrationality measure
	is defined by
	\begin{equation} \label{def_irrationality}
		\mu(a)
		\coloneq 
		\sup
		\left\{
		\nu>0:
		\left|a-\frac{p}{q}\right|
		<
		q^{-\nu}
		\text{ for infinitely many }
		(p,q)\in\mathbb Z\times\mathbb N
		\right\}.
	\end{equation}
\end{definition}

Every irrational number satisfies \(\mu(a)\geq2\), and \(a\) is a
Liouville number if and only if \(\mu(a)=\infty\). If
\(\mu(a)<\infty\), then, for every \(r>\mu(a)\), there exists a
constant \(C_{a,r}>0\) such that
\begin{equation}\label{eq:diophantine-lower-bound}
	|\tau-a\xi|
	\geq
	C_{a,r}\langle\xi\rangle^{1-r},
	\qquad
	(\tau,\xi)\in\mathbb Z^2,
	\quad
	\xi\neq0.
\end{equation}

Indeed, by the definition of \(\mu(a)\), for every
\(r>\mu(a)\) the inequality
\[
\left|a-\frac{p}{q}\right|<q^{-r}
\]
has only finitely many solutions
\((p,q)\in\mathbb Z\times\mathbb N\). For \(\xi\neq0\), taking
\(q=|\xi|\) and \(p=\operatorname{sgn}(\xi)\tau\), we have
\[
|\tau-a\xi|
=
|\xi|
\left|
a-\frac{p}{q}
\right|.
\]

Hence, for all but finitely many \(\xi\neq0\) and every
\(\tau\in\mathbb Z\),
\[
|\tau-a\xi|
\geq
|\xi|^{1-r}.
\]

For each of the finitely many remaining values of \(\xi\),
irrationality of \(a\) gives
\[
\inf_{\tau\in\mathbb Z}|\tau-a\xi|>0.
\]

After adjusting the constant and replacing \(|\xi|\) by
\(\langle\xi\rangle\), we obtain
\eqref{eq:diophantine-lower-bound}.

\begin{remark}\label{rem:endpoint-irrationality-measure}
	The definition of \(\mu(a)\) guarantees
	\eqref{eq:diophantine-lower-bound} for every \(r>\mu(a)\), but
	does not by itself determine whether the same estimate holds at
	the endpoint \(r=\mu(a)\). For a general irrational number, the
	endpoint estimate need not follow from the value of \(\mu(a)\)
	alone.
	
	For quadratic irrational numbers, however, Liouville's approximation
	theorem for algebraic numbers yields 
	\eqref{eq:diophantine-lower-bound} with \(r=\mu(a)=2\).
	Consequently, all the results proved below for
	\(r>\mu(a)\) remain valid with \(r=2\) when \(a\) is a quadratic
	irrational. More generally, the same conclusion holds with
	\(r=\mu(a)\) whenever
	\eqref{eq:diophantine-lower-bound} is valid when the supremum in 
	\eqref{def_irrationality} is attained.
\end{remark}

The following proposition establishes the symbol estimates underlying
the constant-coefficient regularity results in the mixed smoothness
scale. The parameter \(\theta\) describes how the corresponding gain
or loss of regularity is distributed between the \(t\)- and
\(x\)-directions.

\begin{proposition}\label{prop:mixed-symbol-lower-bounds}
	Let \(c=a+ib\in\mathbb C\).
	
	\begin{enumerate}
		\item[(i)]
		If \(b\neq0\), then
		\begin{equation}\label{diof_ineq_imaginary}
			|\tau-c\xi|
			\gtrsim_{a,b}
			\langle\tau\rangle^\theta
			\langle\xi\rangle^{1-\theta},
			\qquad
			0\leq\theta\leq1,
		\end{equation}
		for every
		\((\tau,\xi)\in\mathbb Z^2\setminus\{(0,0)\}\).
		
		\item[(ii)]
		If \(b=0\) and \(a\in\mathbb R\setminus\mathbb Q\)
		has finite irrationality measure, then, for every
		\(r>\mu(a)\),
		\begin{equation}\label{diof_ineq_real}
			|\tau-a\xi|
			\gtrsim_{a,r}
			\langle\tau\rangle^\theta
			\langle\xi\rangle^{1-r-\theta},
			\qquad
			-r\leq\theta\leq1,
		\end{equation}
		for every
		\((\tau,\xi)\in\mathbb Z^2\setminus\{(0,0)\}\).
	\end{enumerate}
	
	In each case, the implicit constant can be chosen independently
	of \(\theta\) in the indicated interval.
\end{proposition}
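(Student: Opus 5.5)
The plan is to reduce, in both cases, to the two endpoint values of \(\theta\). The right-hand sides are log-convex in \(\theta\): writing \(\langle\tau\rangle^\theta\langle\xi\rangle^{\alpha-\theta}=\langle\xi\rangle^{\alpha}(\langle\tau\rangle/\langle\xi\rangle)^\theta\), this is monotone in \(\theta\). Hence for \(\theta\) in an interval \([\theta_0,\theta_1]\) it is bounded by the larger of its values at \(\theta_0\) and \(\theta_1\). So it suffices to prove the estimate at the two endpoints, each with a fixed constant; the maximum of the two constants then works uniformly for every \(\theta\) in between. This also settles the claimed independence of the constant from \(\theta\).

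For (i), with \(b\neq0\), we use the explicit modulus
\[
|\tau-c\xi|^2=(\tau-a\xi)^2+b^2\xi^2 .
\]
This is a positive definite quadratic form in \((\tau,\xi)\in\mathbb R^2\), so \(|\tau-c\xi|\gtrsim_{a,b}|(\tau,\xi)|\). On \(\mathbb Z^2\setminus\{(0,0)\}\) we have \(|(\tau,\xi)|\asymp\langle(\tau,\xi)\rangle\), which dominates both \(\langle\tau\rangle\) and \(\langle\xi\rangle\). This gives the endpoints \(\theta=1\) and \(\theta=0\), and the interpolation remark above finishes (i).

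For (ii), the endpoint \(\theta=0\) is exactly \eqref{eq:diophantine-lower-bound} when \(\xi\neq0\). When \(\xi=0\) we have \(\tau\neq0\), so \(|\tau|\geq1=\langle\xi\rangle^{1-r}\). The endpoint \(\theta=-r\) asks for
\[
|\tau-a\xi|\gtrsim\langle\tau\rangle^{-r}\langle\xi\rangle .
\]
We split by the relative size of \(\tau\) and \(\xi\).
\begin{itemize}
\item If \(|\tau|\geq 2|a||\xi|+1\), then \(|\tau-a\xi|\geq|\tau|/2\gtrsim\langle\tau\rangle\). This dominates \(\langle\tau\rangle^{-r}\langle\xi\rangle\), since \(\langle\xi\rangle\lesssim\langle\tau\rangle\) there and \(r>0\).
\item Otherwise \(\langle\tau\rangle\lesssim_a\langle\xi\rangle\). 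Then \(\langle\tau\rangle^{-r}\langle\xi\rangle\) is controlled only if we compare with \(\langle\xi\rangle^{1-r}\), and the direction of the needed inequality must be checked. Since \(\langle\tau\rangle^{-r}\geq\langle\xi\rangle^{-r}\) in this region, the bound at \(\theta=0\) does \emph{not} immediately give \(\theta=-r\). Instead I would use that in this region \(|\xi|\) and \(|\tau|\) are comparable for large frequencies: if \(|\tau-a\xi|\) is small, then \(|\tau|\approx|a||\xi|\), so \(\langle\tau\rangle\asymp_a\langle\xi\rangle\) (recall \(a\neq0\) since \(a\) is irrational). Precisely, when \(|\tau-a\xi|<1\), we get \(\langle\tau\rangle\asymp\langle\xi\rangle\) and \(\langle\tau\rangle^{-r}\langle\xi\rangle\asymp\langle\xi\rangle^{1-r}\), so \eqref{eq:diophantine-lower-bound} applies. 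When \(|\tau-a\xi|\geq1\), we use \(\langle\tau\rangle^{-r}\langle\xi\rangle\leq\langle\xi\rangle\), and need only handle the case \(\tau=0\), \(\xi\neq0\), where \(|a\xi|\gtrsim\langle\xi\rangle\) directly.
\end{itemize}
In the last subcase, \(\tau\neq0\) with \(|\tau-a\xi|\geq1\) and \(\langle\tau\rangle\lesssim\langle\xi\rangle\), I still need \(\langle\xi\rangle\langle\tau\rangle^{-r}\lesssim|\tau-a\xi|\). If \(|\tau|\leq |a\xi|/2\), then \(|\tau-a\xi|\gtrsim\langle\xi\rangle\) and we are done. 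If instead \(|\tau|>|a\xi|/2\), then \(\langle\tau\rangle\asymp\langle\xi\rangle\), so the right side is \(\lesssim\langle\xi\rangle^{1-r}\leq1\leq|\tau-a\xi|\).

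The main obstacle is this \(\theta=-r\) endpoint. It must be organized by a three-way split according to the relative sizes of \(|\tau|\) and \(|a\xi|\): \(\tau\) dominant, \(\xi\) dominant, or the two comparable. Only the comparable region uses the arithmetic bound \eqref{eq:diophantine-lower-bound}; the other two regions use the trivial lower bound \(|\tau-a\xi|\gtrsim\max\{|\tau|,|\xi|\}\) there. With both endpoints established, the monotonicity in \(\theta\) gives (ii) on all of \([-r,1]\) with a uniform constant.
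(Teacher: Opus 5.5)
\textbf{Gap in part (ii): the range $0<\theta\le1$ is never proved.}

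The interval in \eqref{diof_ineq_real} is $[-r,1]$, so its endpoints are $\theta=-r$ and $\theta=1$. You prove the bound only at $\theta=-r$ and at $\theta=0$. Your monotonicity device, applied to these two values, yields the estimate on $[-r,0]$ and nothing more.

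The missing range is not implied by what you have. Take $(\tau,\xi)=(N,0)$. At $\theta=1$ the right-hand side is $\langle N\rangle$. Your two estimates give only
\[
|\tau-a\xi|\gtrsim\max\{1,\langle N\rangle^{-r}\}=1
\]
at this point. More generally, whenever $\langle\tau\rangle\gg\langle\xi\rangle$, the factor $(\langle\tau\rangle/\langle\xi\rangle)^\theta$ increases with $\theta$. So every $\theta>0$ is strictly stronger than anything you established. As written, the argument proves (ii) only on $[-r,0]$.

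\textbf{The repair.} What is missing is the endpoint $\theta=1$, namely $|\tau-a\xi|\gtrsim_{a,r}\langle\tau\rangle\langle\xi\rangle^{-r}$. This is the first thing the paper proves.
\begin{enumerate}
\item If $|\tau|\ge2|a||\xi|$, then $\tau\neq0$ and $|\tau-a\xi|\ge|\tau|/2\gtrsim\langle\tau\rangle\ge\langle\tau\rangle\langle\xi\rangle^{-r}$.
\item Otherwise $\xi\neq0$ and $\langle\tau\rangle\lesssim_a\langle\xi\rangle$. Then $\langle\tau\rangle\langle\xi\rangle^{-r}\lesssim_a\langle\xi\rangle^{1-r}\lesssim_{a,r}|\tau-a\xi|$ by \eqref{eq:diophantine-lower-bound}.
\end{enumerate}
You already have both ingredients: your first case for $\theta=-r$ and your $\theta=0$ bound. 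Once $\theta=1$ and $\theta=-r$ are in hand, your monotonicity argument gives the whole interval with a uniform constant. It is equivalent to the paper's weighted geometric mean. The point $\theta=0$ is then interior and needs no separate treatment.

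\textbf{The rest is correct.} Part (i) and your $\theta=-r$ argument are both valid. Your case analysis for $\theta=-r$ is longer than it needs to be because you split according to whether $|\tau|$ dominates $|\xi|$. The paper splits the other way. If $|a||\xi|\ge2|\tau|$, the bound is immediate. Otherwise either $\xi=0$, which is trivial, or $\langle\xi\rangle\lesssim_a\langle\tau\rangle$. In the latter case $1-r<0$ gives $\langle\xi\rangle^{1-r}\gtrsim\langle\tau\rangle^{1-r}\gtrsim\langle\tau\rangle^{-r}\langle\xi\rangle$ directly from the Diophantine bound.
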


\begin{proof}
	Assume first that \(b\neq0\). The real-linear map
	\(T(\tau,\xi)\coloneq(\tau-a\xi,-b\xi)\) is invertible. Hence
	\[
	|\tau-c\xi|
	=
	|T(\tau,\xi)|
	\gtrsim_{a,b}
	(\tau^2+\xi^2)^{1/2}.
	\]
	
	For
	\((\tau,\xi)\in\mathbb Z^2\setminus\{(0,0)\}\), it follows that
	\[
	|\tau-c\xi|
	\gtrsim_{a,b}\langle\tau\rangle,
	\qquad
	|\tau-c\xi|
	\gtrsim_{a,b}\langle\xi\rangle.
	\]
	
	Thus, for \(0\leq\theta\leq1\),
	\[
	|\tau-c\xi|
	\geq
	C_1^\theta C_2^{1-\theta}
	\langle\tau\rangle^\theta
	\langle\xi\rangle^{1-\theta}
	\geq
	\min\{C_1,C_2\}
	\langle\tau\rangle^\theta
	\langle\xi\rangle^{1-\theta}.
	\]
	This proves \emph{(i)}, with an implicit constant independent of
	\(\theta\).
	
	We now prove \emph{(ii)}. Fix \(r>\mu(a)\geq2\). We first consider
	the endpoint \(\theta=1\) in \eqref{diof_ineq_real}. Let
	\((\tau,\xi)\in\mathbb Z^2\setminus\{(0,0)\}\).
	If 	\(|\tau|\geq2|a|\,|\xi|\),
	then \(\tau\neq0\) and
	\[
	|\tau-a\xi|
	\geq
	\frac{|\tau|}{2}
	\gtrsim
	\langle\tau\rangle
	\geq
	\langle\tau\rangle\langle\xi\rangle^{-r}.
	\]
	
	If \(|\tau|<2|a|\,|\xi|\), then \(\xi\neq0\) and
	\(\langle\tau\rangle\lesssim_a\langle\xi\rangle\). Hence, by
	\eqref{eq:diophantine-lower-bound},
	\[
	\langle\tau\rangle\langle\xi\rangle^{-r}
	\lesssim_a
	\langle\xi\rangle^{1-r}
	\lesssim_{a,r}
	|\tau-a\xi|.
	\]
	
	Combining the two cases, we obtain
	\begin{equation}\label{eq:real-endpoint-theta-one}
		|\tau-a\xi|
		\gtrsim_{a,r}
		\langle\tau\rangle\langle\xi\rangle^{-r}.
	\end{equation}
	
	We next consider the other endpoint, \(\theta=-r\). If \(|a|\,|\xi|\geq2|\tau|\),
	then \(\xi\neq0\) and
	\[
	|\tau-a\xi|
	\geq
	\frac{|a||\xi|}{2}
	\gtrsim_a
	\langle\tau\rangle^{-r}\langle\xi\rangle.
	\]
	
	If \(|a|\,|\xi|<2|\tau|\), then either \(\xi=0\), in which case
	\[
	\langle\tau\rangle^{-r}\langle\xi\rangle
	=
	\langle\tau\rangle^{-r}
	\leq
	|\tau|
	=
	|\tau-a\xi|,
	\]
	or \(\xi\neq0\), in which case
	\(\langle\xi\rangle\lesssim_a\langle\tau\rangle\). Since
	\(r>1\), \eqref{eq:diophantine-lower-bound} gives
	\[
	|\tau-a\xi|
	\gtrsim_{a,r}
	\langle\xi\rangle^{1-r}
	\gtrsim_{a,r}
	\langle\tau\rangle^{1-r}
	\gtrsim_a
	\langle\tau\rangle^{-r}\langle\xi\rangle.
	\]
	
	Thus
	\begin{equation}\label{eq:real-endpoint-theta-minus-r}
		|\tau-a\xi|
		\gtrsim_{a,r}
		\langle\tau\rangle^{-r}\langle\xi\rangle.
	\end{equation}
	
	Finally, let \(\theta\in[-r,1]\) and set
	\[
	\lambda
	\coloneq
	\frac{\theta+r}{r+1}.
	\]
	
	Then \(\lambda\in[0,1], \theta=\lambda-r(1-\lambda)\), and
	\(1-r-\theta=-r\lambda+(1-\lambda)\). 		
	Taking the weighted geometric mean of
	\eqref{eq:real-endpoint-theta-one} and
	\eqref{eq:real-endpoint-theta-minus-r}, we obtain
	\begin{equation*}
		|\tau-a\xi|
		\gtrsim_{a,r}
		\left(
		\langle\tau\rangle\langle\xi\rangle^{-r}
		\right)^\lambda
		\left(
		\langle\tau\rangle^{-r}\langle\xi\rangle
		\right)^{1-\lambda}
		=
		\langle\tau\rangle^\theta
		\langle\xi\rangle^{1-r-\theta}.
	\end{equation*}
	
	As in part~\emph{(i)}, the implicit constant can be chosen
	uniformly for \(\theta\in[-r,1]\).
\end{proof}

%=================================================
\subsection{Partial Fourier reduction for variable coefficients}  \
%=================================================

Let \(a\in C^\infty(\mathbb T;\mathbb R)\), and denote its average by
\[
a_0
\coloneq
\frac{1}{2\pi}\int_0^{2\pi}a(t)\,dt.
\]

Taking the Fourier transform with respect to the \(x\)-variable, the
equation
\[
Lu=f,
\qquad
L=\partial_t-a(t)\partial_x,
\]
becomes
\begin{equation}\label{eq:variable-partial-Fourier-equation}
	\partial_t\widehat u(t,\xi)
	-i\xi a(t)\widehat u(t,\xi)
	=
	\widehat f(t,\xi),
	\qquad
	\xi\in\mathbb Z.
\end{equation}

For \(\xi=0\), equation
\eqref{eq:variable-partial-Fourier-equation} reduces to
\(\partial_t\widehat u(t,0)=\widehat f(t,0)\). 
Since \(\widehat u(\cdot,0)\) is periodic, necessarily
\[
\int_0^{2\pi}\widehat f(t,0)\,dt=0,
\]
or, equivalently, \(\widehat f(0,0)=0\).

Under this compatibility condition, the zero-mode solutions are given by
\[
\widehat u(t,0)
=
\lambda+\int_0^t\widehat f(s,0)\,ds,
\qquad
\lambda\in\mathbb C.
\]

For the nonzero Fourier modes, we use the following classical
representation formulas.

\begin{lemma}\label{lem:periodic-mode-equation}
	Let \(\xi\in\mathbb Z\setminus\{0\}\) and
	\(g\in L^1(\mathbb T)\). If \(\xi a_0\notin\mathbb Z\), 
	then the equation
	\begin{equation}\label{eq:abstract-periodic-mode}
		v'(t)-i\xi a(t)v(t)=g(t),
		\qquad t\in\mathbb T,
	\end{equation}
	has a unique periodic distributional solution. This solution has an
	absolutely continuous periodic representative and is given by
	\begin{equation}\label{eq:general-variable-solution-backward}
		v(t)
		=
		\frac{1}{1-e^{2\pi i\xi a_0}}
		\int_0^{2\pi}
		\exp\left(
		i\xi\int_{t-s}^{t}a(r)\,dr
		\right)
		g(t-s)\,ds,
	\end{equation}
	or, equivalently, by
	\begin{equation}\label{eq:general-variable-solution-forward}
		v(t)
		=
		\frac{1}{e^{-2\pi i\xi a_0}-1}
		\int_0^{2\pi}
		\exp\left(
		-i\xi\int_t^{t+s}a(r)\,dr
		\right)
		g(t+s)\,ds.
	\end{equation}
\end{lemma}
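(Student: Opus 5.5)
The plan is to solve the ODE by an integrating factor and then pin down the periodic solution through the monodromy condition. Set
\[
\Phi(t)\coloneq\int_0^t a(r)\,dr,
\]
so that \(\Phi(t+2\pi)=\Phi(t)+2\pi a_0\). For a distributional solution \(v\) of \eqref{eq:abstract-periodic-mode}, regarded as a \(2\pi\)-periodic distribution on \(\mathbb R\), put \(w(t)\coloneq e^{-i\xi\Phi(t)}v(t)\). Multiplication by the smooth function \(e^{-i\xi\Phi}\) is legitimate for distributions, and it gives
\[
w'=e^{-i\xi\Phi}g\in L^1_{\mathrm{loc}}(\mathbb R).
\]

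\textbf{Uniqueness and regularity.} A distribution whose derivative is \(L^1_{\mathrm{loc}}\) equals an absolutely continuous function plus a constant. Hence every solution \(v\) has an absolutely continuous representative, of the form
\[
v(t)=e^{i\xi\Phi(t)}\left(C+\int_0^t e^{-i\xi\Phi(s)}g(s)\,ds\right).
\]
The difference of two periodic solutions solves the homogeneous equation, so it has the form \(Ce^{i\xi\Phi(t)}\). Periodicity then forces \(Ce^{2\pi i\xi a_0}=C\), and since \(\xi a_0\notin\mathbb Z\) this gives \(C=0\). This settles uniqueness.

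\textbf{Existence and the backward formula \eqref{eq:general-variable-solution-backward}.} I impose \(v(2\pi)=v(0)\) on the general solution. This reads
\[
e^{2\pi i\xi a_0}\bigl(C+I\bigr)=C,
\qquad
I=\int_0^{2\pi}e^{-i\xi\Phi(s)}g(s)\,ds,
\]
so \(C=e^{2\pi i\xi a_0}I/(1-e^{2\pi i\xi a_0})\). The resulting function is periodic, since both it and its shift by \(2\pi\) solve the same first-order ODE with equal values at \(t=0\); it is therefore the periodic solution. Substituting \(C\) back and combining the two integrals gives
\[
v(t)=\frac{1}{1-e^{2\pi i\xi a_0}}\left(\int_0^t e^{i\xi(\Phi(t)-\Phi(s))}g(s)\,ds+\int_t^{2\pi} e^{i\xi(\Phi(t)-\Phi(s)+2\pi a_0)}g(s)\,ds\right).
\]
To reach \eqref{eq:general-variable-solution-backward}, I change variables \(s\mapsto t-s\) in the first integral. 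In the second integral I use periodicity of \(g\) together with \(\Phi(s)-2\pi a_0=\Phi(s-2\pi)\), which writes that piece as an integral over \([t-2\pi,0]\) with kernel \(e^{i\xi(\Phi(t)-\Phi(s'))}\). The two pieces then merge into a single integral over \(s'\in[t-2\pi,t]\), and the substitution \(s=t-s'\) produces exactly the exponent \(i\xi\int_{t-s}^t a\).

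\textbf{The forward formula \eqref{eq:general-variable-solution-forward}.} The same argument works, integrating over \([t,t+2\pi]\) instead. The shorter route is to substitute \(s\mapsto 2\pi-s\) in \eqref{eq:general-variable-solution-backward}, which gives \(g(t+s')\) with exponent
\[
i\xi\int_{t+s'-2\pi}^{t}a=i\xi\Bigl(2\pi a_0-\int_t^{t+s'}a\Bigr).
\]
Multiplying numerator and denominator by \(e^{-2\pi i\xi a_0}\) then turns the prefactor into \(1/(e^{-2\pi i\xi a_0}-1)\).

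I do not expect any deep obstacle. The only delicate step is the bookkeeping of periodic shifts, namely \(\Phi(s+2\pi)=\Phi(s)+2\pi a_0\), when merging the two integrals, together with the justification that a distribution with an \(L^1_{\mathrm{loc}}\) derivative is absolutely continuous.
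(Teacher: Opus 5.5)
Your proposal is correct and follows the same route as the paper. The paper's proof says only that multiplying by the integrating factor and imposing periodicity yields both formulas, with absolute continuity coming from $g\in L^1(\mathbb T)$ and uniqueness from $\xi a_0\notin\mathbb Z$. You fill in the details it leaves implicit, namely the monodromy computation of the constant, the merging of the two integrals via $\Phi(s+2\pi)=\Phi(s)+2\pi a_0$, and the substitution $s\mapsto 2\pi-s$ for the forward formula, and all of these steps check out.
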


The proof is immediate: multiplying
\eqref{eq:abstract-periodic-mode} by the integrating factor
and imposing periodicity yields
\eqref{eq:general-variable-solution-backward} and
\eqref{eq:general-variable-solution-forward}. Since
\(g\in L^1(\mathbb T)\), the resulting solution has an absolutely
continuous periodic representative, and the condition
\(\xi a_0\notin\mathbb Z\) guarantees uniqueness.

When \(f\in H_{\operatorname{mix}}^{(k_1,k_2)}(\mathbb T^2)\),
with \(k_1\geq0\), 	
\eqref{eq:mixed-partial-characterization} implies that
\[
\widehat f(\cdot,\xi)
\in H^{k_1}(\mathbb T)
\subset L^2(\mathbb T)
\subset L^1(\mathbb T),
\qquad
\xi\in\mathbb Z.
\]

Thus the preceding representation formulas apply directly in this
range. Estimates for arbitrary real Sobolev orders will later be
obtained by interpolation and duality.

For \(\xi\neq0\), the regularity problem is therefore reduced to
estimating the small-divisor factors
\[
\left|1-e^{\pm2\pi i\xi a_0}\right|^{-1}.
\]
The required estimates are established in the next subsection.

%=================================================
\subsection{Small divisors for real-valued coefficients} \
%=================================================

Since \(a\) is real-valued, the exponential factors in
\eqref{eq:general-variable-solution-backward} and
\eqref{eq:general-variable-solution-forward} have absolute value one. Hence
the relevant small divisors are
\(1-e^{\pm2\pi i\xi a_0}\).

If \(a_0=p_0/q_0\in\mathbb Q\) is written in lowest terms, the
nonzero resonant modes are precisely
\[
\xi=mq_0,
\qquad
m\in\mathbb Z\setminus\{0\}.
\]

If \(a_0\notin\mathbb Q\), all nonzero modes are nonresonant, but the
corresponding denominators may become arbitrarily small as
\(|\xi|\to\infty\). Their decay is controlled by the Diophantine
properties of \(a_0\).

\begin{lemma}\label{lemma_exp_diof}
	For every \(y\in\mathbb R\),
	\begin{equation}\label{eq:exponential-distance}
		4\operatorname{dist}(y,\mathbb Z)
		\leq
		|1-e^{\pm2\pi iy}|
		\leq
		2\pi\operatorname{dist}(y,\mathbb Z),
	\end{equation}
	where 
	\(\operatorname{dist}(y,\mathbb Z)
	\coloneq  \inf_{\ell\in\mathbb Z}|y-\ell|\).
	
	Consequently, if
	\(a_0\in\mathbb R\setminus\mathbb Q\) has finite irrationality
	measure, then, for every \(r>\mu(a_0)\),
	\begin{equation}\label{eq:exponential-diophantine-bound}
		|1-e^{\pm2\pi i\xi a_0}|
		\gtrsim_{a_0,r}
		\langle\xi\rangle^{1-r},
		\qquad
		\xi\in\mathbb Z\setminus\{0\}.
	\end{equation}
\end{lemma}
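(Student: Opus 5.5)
The plan is to reduce everything to the elementary identity
\[
|1-e^{2\pi i y}|=|e^{-\pi i y}-e^{\pi i y}|=2|\sin(\pi y)|,
\]
which holds for both signs because \(|1-e^{-2\pi iy}|=|\overline{1-e^{2\pi iy}}|\). The function \(y\mapsto|\sin(\pi y)|\) is \(1\)-periodic and even about every integer. We may therefore replace \(y\) by \(\delta\coloneq\operatorname{dist}(y,\mathbb Z)\in[0,1/2]\), which gives \(|1-e^{\pm2\pi iy}|=2\sin(\pi\delta)\). Inequality \eqref{eq:exponential-distance} then amounts to
\[
2\delta\leq\sin(\pi\delta)\leq\pi\delta,\qquad 0\leq\delta\leq\tfrac12 .
\]
The upper bound is \(\sin x\leq x\) with \(x=\pi\delta\). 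The lower bound is Jordan's inequality \(\sin x\geq \frac{2}{\pi}x\) on \([0,\pi/2]\), which follows from concavity of \(\sin\) there: the chord joining \((0,0)\) and \((\pi/2,1)\) lies below the graph. Multiplying by \(2\) yields the constants \(4\) and \(2\pi\).

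For \eqref{eq:exponential-diophantine-bound}, fix \(\xi\neq0\) and choose \(\tau\in\mathbb Z\) nearest to \(\xi a_0\). Then \(\operatorname{dist}(\xi a_0,\mathbb Z)=|\tau-a_0\xi|\). Applying the lower bound in \eqref{eq:exponential-distance} and then \eqref{eq:diophantine-lower-bound} with \(a=a_0\) gives
\[
|1-e^{\pm2\pi i\xi a_0}|\geq 4|\tau-a_0\xi|\geq 4C_{a_0,r}\langle\xi\rangle^{1-r}
\]
for every \(r>\mu(a_0)\), which is the claim.

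There is no real obstacle here. The only points needing care are that the reduction to \(\delta\in[0,1/2]\) relies on both periodicity and evenness of \(|\sin(\pi y)|\), and that in the Diophantine step \(\tau\) is chosen depending on \(\xi\). That dependence is harmless, since \eqref{eq:diophantine-lower-bound} holds uniformly over all \(\tau\in\mathbb Z\).
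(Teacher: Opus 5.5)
Your proposal is correct and follows the paper's proof essentially step for step. You reduce to \(2\sin(\pi\delta)\) with \(\delta=\operatorname{dist}(y,\mathbb Z)\in[0,1/2]\) and use \(2\delta\leq\sin(\pi\delta)\leq\pi\delta\). You then take \(\tau\) nearest to \(\xi a_0\) and combine the lower bound with \eqref{eq:diophantine-lower-bound}.
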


\begin{proof}
	Let \(d=\operatorname{dist}(y,\mathbb Z)\). Since
	\(0\leq d\leq1/2\), then 
	\(|1-e^{\pm2\pi iy}| = 2\sin(\pi d)\). Moreover
	\(2d\leq\sin(\pi d)\leq\pi d\), which proves
	\eqref{eq:exponential-distance}.
	
	For \(\xi\neq0\), choose \(\tau\in\mathbb Z\) such that
	\(|\tau-a_0\xi| = \operatorname{dist}(a_0\xi,\mathbb Z)\).
	Then \eqref{eq:diophantine-lower-bound} and
	\eqref{eq:exponential-distance} yield
	\eqref{eq:exponential-diophantine-bound}.
\end{proof}

%==========================================
%==========================================
\section{Constant-Coefficient Vector Fields}
\label{sec:constant-coefficients}
%==========================================
%==========================================

We now consider the constant-coefficient vector field
\[
L\coloneq \partial_t-c\,\partial_x,
\quad
c=a+ib,\quad a,b\in\mathbb R,
\]
and study the regularity and solvability of \(Lu=f\) in Sobolev
spaces of dominating mixed smoothness.

%=================================================
\subsection{Mixed regularity and solvability} \
%=================================================

The following theorem gives the basic mixed smoothness estimates. In
the nonreal case, a gain of one derivative can be distributed between
the two variables, whereas in the real irrational case the
corresponding net shift is \(1-r\), for every \(r>\mu(a)\).

\begin{theorem}\label{theo_regularity_cte}
	Let \(k_1,k_2\in\mathbb R\), and suppose that
	\(u\in\mathcal D'(\mathbb T^2)\) is a distributional solution of
	\[
	Lu=f,
	\qquad
	f\in
	H_{\operatorname{mix}}^{(k_1,k_2)}(\mathbb T^2).
	\]
	Then the following statements hold.
	\begin{enumerate}
		\item[(i)]
		If \(b\neq0\), then, for every \(\theta\in[0,1]\),
		\begin{equation*}
			u\in
			H_{\operatorname{mix}}^{
				(k_1+\theta,k_2+1-\theta)
			}(\mathbb T^2).
		\end{equation*}
		Moreover,
		\begin{equation*}
			\left\|
			u-\widehat u(0,0)
			\right\|_{
				H_{\operatorname{mix}}^{
					(k_1+\theta,k_2+1-\theta)
				}(\mathbb T^2)
			}
			\lesssim_{a,b}
			\|f\|_{
				H_{\operatorname{mix}}^{(k_1,k_2)}
				(\mathbb T^2)
			},
		\end{equation*}
		with an implicit constant independent of \(\theta\).
		
		\item[(ii)]
		If \(b=0\) and
		\(a\in\mathbb R\setminus\mathbb Q\) has finite irrationality
		measure \(\mu(a)\), then, for every \(r>\mu(a)\) and every
		\(\theta\in[-r,1]\),
		\begin{equation*}
			u\in
			H_{\operatorname{mix}}^{
				(k_1+\theta,k_2+1-r-\theta)
			}(\mathbb T^2).
		\end{equation*}
		Moreover,
		\begin{equation*}
			\left\|
			u-\widehat u(0,0)
			\right\|_{
				H_{\operatorname{mix}}^{
					(k_1+\theta,k_2+1-r-\theta)
				}(\mathbb T^2)
			}
			\lesssim_{a,r}
			\|f\|_{
				H_{\operatorname{mix}}^{(k_1,k_2)}
				(\mathbb T^2)
			},
		\end{equation*}
		with an implicit constant independent of \(\theta\).
	\end{enumerate}
\end{theorem}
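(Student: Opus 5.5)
The plan is to work entirely on the Fourier side, using the reduction \eqref{eq:Fourier-equation-constant} together with the symbol bounds of Proposition~\ref{prop:mixed-symbol-lower-bounds}.

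First we determine the resonance set. In both cases, \(b\neq0\) or \(c=a\in\mathbb R\setminus\mathbb Q\), the set \(\mathcal R_c\) reduces to \(\{(0,0)\}\). Hence, for every distributional solution \(u\) and every \((\tau,\xi)\neq(0,0)\), the coefficients are uniquely determined by
\[
\widehat u(\tau,\xi)=\frac{\widehat f(\tau,\xi)}{i(\tau-c\xi)},
\]
while \(\widehat u(0,0)\) is arbitrary. Compatibility forces \(\widehat f(0,0)=0\). The function \(u-\widehat u(0,0)\) has the same coefficients as \(u\) off the origin and vanishes there.

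For (i), we fix \(\theta\in[0,1]\) and apply \eqref{diof_ineq_imaginary}, whose constant \(C=C_{a,b}\) does not depend on \(\theta\). Off the origin this gives
\[
\langle\tau\rangle^{2(k_1+\theta)}\langle\xi\rangle^{2(k_2+1-\theta)}|\widehat u(\tau,\xi)|^2
\le C^{-2}\langle\tau\rangle^{2k_1}\langle\xi\rangle^{2k_2}|\widehat f(\tau,\xi)|^2 .
\]
Summing over \((\tau,\xi)\neq(0,0)\) gives the norm estimate for \(u-\widehat u(0,0)\), with a constant uniform in \(\theta\).

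Membership of \(u\) itself in the space follows because the constant function \(\widehat u(0,0)\) belongs to every \(H_{\operatorname{mix}}^{(s_1,s_2)}(\mathbb T^2)\).

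Part (ii) is identical. We replace \eqref{diof_ineq_imaginary} by \eqref{diof_ineq_real}, which for fixed \(r>\mu(a)\) holds uniformly in \(\theta\in[-r,1]\), and this yields the target orders \((k_1+\theta,\,k_2+1-r-\theta)\).

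There is no serious obstacle. The only point requiring care is to justify rigorously that an arbitrary distributional solution \(u\), not merely the constructed one, has coefficients given by the formula above. This follows from the fact that Fourier coefficients of distributions are well defined and that \(\widehat{Lu}(\tau,\xi)=i(\tau-c\xi)\widehat u(\tau,\xi)\).
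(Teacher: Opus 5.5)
Your proposal is correct and follows the paper's argument essentially step for step. You use the trivial resonance set to obtain the coefficient formula off the origin, apply the $\theta$-uniform symbol bounds of Proposition~\ref{prop:mixed-symbol-lower-bounds} pointwise, and then square and sum; your remark that the constant $\widehat u(0,0)$ lies in every mixed space makes explicit a step the paper leaves implicit.
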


\begin{proof}
	Set \(v\coloneq u-\widehat u(0,0)\). Under either hypothesis, the
	resonance set is \(\{(0,0)\}\), so that
	\(\widehat f(0,0)=0\) and, for
	\((\tau,\xi)\neq(0,0)\),
	\[
	\widehat v(\tau,\xi)
	=
	\frac{\widehat f(\tau,\xi)}
	{i(\tau-c\xi)}.
	\]
	
	Set
	\[
	\beta\coloneq
	\begin{cases}
		1-\theta,
		& b\neq0,\quad 0\leq\theta\leq1,\\
		1-r-\theta,
		& b=0,\quad r>\mu(a),\quad -r\leq\theta\leq1.
	\end{cases}
	\]
	By Proposition~\ref{prop:mixed-symbol-lower-bounds},
	\[
	\langle\tau\rangle^{k_1+\theta}
	\langle\xi\rangle^{k_2+\beta}
	|\widehat v(\tau,\xi)|
	\lesssim
	\langle\tau\rangle^{k_1}
	\langle\xi\rangle^{k_2}
	|\widehat f(\tau,\xi)|,
	\]
	with an implicit constant uniform in \(\theta\) in the indicated
	intervals. Squaring and summing over \(\mathbb Z^2\) gives the
	asserted estimates. Since
	\(\beta=1-\theta\) in case~\emph{(i)} and
	\(\beta=1-r-\theta\) in case~\emph{(ii)}, the corresponding
	regularity statements follow.
\end{proof}

\begin{remark}
	If \(a\in\mathbb R\setminus\mathbb Q\) has finite irrationality
	measure, then the finite Sobolev loss in
	Theorem~\ref{theo_regularity_cte} is absorbed by smooth data. Hence
	\[
	f\in C^\infty(\mathbb T^2)
	\quad\Longrightarrow\quad
	u\in C^\infty(\mathbb T^2).
	\]
	
	At the qualitative \(C^\infty\)-level, this recovers the classical
	global hypoellipticity result for constant vector fields on the
	torus; see \cite{GW1972_pams,GW1973_top}. The quantitative Sobolev 
	estimates above refine this qualitative statement by identifying the 
	finite loss in terms of the irrationality measure \(\mu(a)\).
\end{remark}

\begin{corollary}\label{coro_existence_cte}
	Let \(k_1,k_2\in\mathbb R\), and let
	\(f\in H_{\operatorname{mix}}^{(k_1,k_2)}(\mathbb T^2)\) 
	satisfy \(\widehat f(0,0)=0\).
	Then the following statements hold.
	\begin{enumerate}
		\item[(i)]
		If \(b\neq0\), there exists a unique distributional solution
		\(u\) of \(Lu=f\) satisfying \(\widehat u(0,0)=0\).
		Moreover, for every \(\theta\in[0,1]\),
		\[
		u\in
		H_{\operatorname{mix}}^{
			(k_1+\theta,k_2+1-\theta)
		}(\mathbb T^2).
		\]
		
		\item[(ii)]
		If \(b=0\) and
		\(a\in\mathbb R\setminus\mathbb Q\) has finite irrationality
		measure \(\mu(a)\), there exists a unique distributional solution
		\(u\) of \(Lu=f\) satisfying \(\widehat u(0,0)=0\).
		Moreover, for every \(r>\mu(a)\) and every
		\(\theta\in[-r,1]\),
		\[
		u\in
		H_{\operatorname{mix}}^{
			(k_1+\theta,k_2+1-r-\theta)
		}(\mathbb T^2).
		\]
	\end{enumerate}
\end{corollary}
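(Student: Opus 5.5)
We will prove existence by writing down the solution explicitly on the Fourier side, and obtain uniqueness and regularity from the Fourier reduction \eqref{eq:Fourier-equation-constant} together with Theorem~\ref{theo_regularity_cte}. Under either hypothesis of the corollary, the resonance set satisfies \(\mathcal R_c=\{(0,0)\}\), since \(b\neq0\) or \(c=a\in\mathbb R\setminus\mathbb Q\). We therefore define the candidate solution by
\[
\widehat u(0,0)\coloneq 0,
\qquad
\widehat u(\tau,\xi)\coloneq \frac{\widehat f(\tau,\xi)}{i(\tau-c\xi)},
\quad (\tau,\xi)\neq(0,0).
\]

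The first step is to check that this sequence defines a distribution. We apply Proposition~\ref{prop:mixed-symbol-lower-bounds} with a fixed admissible \(\theta\): \(\theta=0\) in case (i), and \(\theta=0\) with some fixed \(r>\mu(a)\) in case (ii). This gives
\[
|\widehat u(\tau,\xi)|\lesssim \langle\xi\rangle^{r-1}|\widehat f(\tau,\xi)|
\]
in case (ii), and the analogous bound with gain in case (i). Since \(f\in H_{\operatorname{mix}}^{(k_1,k_2)}(\mathbb T^2)\), its coefficients have at most polynomial growth. The coefficients of \(u\) therefore also have at most polynomial growth, so \(u\in\mathcal D'(\mathbb T^2)\) by the Fourier characterization of distributions recalled in Section~\ref{sec:preliminaries}.

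The second step is to verify the equation. By construction \(i(\tau-c\xi)\widehat u(\tau,\xi)=\widehat f(\tau,\xi)\) for \((\tau,\xi)\neq(0,0)\). At \((0,0)\) both sides vanish, because \(\widehat f(0,0)=0\) by hypothesis. Hence \(Lu=f\) in \(\mathcal D'(\mathbb T^2)\).

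For uniqueness, suppose \(u_1\) and \(u_2\) are two solutions with vanishing \((0,0)\) coefficient, and set \(w=u_1-u_2\). Then \(i(\tau-c\xi)\widehat w(\tau,\xi)=0\) for all \((\tau,\xi)\). Since \(\tau-c\xi\neq0\) off \(\mathcal R_c=\{(0,0)\}\), all coefficients of \(w\) vanish, and so \(w=0\).

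Finally, the regularity statements follow by applying Theorem~\ref{theo_regularity_cte} to this \(u\). Because \(\widehat u(0,0)=0\), we have \(u-\widehat u(0,0)=u\), so the estimates of the theorem give the stated memberships for every \(\theta\) in the indicated ranges. There is no real obstacle here; the only point requiring care is that polynomial growth of \(\widehat u\) must be established before \(u\) is known to be a distribution, and the symbol lower bounds from Proposition~\ref{prop:mixed-symbol-lower-bounds} supply exactly this.
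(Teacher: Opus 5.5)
Your proposal is correct and follows essentially the same route as the paper. You define \(\widehat u\) by division off the origin, use the lower bounds of Proposition~\ref{prop:mixed-symbol-lower-bounds} to see that \(u\) is a distribution solving \(Lu=f\), and get uniqueness from the Fourier equation because \(\mathcal R_c=\{(0,0)\}\). The only difference is cosmetic: you first check polynomial growth and then apply Theorem~\ref{theo_regularity_cte} to the resulting solution, whereas the paper reads off membership in the mixed spaces (and hence in \(\mathcal D'(\mathbb T^2)\)) directly from the estimates in that theorem's proof.
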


\begin{proof}
	Set \(\widehat u(0,0)=0\) and, for
	\((\tau,\xi)\neq(0,0)\), define
	\[
	\widehat u(\tau,\xi)
	=
	\frac{\widehat f(\tau,\xi)}
	{i(\tau-c\xi)}.
	\]
	Under either hypothesis, the denominator does not vanish away from
	the origin. The estimates used in the proof of
	Theorem~\ref{theo_regularity_cte} show that \(u\) belongs to all
	the spaces asserted above, and hence defines a distributional
	solution of \(Lu=f\).
	
	If \(v\) is another distributional solution satisfying
	\(\widehat v(0,0)=0\), then the Fourier equation implies
	\(\widehat u(\tau,\xi)=\widehat v(\tau,\xi)\) for every
	\((\tau,\xi)\neq(0,0)\). Since
	\(\widehat u(0,0)=\widehat v(0,0)=0\), we conclude that \(u=v\).
\end{proof}

\begin{remark}\label{rem:admissible-mixed-region-constant}
	In the real irrational case, the estimates of
	Theorem~\ref{theo_regularity_cte} admit a simple geometric
	interpretation. For each \(r>\mu(a)\), the regularity indices
	\((s_1,s_2)\) lie on the segment
	\[
	s_1+s_2=k_1+k_2+1-r,
	\qquad
	k_1-r\leq s_1\leq k_1+1.
	\]
	
	By monotonicity of the mixed smoothness scale and by taking the
	union over \(r>\mu(a)\), every pair satisfying
	\[
	s_1\leq k_1+1,\qquad
	s_2\leq k_2+1,\qquad
	s_1+s_2<k_1+k_2+1-\mu(a)
	\]
	is admissible.
	
	The resulting region is illustrated below.
	
	\begin{figure}[ht]
		\centering
		\begin{tikzpicture}[scale=0.75, >=Latex]
			
			%-------------------------------------------------
			% Parameters (edit these to adjust the picture)
			%-------------------------------------------------
			\pgfmathsetmacro{\kone}{4.0}
			\pgfmathsetmacro{\ktwo}{3.2}
			\pgfmathsetmacro{\mua}{2.0}
			
			\pgfmathsetmacro{\xmin}{-0.7}
			\pgfmathsetmacro{\xmax}{6.0}
			\pgfmathsetmacro{\ymin}{-0.7}
			\pgfmathsetmacro{\ymax}{5.0}
			
			% Key points
			\pgfmathsetmacro{\xA}{\kone-\mua}
			\pgfmathsetmacro{\yA}{\ktwo+1}
			\pgfmathsetmacro{\xB}{\kone+1}
			\pgfmathsetmacro{\yB}{\ktwo-\mua}
			
			%-------------------------------------------------
			% Shaded region:
			% s_1 <= k_1+1, s_2 <= k_2+1, s_1+s_2 < k_1+k_2+1-\mu(a)
			%-------------------------------------------------
			\fill[gray!20]
			(\xmin,\ymin) --
			(\xmin,\yA) --
			(\xA,\yA) --
			(\xB,\yB) --
			(\xB,\ymin) -- cycle;
			
			% Axes
			% \draw[thick] (\xmin,0) -- (\xmax,0);
			% \draw[thick] (0,\ymin) -- (0,\ymax);
			\draw[->, thick] (\xmin,0) -- (\xmax,0) node[right] {$s_1$};     \draw[->, thick] (0,\ymin) -- (0,\ymax) node[above] {$s_2$};

			% Solid boundary pieces (included parts)
			\draw[black, thick] (\xmin,\yA) -- (\xA,\yA);
			\draw[black, thick] (\xB,\ymin) -- (\xB,\yB);
			
			% Open boundary piece (not included)
			\draw[gray!50!black, dashed, thick] (\xA,\yA) -- (\xB,\yB);
			
			% Optional guide lines
			\draw[gray!50, dashed] (\xA,0) -- (\xA,\yA);
			\draw[gray!50, dashed] (0,\yB) -- (\xB,\yB);
			
			% Marked points
			\draw[black, thick, fill=white] (\xA,\yA) circle (2.5pt);
			\draw[black, thick, fill=white] (\xB,\yB) circle (2.5pt);
			
			\node[black, above right] at (\xA,\yA)
			{\small $\bigl(k_1-\mu(a),\,k_2+1\bigr)$};
			
			\node[black, right] at (\xB,\yB)
			{\small $\bigl(k_1+1,\,k_2-\mu(a)\bigr)$};
			
		\end{tikzpicture}
		\caption{\small
			Admissible mixed regularity region for the solution \(u\) of
			\(Lu=f\), with \(f\in\Hmix{(k_1,k_2)}\), in the real irrational
			constant-coefficient case. The dashed segment corresponds to the
			endpoint \(r=\mu(a)\), which is not included in general.}
	\end{figure}
	The slanted boundary
	\[
	s_1+s_2=k_1+k_2+1-\mu(a)
	\]
	is not included in general. If the Diophantine lower bound holds
	at the endpoint \(r=\mu(a)\), then the corresponding boundary
	segment is included as well.
\end{remark}

%=================================================
\subsection{Sharpness and isotropic comparison} \
%=================================================

The ranges of \(\theta\) in
Proposition~\ref{prop:mixed-symbol-lower-bounds} are optimal for the
corresponding uniform multiplier estimates, as can be seen by
restricting the frequencies to the coordinate axes.

If \(b\neq0\), an estimate of the form
\[
|\tau-c\xi|
\gtrsim_{a,b}
\langle\tau\rangle^\theta
\langle\xi\rangle^{1-\theta}
\]
requires \(\theta\leq1\) by taking \(\xi=0\), and
\(\theta\geq0\) by taking \(\tau=0\). Hence
\(0\leq\theta\leq1\) is optimal, with endpoint estimates
\[
u\in
H_{\operatorname{mix}}^{(k_1+1,k_2)}(\mathbb T^2)
\cap
H_{\operatorname{mix}}^{(k_1,k_2+1)}(\mathbb T^2).
\]

Similarly, in the real irrational case, an estimate of the form
\[
|\tau-a\xi|
\gtrsim_{a,r}
\langle\tau\rangle^\theta
\langle\xi\rangle^{1-r-\theta}
\]
requires \(\theta\leq1\) and \(\theta\geq-r\). Thus
\(-r\leq\theta\leq1\) is optimal for this family of uniform
multiplier estimates, and the endpoints give
\[
u\in
H_{\operatorname{mix}}^{(k_1+1,k_2-r)}(\mathbb T^2)
\cap
H_{\operatorname{mix}}^{(k_1-r,k_2+1)}(\mathbb T^2).
\]

The preceding discussion concerns the optimal range of \(\theta\) for
a fixed value of \(r\). We next show that, in the real irrational case,
the Diophantine threshold is also sharp below \(\mu(a)\). The critical
case \(r=\mu(a)\) depends on finer arithmetic properties, as explained
in Remark~\ref{rem:endpoint-irrationality-measure}.

\begin{proposition}\label{prop:sharpness-real-constant-mixed}
	Let \(a\in\mathbb R\setminus\mathbb Q\) have finite irrationality
	measure, and let \(L=\partial_t-a\partial_x\).
	
	If \(r<\mu(a)\), then, for every \(k_1,k_2\in\mathbb R\), there
	exist
	\[
	f\in
	H_{\operatorname{mix}}^{(k_1,k_2)}(\mathbb T^2),
	\qquad
	u\in\mathcal D'(\mathbb T^2),
	\]
	such that \(Lu=f\) and, for every \(\theta\in\mathbb R\),
	\[
	u\notin
	H_{\operatorname{mix}}^{
		(k_1+\theta,k_2+1-r-\theta)}
	(\mathbb T^2).
	\]
	
	In particular, the conclusion of
	Theorem~\ref{theo_regularity_cte} fails simultaneously for every
	\(\theta\in[-r,1]\).
\end{proposition}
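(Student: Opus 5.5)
We construct \(f\) and \(u\) supported on a lacunary sequence of near-resonant frequencies, chosen from good rational approximations of \(a\). Fix \(r<\mu(a)\) and choose \(\nu\) with \(\max\{r,2\}<\nu<\mu(a)\); we may assume \(r\) close enough to \(\mu(a)\) that \(\nu>2\) is possible, and for smaller \(r\) the conclusion follows from the case of a larger \(r'<\mu(a)\) by monotonicity in the second index. By Definition~\ref{def_complete_irrationality} there are infinitely many \((p_j,q_j)\in\mathbb Z\times\mathbb N\) with \(|a-p_j/q_j|<q_j^{-\nu}\). Passing to a subsequence, we take \(q_j\to\infty\) rapidly, say \(q_{j+1}\geq 2q_j\), so that the frequencies \((p_j,q_j)\) are distinct. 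Put \(\tau_j=p_j\) and \(\xi_j=q_j\). Then
\[
0<|\tau_j-a\xi_j|<\xi_j^{1-\nu}
\qquad\text{and}\qquad
\langle\tau_j\rangle\asymp_a\langle\xi_j\rangle,
\]
since \(p_j/q_j\to a\). Strict positivity holds because \(a\) is irrational.

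Next we define \(\widehat f(\tau_j,\xi_j)=\langle\xi_j\rangle^{-k_1-k_2-\varepsilon}\), with a small \(\varepsilon>0\) to be fixed, and set \(\widehat f=0\) at all other frequencies. Then
\[
\|f\|^2_{H_{\operatorname{mix}}^{(k_1,k_2)}}\asymp\sum_j\langle\xi_j\rangle^{-2\varepsilon}<\infty
\]
by lacunarity. Let \(\widehat u(\tau_j,\xi_j)=\widehat f(\tau_j,\xi_j)/(i(\tau_j-a\xi_j))\) and \(\widehat u=0\) elsewhere. These coefficients grow at most like \(\langle\xi_j\rangle^{-k_1-k_2-\varepsilon+\nu-1}\), which is polynomial, so \(u\in\mathcal D'(\mathbb T^2)\). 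Moreover \(Lu=f\) holds by \eqref{eq:Fourier-equation-constant}.

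For any \(\theta\in\mathbb R\), the weight \(\langle\tau\rangle^{k_1+\theta}\langle\xi\rangle^{k_2+1-r-\theta}\) at \((\tau_j,\xi_j)\) is comparable to \(\langle\xi_j\rangle^{k_1+k_2+1-r}\), with constants depending on \(\theta\) and \(k_1\); this is the point where \(\theta\) drops out. Hence each term of the norm of \(u\) is
\[
\gtrsim_\theta \langle\xi_j\rangle^{2(k_1+k_2+1-r)}\,\langle\xi_j\rangle^{-2(k_1+k_2+\varepsilon)}\,\xi_j^{2(\nu-1)}
\asymp\langle\xi_j\rangle^{2(\nu-r-\varepsilon)}.
\]
Choosing \(\varepsilon<\nu-r\), these terms tend to infinity, so the series diverges. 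Thus \(u\notin H_{\operatorname{mix}}^{(k_1+\theta,k_2+1-r-\theta)}\) for every \(\theta\), simultaneously.

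The only delicate step is the comparability \(\langle\tau_j\rangle\asymp\langle\xi_j\rangle\) with constants depending on \(\theta\). Uniformity in \(\theta\) is not needed, since the claim is only that each individual norm is infinite. We must also confirm that \(\nu>r\) can always be chosen and that the data are exactly in \(H_{\operatorname{mix}}^{(k_1,k_2)}\). Both are immediate. The reduction to \(\nu>2\) is unnecessary anyway: the argument only uses \(\nu>r\) and \(\nu<\mu(a)\).
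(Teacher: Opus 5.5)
There is one step that fails as written: the claim that \(\widehat u(\tau_j,\xi_j)\) grows at most like \(\langle\xi_j\rangle^{-k_1-k_2-\varepsilon+\nu-1}\). The inequality \(|\tau_j-a\xi_j|<\xi_j^{1-\nu}\) bounds the divisor from \emph{above}. It therefore gives the lower bound \(|\widehat u(\tau_j,\xi_j)|>\langle\xi_j\rangle^{-k_1-k_2-\varepsilon}\xi_j^{\nu-1}\), which is what you correctly use later for divergence. It says nothing about how large \(\widehat u\) can be.

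To know that \(u\in\mathcal D'(\mathbb T^2)\) you need a \emph{lower} bound on \(|\tau_j-a\xi_j|\). This is exactly where the hypothesis \(\mu(a)<\infty\) must enter; your argument never uses it anywhere else. By \eqref{eq:diophantine-lower-bound}, for a fixed \(R>\mu(a)\) one has \(|\tau_j-a\xi_j|\geq C_{a,R}\langle\xi_j\rangle^{1-R}\). Hence \(|\widehat u(\tau_j,\xi_j)|\lesssim\langle\xi_j\rangle^{R-1-k_1-k_2-\varepsilon}\), which is polynomial. Without such a bound, dividing prescribed data by the symbol can produce super-polynomial growth, as for Liouville \(a\). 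So this is not cosmetic, but once it is fixed the rest of your argument is correct: summability of \(\|f\|\) by lacunarity, the \(\theta\)-dependent comparison \(\langle\tau_j\rangle\asymp\langle\xi_j\rangle\), and terms \(\gtrsim\langle\xi_j\rangle^{2(\nu-r-\varepsilon)}\to\infty\).

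A smaller point: your initial choice \(\max\{r,2\}<\nu<\mu(a)\) is impossible when \(\mu(a)=2\), and reducing to \(r\) close to \(\mu(a)\) does not help. Simply take \(\max\{r,1\}<\nu<\mu(a)\), which is always possible since \(\mu(a)\geq2\). You do need \(\nu>0\) so that \(p_j/q_j\to a\).

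The paper avoids the division issue by reversing the order of construction. It prescribes \(\widehat u(\tau_j,\xi_j)=\langle\tau_j\rangle^{-k_1}\langle\xi_j\rangle^{-k_2-1+r}\), so \(u\in\mathcal D'(\mathbb T^2)\) is automatic and every term of the target norm is \(\asymp_\theta 1\). It then sets \(f=Lu\). The smallness needed for \(f\in H_{\operatorname{mix}}^{(k_1,k_2)}(\mathbb T^2)\) comes from choosing \(r'\in(\max\{r,1\},\mu(a))\) and a subsequence with \(\xi_j^{r'-r}\geq j\). This gives \(|\tau_j-a\xi_j|<\frac1j\,\xi_j^{1-r}\) and \(\|f\|^2\lesssim\sum_j j^{-2}\). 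That route needs no Diophantine lower bound at all. Your route, once repaired, shows slightly more: the terms of the norm of \(u\) blow up rather than merely staying bounded below.
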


\begin{proof}
	Choose \(r'\) such that \(\max\{r,1\}<r'<\mu(a)\), and set 
	\(\varepsilon\coloneq r'-r>0\). 
	
	By Definition~\ref{def_complete_irrationality}, there exist a sequence 
	\((\tau_j,\xi_j)\in\mathbb Z\times\mathbb N\), with
	\(\xi_j\to\infty\), such that
	\[
	0<|\tau_j-a\xi_j|
	<
	\xi_j^{1-r'}
	=
	\xi_j^{1-r}\xi_j^{-\varepsilon}.
	\]
	
	Passing to a subsequence, we may assume
	\(\xi_j^\varepsilon\geq j\), and hence
	\begin{equation}\label{eq:r-below-mu-approximating-sequence}
		0<|\tau_j-a\xi_j|
		<
		\frac1j\,\xi_j^{1-r}.
	\end{equation}
	
	Since \(r'>1\),
	\[
	\langle\tau_j\rangle\asymp_a\langle\xi_j\rangle.
	\]
	
	Define \(u\in\mathcal D'(\mathbb T^2)\) by
	\[
	\widehat u(\tau_j,\xi_j)
	=
	\langle\tau_j\rangle^{-k_1}
	\langle\xi_j\rangle^{-k_2-1+r},
	\]
	and set all other Fourier coefficients equal to zero. These
	coefficients have polynomial growth.
	
	Set \(f=Lu\). By
	\eqref{eq:r-below-mu-approximating-sequence},
	\[
	\|f\|_{H_{\operatorname{mix}}^{(k_1,k_2)}}^2
	=
	\sum_j
	|\tau_j-a\xi_j|^2
	\langle\xi_j\rangle^{-2+2r}
	\lesssim
	\sum_j\frac1{j^2}
	<\infty.
	\]
	
	Thus
	\(f\in H_{\operatorname{mix}}^{(k_1,k_2)}(\mathbb T^2)\).
	
	On the other hand, for any fixed \(\theta\in\mathbb R\),
	\[
	\langle\tau_j\rangle^{2(k_1+\theta)}
	\langle\xi_j\rangle^{2(k_2+1-r-\theta)}
	|\widehat u(\tau_j,\xi_j)|^2
	=
	\left(
	\frac{\langle\tau_j\rangle}{\langle\xi_j\rangle}
	\right)^{2\theta}
	\asymp_{a,\theta}1.
	\]
	
	Hence the corresponding mixed Sobolev series diverges, so
	\(u\notin H_{\operatorname{mix}}^{(k_1+\theta,k_2+1-r-\theta)}(\mathbb T^2)\).
	Since \(\theta\in\mathbb R\) was arbitrary, the result follows.
\end{proof}

The mixed smoothness estimates above do not in general recover the
optimal isotropic regularity via
Proposition~\ref{prop:isotropic-mixed-embeddings}. For comparison, we
recall the isotropic result from \cite[Theorem~4.10]{KowKir2026-jfa}.

\begin{proposition}\label{prop:isotropic-regularity-constant}
	Let \(s\in\mathbb R\), and suppose that
	\(u\in\mathcal D'(\mathbb T^2)\) is a distributional solution of
	\[
	Lu=f,
	\qquad
	f\in H^s(\mathbb T^2).
	\]
	Then the following statements hold.
	\begin{enumerate}
		\item[(i)]
		If \(b\neq0\), then
		\[
		u\in H^{s+1}(\mathbb T^2).
		\]
		
		\item[(ii)]
		If \(b=0\) and
		\(a\in\mathbb R\setminus\mathbb Q\) has finite irrationality
		measure, then, for every \(r>\mu(a)\),
		\[
		u\in H^{s+1-r}(\mathbb T^2).
		\]
	\end{enumerate}
\end{proposition}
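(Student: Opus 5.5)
This is a direct Fourier-multiplier argument, parallel to the proof of Theorem~\ref{theo_regularity_cte} but with the isotropic weight \(\langle(\tau,\xi)\rangle\) in place of the product weights. As there, set \(v\coloneq u-\widehat u(0,0)\). In both cases \(b\neq0\) and \(a\in\mathbb R\setminus\mathbb Q\), the resonance set \(\mathcal R_c\) is \(\{(0,0)\}\). Hence \(\widehat f(0,0)=0\), and \(\widehat v(\tau,\xi)=\widehat f(\tau,\xi)/(i(\tau-c\xi))\) for \((\tau,\xi)\neq(0,0)\). Since constants lie in every \(H^\sigma(\mathbb T^2)\), it suffices to bound \(\|v\|_{H^{s+1}}\), respectively \(\|v\|_{H^{s+1-r}}\), by \(\|f\|_{H^s}\). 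This reduces to isotropic lower bounds for the symbol.

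For part~(i), the proof of Proposition~\ref{prop:mixed-symbol-lower-bounds}(i) already gives \(|\tau-c\xi|\gtrsim_{a,b}(\tau^2+\xi^2)^{1/2}\), using the invertibility of the real-linear map \(T\). On \(\mathbb Z^2\setminus\{(0,0)\}\) we have \((\tau^2+\xi^2)^{1/2}\asymp\langle(\tau,\xi)\rangle\). Therefore \(\langle(\tau,\xi)\rangle^{s+1}|\widehat v|\lesssim\langle(\tau,\xi)\rangle^{s}|\widehat f|\), and squaring and summing gives \(u\in H^{s+1}(\mathbb T^2)\).

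For part~(ii) the needed bound is
\[
|\tau-a\xi|\gtrsim_{a,r}\langle(\tau,\xi)\rangle^{1-r},\qquad (\tau,\xi)\neq(0,0).
\]
I would prove it by the same case split used for the endpoint estimates in Proposition~\ref{prop:mixed-symbol-lower-bounds}(ii).

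First suppose \(|\tau|\geq2|a|\,|\xi|\). Then \(\tau\neq0\) and \(|\tau-a\xi|\geq|\tau|/2\). In this region \(\langle(\tau,\xi)\rangle\asymp_a\langle\tau\rangle\), so \(|\tau-a\xi|\gtrsim\langle(\tau,\xi)\rangle\geq\langle(\tau,\xi)\rangle^{1-r}\), because \(r>1\).

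Otherwise \(|\tau|<2|a|\,|\xi|\), so \(\xi\neq0\) and \(\langle(\tau,\xi)\rangle\asymp_a\langle\xi\rangle\). The Diophantine bound \eqref{eq:diophantine-lower-bound} then gives \(|\tau-a\xi|\gtrsim_{a,r}\langle\xi\rangle^{1-r}\asymp_{a,r}\langle(\tau,\xi)\rangle^{1-r}\).

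With this bound, multiplying by \(\langle(\tau,\xi)\rangle^{s+1-r}\), squaring and summing over \(\mathbb Z^2\) yields \(u\in H^{s+1-r}(\mathbb T^2)\).

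The only step that needs any care is the isotropic small-divisor bound in~(ii), that is, checking that the comparability \(\langle(\tau,\xi)\rangle\asymp\langle\xi\rangle\) holds in the region where \(\tau\) is near \(a\xi\). This is elementary.

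Proposition~\ref{prop:isotropic-mixed-embeddings} is not the right tool here. Passing through the mixed estimates would lose derivatives, which is exactly the non-optimality noted before the statement. The isotropic symbol bound should therefore be used directly.
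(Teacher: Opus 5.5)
Your proof is correct. It takes a different route from the paper only in the sense that the paper gives no proof here: the statement is quoted from \cite[Theorem~4.10]{KowKir2026-jfa}, where it is one instance of a general quantitative loss-of-derivatives theory for Fourier multipliers on the torus.

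\textbf{Comparison.} The citation buys the paper a wider framework. The same reference also supplies the companion sharpness result used later in Corollary~\ref{cor:sharpness-isotropic-variable}, which your argument does not address, although the statement does not require it. Your direct multiplier argument, modelled on the proof of Theorem~\ref{theo_regularity_cte}, makes the result self-contained within the paper. It also makes explicit the isotropic symbol bound that drives it.

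\textbf{A simplification in part~(ii).} You do not need the case split, and you never need the comparability $\langle(\tau,\xi)\rangle\asymp_a\langle\xi\rangle$ that you flag as the delicate step. Since $r>1$, Proposition~\ref{prop:mixed-symbol-lower-bounds}(ii) with $\theta=0$ already gives
\[
|\tau-a\xi|\gtrsim_{a,r}\langle\xi\rangle^{1-r}\geq\langle(\tau,\xi)\rangle^{1-r},
\qquad (\tau,\xi)\neq(0,0).
\]
Equivalently, use \eqref{eq:diophantine-lower-bound} for $\xi\neq0$, and $|\tau|\geq1\geq\langle(\tau,0)\rangle^{1-r}$ for $\xi=0$. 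This uses only the trivial inequality $\langle\xi\rangle\leq\langle(\tau,\xi)\rangle$ and the negativity of the exponent $1-r$. In other words, the isotropic bound is a weakened form of the $\theta=0$ mixed symbol estimate.
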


%=================================================
\subsection{Rational and Liouville obstructions} \
%=================================================

The next results show that the arithmetic assumptions in
Theorem~\ref{theo_regularity_cte} are essential. In the rational
case, the symbol vanishes at infinitely many frequencies, whereas in
the Liouville case it becomes smaller than any prescribed polynomial
rate along suitable sequences of integer frequencies.

\begin{proposition}\label{prop:rational-no-mixed-regularity}
	Let \(a\in\mathbb Q\), and set \(L=\partial_t-a\,\partial_x\).
	Then, for every \(k_1,k_2\in\mathbb R\), there exists a nonzero
	distribution \(u\in\mathcal D'(\mathbb T^2)\) such that
	\[
	Lu=0
	\qquad\text{and}\qquad
	u\notin
	H_{\operatorname{mix}}^{(k_1,k_2)}(\mathbb T^2).
	\]
\end{proposition}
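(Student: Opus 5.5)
The plan is to build \(u\) directly on the resonance set, where the equation \(Lu=0\) places no restriction on the Fourier coefficients. Write \(a=p_0/q_0\) in lowest terms, with \(q_0\in\mathbb N\), so that
\[
\mathcal R_a=\{(mp_0,mq_0):m\in\mathbb Z\}.
\]
By \eqref{eq:Fourier-equation-constant}, a distribution \(u\) satisfies \(Lu=0\) if and only if \(\widehat u\) is supported in \(\mathcal R_a\). So it suffices to choose coefficients on \(\mathcal R_a\) that have polynomial growth but make the mixed Sobolev series at order \((k_1,k_2)\) diverge.

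Concretely, for \(m\geq1\) set
\[
\widehat u(mp_0,mq_0)\coloneq\langle mp_0\rangle^{-k_1}\langle mq_0\rangle^{-k_2},
\]
and let all other coefficients be zero.

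1. Since \(\langle mp_0\rangle\) and \(\langle mq_0\rangle\) are at most \(\langle m\rangle\) up to constants depending on \(a\), these coefficients are bounded by \(C\langle m\rangle^{|k_1|+|k_2|}\). Hence they have polynomial growth and define a distribution \(u\in\mathcal D'(\mathbb T^2)\) by the characterization recalled in Section~\ref{sec:preliminaries}.

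2. The distribution \(u\) is nonzero, because for instance \(\widehat u(p_0,q_0)\neq0\).

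3. Because \(\tau-a\xi=0\) on \(\mathcal R_a\), every Fourier coefficient of \(Lu\) vanishes, so \(Lu=0\).

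4. In the norm \eqref{eq:def-mixed-Sobolev}, each term with \(m\geq1\) equals exactly \(1\). The series therefore diverges, and \(u\notin H_{\operatorname{mix}}^{(k_1,k_2)}(\mathbb T^2)\).

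No step here is a real obstacle. The only points needing care are that the map \(m\mapsto(mp_0,mq_0)\) is injective, which holds because \(q_0\geq1\), and the case \(p_0=0\), i.e.\ \(a=0\). When \(p_0=0\) we have \(\langle mp_0\rangle=1\), and the same construction works unchanged.
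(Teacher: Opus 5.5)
Your proposal is correct and follows the paper's proof essentially verbatim: the same coefficients \(\langle mp_0\rangle^{-k_1}\langle mq_0\rangle^{-k_2}\) on the resonant frequencies \((mp_0,mq_0)\), \(m\in\mathbb N\), with polynomial growth giving a distribution, resonance giving \(Lu=0\), and each term of the mixed Sobolev series equal to \(1\). Your additional checks (that \(m\mapsto(mp_0,mq_0)\) is injective, that \(u\neq0\), and that the case \(a=0\) is covered) are details the paper leaves implicit.
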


\begin{proof}
	Write \(a=p/q\), with \(p\in\mathbb Z, q\in\mathbb N\), and \(\gcd(p,q)=1\).
	Then every frequency \((mp,mq)\), \(m\in\mathbb N\), is resonant.
	Define
	\[
	\widehat u(mp,mq)
	=
	\langle mp\rangle^{-k_1}\langle mq\rangle^{-k_2},
	\qquad m\in\mathbb N,
	\]
	and set all other Fourier coefficients equal to zero. These
	coefficients have polynomial growth, so they define
	\(u\in\mathcal D'(\mathbb T^2)\), and the resonance condition gives
	\(Lu=0\). On the other hand,
	\[
	\sum_{(\tau,\xi)\in\mathbb Z^2}
	\langle\tau\rangle^{2k_1}
	\langle\xi\rangle^{2k_2}
	|\widehat u(\tau,\xi)|^2
	=
	\sum_{m\in\mathbb N}1
	=
	\infty,
	\]
	so
	\(u\notin
	H_{\operatorname{mix}}^{(k_1,k_2)}(\mathbb T^2)\).
\end{proof}

\begin{proposition}\label{prop:liouville-no-mixed-regularity}
	Let \(a\in\mathbb R\setminus\mathbb Q\) be a Liouville number, and
	set \(L=\partial_t-a\,\partial_x\).
	Then, for every \(k_1,k_2\in\mathbb R\), there exist
	\(f\in C^\infty(\mathbb T^2)\) and
	\(u\in\mathcal D'(\mathbb T^2)\) such that
	\[
	Lu=f
	\qquad\text{and}\qquad
	u\notin
	H_{\operatorname{mix}}^{(k_1,k_2)}(\mathbb T^2).
	\]
\end{proposition}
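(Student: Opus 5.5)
The plan is to mimic the proof of Proposition~\ref{prop:sharpness-real-constant-mixed}: place the Fourier mass of \(u\) on a sparse sequence of near-resonant frequencies. The Liouville property will make the symbol \(\tau-a\xi\) smaller than any power of \(\xi\) there, so \(f=Lu\) is smooth while \(u\) fails to lie in the prescribed mixed space.

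First, since \(\mu(a)=\infty\), for every \(j\in\mathbb N\) the inequality \(|a-p/q|<q^{-\nu}\) has infinitely many solutions for each \(\nu\). I would therefore choose inductively \((\tau_j,\xi_j)\in\mathbb Z\times\mathbb N\) with \(\xi_j\) strictly increasing, \(\xi_j\to\infty\), and
\[
0<|\tau_j-a\xi_j|<\xi_j^{-j}.
\]
Taking \(\nu=j+1\) gives \(|\tau_j-a\xi_j|<\xi_j^{1-(j+1)}\). Positivity holds by irrationality. Infinitely many solutions with distinct \(q\) exist because for fixed \(q\) only finitely many \(p\) qualify, so \(\xi_j\) can be taken larger than \(\xi_{j-1}\). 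The pairs are then distinct, and, as before, \(\langle\tau_j\rangle\asymp_a\langle\xi_j\rangle\) for large \(j\) (if \(a=0\) were allowed this would fail, but \(a\) is irrational).

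Next, define \(\widehat u(\tau_j,\xi_j)=\langle\tau_j\rangle^{-k_1}\langle\xi_j\rangle^{-k_2}\) and set all other coefficients to zero. These coefficients have polynomial growth, so \(u\in\mathcal D'(\mathbb T^2)\). Moreover, the \(H_{\operatorname{mix}}^{(k_1,k_2)}\) series equals \(\sum_j 1=\infty\), so \(u\notin H_{\operatorname{mix}}^{(k_1,k_2)}(\mathbb T^2)\). Set \(f=Lu\), so that
\[
\widehat f(\tau_j,\xi_j)=i(\tau_j-a\xi_j)\widehat u(\tau_j,\xi_j).
\]
Using \(\langle\tau_j\rangle\asymp\langle\xi_j\rangle\asymp\langle(\tau_j,\xi_j)\rangle\), we get
\[
|\widehat f(\tau_j,\xi_j)|\lesssim \langle\xi_j\rangle^{|k_1|+|k_2|}\,\xi_j^{-j}.
\]
For each fixed \(N\) and all \(j\geq N+|k_1|+|k_2|\), this is at most a constant times \(\langle(\tau_j,\xi_j)\rangle^{-N}\). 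The finitely many remaining terms are absorbed into the constant. Hence the coefficients of \(f\) are rapidly decreasing, and \(f\in C^\infty(\mathbb T^2)\) by the Fourier characterization recalled in Section~\ref{sec:preliminaries}.

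The only delicate step is the bookkeeping in the choice of the sequence. The decay exponent must grow with \(j\) so that a single sequence defeats every polynomial weight \(N\), and the \(\xi_j\) must be distinct so the frequencies do not collide. Both are handled by the inductive choice above.
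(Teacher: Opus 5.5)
Your proposal is correct and follows the paper's proof essentially verbatim. It uses the same Liouville sequence with \(0<|\tau_j-a\xi_j|<\xi_j^{-j}\), the same coefficients \(\langle\tau_j\rangle^{-k_1}\langle\xi_j\rangle^{-k_2}\), and the same rapid-decay bound of order \(\xi_j^{N+M-j}\) with \(M=|k_1|+|k_2|\). Your added justification that the \(\xi_j\) can be taken strictly increasing (only finitely many \(p\) qualify for each fixed \(q\)) makes explicit a detail the paper leaves implicit.
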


\begin{proof}
	Fix \(k_1,k_2\in\mathbb R\). Since \(a\) is Liouville, there exist
	distinct pairs \((\tau_j,\xi_j)\in\mathbb Z\times\mathbb N\), with
	\(\xi_j\to\infty\),  such that
	\begin{equation}\label{eq:Liouville-approximating-sequence}
		0<|\tau_j-a\xi_j|<\xi_j^{-j}.
	\end{equation}
	
	Since \(\tau_j/\xi_j\to a\), after discarding finitely many terms
	we may assume that
	\[
	\langle\tau_j\rangle\asymp_a\langle\xi_j\rangle\asymp\xi_j.
	\]
	
	Define
	\[
	\widehat u(\tau_j,\xi_j)
	=
	\langle\tau_j\rangle^{-k_1}
	\langle\xi_j\rangle^{-k_2},
	\]
	and set all other Fourier coefficients equal to zero. These
	coefficients have polynomial growth and hence define
	\(u\in\mathcal D'(\mathbb T^2)\).
	
	Set \(f=Lu\). If \(M=|k_1|+|k_2|\), then for every
	\(N\in\mathbb N_0\),
	\[
	\langle(\tau_j,\xi_j)\rangle^N
	|\widehat f(\tau_j,\xi_j)|
	\lesssim_{a,k_1,k_2,N}
	\xi_j^{N+M-j}.
	\]
	
	Thus the Fourier coefficients of \(f\) are rapidly decreasing, and
	hence \(f\in C^\infty(\mathbb T^2)\).
	
	Finally,
	\[
	\sum_{(\tau,\xi)\in\mathbb Z^2}
	\langle\tau\rangle^{2k_1}
	\langle\xi\rangle^{2k_2}
	|\widehat u(\tau,\xi)|^2
	=
	\sum_{j\in\mathbb N}1
	=
	\infty.
	\]
	Therefore
	\(u\notin
	H_{\operatorname{mix}}^{(k_1,k_2)}(\mathbb T^2)\).
\end{proof}

The two propositions show that, for rational or Liouville \(a\), no
prescribed finite loss in the two variables yields a universal mixed
smoothness estimate. More precisely, for every
\(k_1,k_2\in\mathbb R\) and \(r_1,r_2\geq0\), there exist
\[
f\in
H_{\operatorname{mix}}^{(k_1,k_2)}(\mathbb T^2),
\qquad
u\in\mathcal D'(\mathbb T^2),
\]
with \(Lu=f\), such that
\[
u\notin
H_{\operatorname{mix}}^{
	(k_1-r_1,k_2-r_2)}(\mathbb T^2).
\]

%==========================================
%==========================================
\section{Real-Valued Variable-Coefficient Vector Fields} 
\label{sec:variable-real}
%==========================================
%==========================================

We now consider the variable-coefficient vector field
\[
L\coloneq\partial_t-a(t)\partial_x,
\qquad
a\in C^\infty(\mathbb T;\mathbb R),
\]
and denote the average of \(a\) by
\[
a_0\coloneq
\frac{1}{2\pi}\int_0^{2\pi}a(t)\,dt.
\]

The small-divisor structure is determined by \(a_0\), while
differentiation in \(t\) produces additional powers of the Fourier
frequency \(\xi\).

Throughout this section, we assume that
\(a_0\in\mathbb R\setminus\mathbb Q\) has finite irrationality
measure. Then every nonzero Fourier mode is nonresonant, and
Lemma~\ref{lemma_exp_diof} gives, for every \(r>\mu(a_0)\),
\begin{equation}\label{eq:variable-exponential-Diophantine}
	|1-e^{\pm2\pi i\xi a_0}|
	\gtrsim_{a_0,r}
	\langle\xi\rangle^{1-r},
	\qquad
	\xi\in\mathbb Z\setminus\{0\}.
\end{equation}

%=================================================
\subsection{Mixed regularity and solvability} \
%=================================================

We first establish Sobolev estimates for the periodic equations obtained
after taking the partial Fourier transform in \(x\).

\begin{lemma}\label{lem:variable-real-mode-Sobolev-estimate}
	Let \(a\in C^\infty(\mathbb T;\mathbb R)\), and suppose that
	\(a_0\) is irrational and has finite irrationality measure. Let
	\(r>\mu(a_0)\), \(s\in\mathbb R\), and \(0\leq\theta\leq1\).
	For every \(\xi\in\mathbb Z\setminus\{0\}\) and every
	\(h\in H^s(\mathbb T)\), the equation
	\begin{equation*}\label{eq:variable-real-mode-operator}
		\partial_t v(t)-i\xi a(t)v(t)=h(t)
	\end{equation*}
	has a unique distributional solution
	\(v\in\mathcal D'(\mathbb T)\). Moreover,
	\(v\in H^{s+\theta}(\mathbb T)\) and
	\begin{equation}\label{eq:variable-real-mode-Sobolev-estimate}
		\|v\|_{H^{s+\theta}(\mathbb T)}
		\lesssim_{a,s,r,\theta}
		\langle\xi\rangle^{r-1+|s|+\theta}
		\|h\|_{H^s(\mathbb T)}.
	\end{equation}
\end{lemma}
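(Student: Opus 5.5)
The plan is to first prove the estimate for nonnegative integer orders \(s=k\) and \(\theta\in\{0,1\}\), working directly with the representation formula of Lemma~\ref{lem:periodic-mode-equation}. Complex interpolation then gives real \(s\geq0\) and \(\theta\in[0,1]\), and duality handles negative \(s\). Write \(S_\xi\) for the solution operator \(h\mapsto v\).

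\emph{Uniqueness.} If \(v\in\mathcal D'(\mathbb T)\) solves the homogeneous equation, then \(\bigl(e^{-i\xi\int_0^t a}v\bigr)'=0\). Hence \(v=Ce^{i\xi\int_0^t a}\). This function is \(2\pi\)-periodic only if \(e^{2\pi i\xi a_0}=1\), which fails because \(\xi a_0\notin\mathbb Z\). So \(C=0\).

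\emph{Base case \(s=0\).} For \(h\in L^2(\mathbb T)\), formula \eqref{eq:general-variable-solution-backward} and the fact that the exponential factor has modulus one give
\[
\|v\|_{L^\infty}\leq|1-e^{2\pi i\xi a_0}|^{-1}\|h\|_{L^1}.
\]
Combined with \eqref{eq:variable-exponential-Diophantine}, this yields \(\|v\|_{L^2}\lesssim\langle\xi\rangle^{r-1}\|h\|_{L^2}\). For \(\theta=1\) I use the equation itself, \(v'=h+i\xi av\), which gives \(\|v'\|_{L^2}\lesssim\|h\|_{L^2}+\langle\xi\rangle^{r}\|h\|_{L^2}\). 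So \(\|v\|_{H^1}\lesssim\langle\xi\rangle^{r}\|h\|_{L^2}\).

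\emph{Induction on \(k\).} Differentiating the equation gives
\[
v^{(j+1)}=h^{(j)}+i\xi\sum_{l\leq j}\binom{j}{l}a^{(j-l)}v^{(l)}.
\]
By induction this yields \(\|v\|_{H^{k+1}}\lesssim_{a,k}\langle\xi\rangle\|v\|_{H^k}+\|h\|_{H^k}\). Starting from the \(L^2\) bound, this gives, for \(h\in H^k\),
\[
\|v\|_{H^{k}}\lesssim\langle\xi\rangle^{r-1+k}\|h\|_{H^k},
\qquad
\|v\|_{H^{k+1}}\lesssim\langle\xi\rangle^{r+k}\|h\|_{H^k}.
\]
These are exactly \eqref{eq:variable-real-mode-Sobolev-estimate} for \(s=k\) and \(\theta\in\{0,1\}\). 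Complex interpolation in \(\theta\), and then between consecutive integers \(k\), extends the bound \(\langle\xi\rangle^{r-1+s+\theta}\) to all real \(s\geq0\) and \(\theta\in[0,1]\). In \(s\) I interpolate \(S_\xi\colon H^k\to H^{k+\theta}\) against \(S_\xi\colon H^{k+1}\to H^{k+1+\theta}\), and the exponents of \(\langle\xi\rangle\) interpolate linearly. The constants remain uniform in \(\xi\), since only the operator norms enter.

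\emph{Negative orders.} The formal adjoint of \(P_\xi=\partial_t-i\xi a\) with respect to the \(L^2(\mathbb T)\) pairing is \(-\partial_t+i\xi a\). Its solution operator is \(-S_{-\xi}\) with the same \(a\), so it obeys the same estimates with \(\langle\xi\rangle\) unchanged. Since \(S_\xi^{*}=(P_\xi^{-1})^{*}=(P_\xi^{*})^{-1}\), the bound \(S_\xi\colon H^s\to H^{s+\theta}\) for \(s<0\) is equivalent to \(S_\xi^{*}\colon H^{-s-\theta}\to H^{-s}\), which is a gain of \(\theta\) from order \(\sigma=-s-\theta\).

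When \(s\leq-\theta\), we have \(\sigma\geq0\), and the nonnegative case gives the constant
\[
\langle\xi\rangle^{r-1+\sigma+\theta}=\langle\xi\rangle^{r-1+|s|}\leq\langle\xi\rangle^{r-1+|s|+\theta}.
\]
In the strip \(-\theta<s<0\), I interpolate between the already proven estimates at \(s=0\) and at \(s=-\theta\). This gives a constant with exponent at most \(r-1+\theta+|s|\).

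\emph{Existence for distributional data.} For \(h\in H^s\) with \(s<0\), I approximate \(h\) by trigonometric polynomials. The a priori bounds give a limit \(v\in H^{s+\theta}\), and this limit solves the equation in \(\mathcal D'(\mathbb T)\).

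\emph{Main obstacle.} The delicate step is the negative-order range. One has to identify the adjoint solution operator correctly, check that it satisfies the same \(\xi\)-dependent bounds, and keep the bookkeeping so that the loss is \(|s|+\theta\) rather than something worse. In the strip \(-\theta<s<0\) the interpolation must be arranged so that the exponent does not exceed \(r-1+|s|+\theta\).
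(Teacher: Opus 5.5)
Your proof is correct. For $s\geq0$ it is essentially the paper's argument: the $L^2$ bound from the representation formula, the recursion $\|v\|_{H^{k+1}}\lesssim\|h\|_{H^k}+\langle\xi\rangle\|v\|_{H^k}$, and interpolation first in $\theta$ with the domain fixed and then between consecutive integers.

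The genuine difference is in the negative-order step. The paper dualizes only the same-order estimate. It uses $T_\xi^*=-T_\xi$ to pass from $H^\sigma\to H^\sigma$ to $H^{-\sigma}\to H^{-\sigma}$, with constant $\langle\xi\rangle^{r-1+\sigma}$. It then recovers the gain of one derivative at negative order from the equation $\partial_tv=h+i\xi av$, which costs one more power of $\langle\xi\rangle$. Finally it interpolates in $\theta$ with the domain $H^s$ fixed, which gives exactly the exponent $r-1+|s|+\theta$.

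You instead dualize the full gain estimate $H^\sigma\to H^{\sigma+\theta}$. This lands directly on $H^s\to H^{s+\theta}$ for $s=-\sigma-\theta\leq-\theta$, with exponent $r-1+|s|$. You then cover the strip $-\theta<s<0$ by interpolating between $s=0$ and $s=-\theta$, where both constants are $\langle\xi\rangle^{r-1+\theta}$. Both computations check out.

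Your route yields the sharper bound $\langle\xi\rangle^{r-1+\max\{|s|,\theta\}}$ for $s<0$, which would propagate to slightly better exponents for $q<0$ in Theorem~\ref{theo_regularity_real}. It also shows that dualizing the gain estimate works, whereas the paper remarks that the same-order estimate is essential for the duality step. The paper's route, in exchange, uses one uniform template for both signs of $s$: a same-order bound plus the equation, then interpolation. It needs no separate strip argument.

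One small slip, which does not affect the argument. With respect to the sesquilinear $L^2$ pairing, the adjoint $-\partial_t+i\xi a$ equals $-P_\xi$, so its solution operator is $-S_\xi$, not $-S_{-\xi}$. The operator $-S_{-\xi}$ inverts the bilinear transpose $-\partial_t-i\xi a$. Both satisfy the same $\langle\xi\rangle$-dependent bounds, so your estimates are unchanged.
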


\begin{proof}
	Fix \(\xi\in\mathbb Z\setminus\{0\}\). For
	\(h\in C^\infty(\mathbb T)\), let \(v=T_\xi h\) be the unique
	periodic solution given by
	\eqref{eq:general-variable-solution-backward}. Since \(a\) is
	real-valued, Minkowski's integral inequality and
	\eqref{eq:variable-exponential-Diophantine} give
	\begin{equation}\label{eq:variable-real-mode-L2-estimate}
		\|T_\xi h\|_{L^2(\mathbb T)}
		\lesssim_{a_0,r}
		\langle\xi\rangle^{r-1}
		\|h\|_{L^2(\mathbb T)}.
	\end{equation}
	
	For \(m\in\mathbb N_0\), the equation
	\[
	\partial_t v=h+i\xi a(t)v,
	\]
	 the alternative characterization of $H^m(\T)$ in terms of $L^2$-norms of derivatives, and Leibniz's  rule yield
	\[
	\|v\|_{H^{m+1}(\mathbb T)}
	\lesssim_{a,m}
	\|h\|_{H^m(\mathbb T)}
	+
	\langle\xi\rangle
	\|v\|_{H^m(\mathbb T)}.
	\]
    
	Starting from
	\eqref{eq:variable-real-mode-L2-estimate} and arguing by induction,
	we obtain
	\begin{equation}\label{eq:variable-real-mode-gain-integer-order}
		\|T_\xi h\|_{H^{m+1}(\mathbb T)}
		\lesssim_{a,m,r}
		\langle\xi\rangle^{r+m}
		\|h\|_{H^m(\mathbb T)}.
	\end{equation}
	
In particular, for \(m\geq1\), applying
\eqref{eq:variable-real-mode-gain-integer-order} with \(m-1\) and using
the continuous embedding
\(H^m(\mathbb T)\hookrightarrow H^{m-1}(\mathbb T)\) we obtain
\[
\|T_\xi h\|_{H^m(\mathbb T)}
\lesssim_{a,m,r}
\langle\xi\rangle^{r-1+m}
\|h\|_{H^m(\mathbb T)}.
\]

For \(m=0\), the same estimate follows directly from
\eqref{eq:variable-real-mode-L2-estimate}. Hence, for every
\(m\in\mathbb N_0\),
\begin{equation}\label{eq:variable-real-mode-same-integer-order}
	\|T_\xi h\|_{H^m(\mathbb T)}
	\lesssim_{a,m,r}
	\langle\xi\rangle^{r-1+m}
	\|h\|_{H^m(\mathbb T)}.
\end{equation}
	
	Interpolation between consecutive integer orders yields, for every
	\(s\geq0\),
	\begin{align}
		\|T_\xi h\|_{H^s(\mathbb T)}
		&\lesssim_{a,s,r}
		\langle\xi\rangle^{r-1+s}
		\|h\|_{H^s(\mathbb T)},                                     
		\label{eq:variable-real-mode-same-positive-order}
		\\
		\|T_\xi h\|_{H^{s+1}(\mathbb T)}
		&\lesssim_{a,s,r}
		\langle\xi\rangle^{r+s}
		\|h\|_{H^s(\mathbb T)}.
		\label{eq:variable-real-mode-gain-positive-order}
	\end{align}
	
	Interpolating once more, now with the domain \(H^s(\mathbb T)\)
	fixed, proves
	\eqref{eq:variable-real-mode-Sobolev-estimate} for \(s\geq0\).
	
	We now consider negative orders. Let \(s=-\sigma\), with
	\(\sigma>0\), and set
	\[
	P_{\xi,a}\coloneq\partial_t-i\xi a(t).
	\]
	
	Since \(a\) is real-valued, the \(L^2\)-formal adjoint of
	\(P_{\xi,a}\) is
	\[
	P_{\xi,a}^*
	=
	-\partial_t+i\xi a(t)
	=
	-P_{\xi,a}.
	\]
	
	Consequently, on smooth periodic functions, \(T_\xi^*=-T_\xi\).
	
	Using the duality between \(H^{-\sigma}(\mathbb T)\) and
	\(H^\sigma(\mathbb T)\), together with
	\eqref{eq:variable-real-mode-same-positive-order}, we obtain
	\begin{align*}
		\|T_\xi h\|_{H^{-\sigma}(\mathbb T)}
		&=
		\sup_{\varphi\in H^\sigma(\mathbb T)\setminus\{0\}}
		\frac{|\langle T_\xi h,\varphi\rangle|}
		{\|\varphi\|_{H^\sigma(\mathbb T)}}
		=
		\sup_{\varphi\neq0}
		\frac{
			|\langle h,T_\xi^*\varphi\rangle|
		}{
			\|\varphi\|_{H^\sigma(\mathbb T)}
		}
		\\
		&\leq
		\|h\|_{H^{-\sigma}(\mathbb T)}
		\sup_{\varphi\neq0}
		\frac{
			\|T_\xi\varphi\|_{H^\sigma(\mathbb T)}}
		{\|\varphi\|_{H^\sigma(\mathbb T)}}
		\lesssim_{a,\sigma,r}
		\langle\xi\rangle^{r-1+\sigma}
		\|h\|_{H^{-\sigma}(\mathbb T)}.
	\end{align*}
	Here \(\langle\cdot,\cdot\rangle\) denotes the duality pairing
	extending the \(L^2\)-pairing. Thus, for every \(s<0\),
	\begin{equation}\label{eq:variable-real-mode-same-negative-order}
		\|T_\xi h\|_{H^s(\mathbb T)}
		\lesssim_{a,s,r}
		\langle\xi\rangle^{r-1+|s|}
		\|h\|_{H^s(\mathbb T)}.
	\end{equation}
	
	Notice that the same-order estimate is essential in this duality
	argument: the estimate of \(T_\xi\) on \(H^\sigma(\mathbb T)\)
	transfers directly to the corresponding estimate on
	\(H^{-\sigma}(\mathbb T)\).
	
	Using again
	\[
	\partial_t v=h+i\xi a(t)v
	\]
	and the boundedness of multiplication by \(a\) on
	\(H^s(\mathbb T)\), we obtain
	\begin{align*}
		\|v\|_{H^{s+1}(\mathbb T)}
		&\lesssim_s
		\|v\|_{H^s(\mathbb T)}
		+
		\|\partial_t v\|_{H^s(\mathbb T)}
		\\
		&\lesssim_{a,s}
		\|h\|_{H^s(\mathbb T)}
		+
		\langle\xi\rangle
		\|v\|_{H^s(\mathbb T)}
		\\
		&\lesssim_{a,s,r}
		\langle\xi\rangle^{r+|s|}
		\|h\|_{H^s(\mathbb T)}.
	\end{align*}
	
	Hence
	\begin{equation}\label{eq:variable-real-mode-gain-negative-order}
		\|T_\xi h\|_{H^{s+1}(\mathbb T)}
		\lesssim_{a,s,r}
		\langle\xi\rangle^{r+|s|}
		\|h\|_{H^s(\mathbb T)}.
	\end{equation}
	
	Interpolating, with the domain \(H^s(\mathbb T)\) fixed, between
	\eqref{eq:variable-real-mode-same-negative-order} and
	\eqref{eq:variable-real-mode-gain-negative-order} proves
	\eqref{eq:variable-real-mode-Sobolev-estimate} for \(s<0\).
	
	Finally, since \(C^\infty(\mathbb T)\) is dense in
	\(H^s(\mathbb T)\), the estimates above extend \(T_\xi\) uniquely
	from smooth functions to the corresponding Sobolev spaces. More
	precisely, the estimate with \(\theta=1\) gives a bounded extension
	\[
	T_\xi:H^s(\mathbb T)\longrightarrow H^{s+1}(\mathbb T).
	\]
	Since \(P_{\xi,a}:H^{s+1}(\mathbb T)\longrightarrow H^s(\mathbb T)\)
	is continuous, the identity
	\[
	P_{\xi,a}T_\xi h=h,
	\]
	initially valid for smooth \(h\), extends by density to every
	\(h\in H^s(\mathbb T)\). Thus \(T_\xi h\) is a distributional
	solution of the equation. Uniqueness follows from the homogeneous
	equation and the fact that
	\(e^{2\pi i\xi a_0}\neq1\).
\end{proof}

\begin{theorem}\label{theo_regularity_real}
	Let \(a\in C^\infty(\mathbb T;\mathbb R)\), let
	\(k_1,k_2\in\mathbb R\), and suppose that
	\(u\in\mathcal D'(\mathbb T^2)\) satisfies 
	\(Lu=f \in 
	H_{\operatorname{mix}}^{(k_1,k_2)}(\mathbb T^2)\). 
	If \(a_0\in\mathbb R\setminus\mathbb Q\) has finite
	irrationality measure. Then, for every \(r>\mu(a_0)\), every
	\(q\leq k_1\), and every \(0\leq\theta\leq1\),
	\begin{equation}\label{eq:variable-real-intermediate-regularity}
		u\in
		H_{\operatorname{mix}}^{
			(q+\theta,\,
			k_2+1-r-|q|-\theta)}
		(\mathbb T^2).
	\end{equation}
	More precisely,
	\begin{equation}\label{eq:variable-real-quantitative-estimate}
		\left\|
		u-\widehat u(0,0)
		\right\|_{
			H_{\operatorname{mix}}^{
				(q+\theta,\,
				k_2+1-r-|q|-\theta)}
			(\mathbb T^2)}
		\lesssim_{a,k_1,q,r,\theta}
		\|f\|_{
			H_{\operatorname{mix}}^{(k_1,k_2)}
			(\mathbb T^2)}.
	\end{equation}
	
	If \(k_1\geq0\), then, for every \(r>\mu(a_0)\) and every
	\(0\leq\sigma\leq k_1+1\),
	\begin{equation}\label{eq:variable-real-nonnegative-family}
		u\in
		H_{\operatorname{mix}}^{
			(\sigma,k_2+1-r-\sigma)}
		(\mathbb T^2).
	\end{equation}
\end{theorem}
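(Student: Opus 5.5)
The plan is to reduce the theorem mode by mode to Lemma~\ref{lem:variable-real-mode-Sobolev-estimate}, using the partial characterization \eqref{eq:mixed-partial-characterization} of the mixed norm. Taking the partial Fourier transform in \(x\), the equation \(Lu=f\) becomes the family \eqref{eq:variable-partial-Fourier-equation}. The modes \(\xi=0\) and \(\xi\neq0\) will be handled separately.

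\emph{The zero mode.} Here \(\partial_t\widehat u(\cdot,0)=\widehat f(\cdot,0)\), which forces \(\widehat f(0,0)=0\). On the Fourier side in \(\tau\), for \(\tau\neq0\) we have \(\widehat u(\tau,0)=\widehat f(\tau,0)/(i\tau)\). Hence
\[
\|\widehat u(\cdot,0)-\widehat u(0,0)\|_{H^{q+\theta}}\lesssim\|\widehat f(\cdot,0)\|_{H^{q+\theta-1}}\le\|\widehat f(\cdot,0)\|_{H^{k_1}},
\]
since \(q+\theta-1\le k_1\). The weight \(\langle 0\rangle^{\cdot}=1\), so the second index is irrelevant for this mode.

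\emph{The nonzero modes.} For \(\xi\neq0\), observe first that \(\widehat f(\cdot,\xi)\in H^{k_1}(\mathbb T)\subset H^q(\mathbb T)\) because \(q\le k_1\), with \(\|\cdot\|_{H^q}\le\|\cdot\|_{H^{k_1}}\). The distribution \(\widehat u(\cdot,\xi)\in\mathcal D'(\mathbb T)\) solves the mode equation. By the uniqueness part of Lemma~\ref{lem:variable-real-mode-Sobolev-estimate}, which holds in all of \(\mathcal D'(\mathbb T)\) since any distribution lies in some \(H^{s}\) and the homogeneous equation has only the trivial periodic solution because \(e^{2\pi i\xi a_0}\neq1\), it coincides with \(T_\xi\widehat f(\cdot,\xi)\). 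The lemma with \(s=q\) then gives
\[
\|\widehat u(\cdot,\xi)\|_{H^{q+\theta}}\lesssim\langle\xi\rangle^{r-1+|q|+\theta}\|\widehat f(\cdot,\xi)\|_{H^{k_1}}.
\]

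\emph{Summing.} Multiply the mode estimate by \(\langle\xi\rangle^{2(k_2+1-r-|q|-\theta)}\), square, and sum over \(\xi\neq0\). The powers of \(\langle\xi\rangle\) cancel exactly to \(\langle\xi\rangle^{2k_2}\), and \eqref{eq:mixed-partial-characterization} yields \eqref{eq:variable-real-quantitative-estimate}. Adding the zero mode and noting that the constant \(\widehat u(0,0)\) lies in every mixed space gives \eqref{eq:variable-real-intermediate-regularity}.

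\emph{The nonnegative family.} For \(k_1\ge0\), given \(0\le\sigma\le k_1+1\), choose \(q\in[0,k_1]\) and \(\theta\in[0,1]\) with \(q+\theta=\sigma\). Since \(q\ge0\), we have \(|q|=q\) and the second index becomes \(k_2+1-r-\sigma\). This proves \eqref{eq:variable-real-nonnegative-family}.

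The only genuinely delicate point is the identification of an arbitrary distributional solution's modes with \(T_\xi\widehat f(\cdot,\xi)\), which requires the uniqueness statement of the lemma in the full class \(\mathcal D'(\mathbb T)\). I would verify this directly: the homogeneous solutions satisfy \(v=c\,e^{i\xi\int_0^t a}\) by the integrating factor, and periodicity forces \(c=0\). The rest is bookkeeping, with constants independent of \(\xi\) as the lemma provides.
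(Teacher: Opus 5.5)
Your proposal is correct and follows the paper's proof essentially step by step. You treat the zero mode through $\widehat u(\tau,0)=\widehat f(\tau,0)/(i\tau)$ and the nonzero modes through Lemma~\ref{lem:variable-real-mode-Sobolev-estimate} at $s=q$ with $H^{k_1}(\mathbb T)\hookrightarrow H^q(\mathbb T)$, then sum using \eqref{eq:mixed-partial-characterization} and take $q=\min\{\sigma,k_1\}$, $\theta=\sigma-q$ for the last family, exactly as the paper does; your explicit identification of each mode $\widehat u(\cdot,\xi)$ with the lemma's solution, via uniqueness in $\mathcal D'(\mathbb T)$ and $e^{2\pi i\xi a_0}\neq1$, is the step the paper uses implicitly through the uniqueness clause of the lemma.
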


\begin{proof}
	Set \(v\coloneq u-\widehat u(0,0)\). Then
	\(\widehat v(0,0)=0\), \(Lv=f\), and
	\(\widehat f(0,0)=0\).
	
	Fix \(r>\mu(a_0)\), \(q\leq k_1\), and
	\(0\leq\theta\leq1\). For \(\xi\neq0\),
	Lemma~\ref{lem:variable-real-mode-Sobolev-estimate} and the
	embedding \(H^{k_1}(\mathbb T)\hookrightarrow H^q(\mathbb T)\)
	give
	\[
	\|\widehat v(\cdot,\xi)\|_{H^{q+\theta}(\mathbb T)}
	\lesssim_{a,k_1,q,r,\theta}
	\langle\xi\rangle^{r-1+|q|+\theta}
	\|\widehat f(\cdot,\xi)\|_{H^{k_1}(\mathbb T)}.
	\]
	
	Therefore
	\begin{equation}\label{eq:variable-real-nonzero-mode-estimate}
		\langle\xi\rangle^{k_2+1-r-|q|-\theta}
		\|\widehat v(\cdot,\xi)\|_{H^{q+\theta}(\mathbb T)}
		\lesssim_{a,k_1,q,r,\theta}
		\langle\xi\rangle^{k_2}
		\|\widehat f(\cdot,\xi)\|_{H^{k_1}(\mathbb T)}.
	\end{equation}
	
	For \(\xi=0\),
	\[
	\partial_t\widehat v(t,0)=\widehat f(t,0),
	\qquad
	\widehat v(0,0)=0.
	\]
	Hence
	\[
	\|\widehat v(\cdot,0)\|_{H^{q+\theta}(\mathbb T)}
	\lesssim
	\|\widehat f(\cdot,0)\|_{H^{q+\theta-1}(\mathbb T)}
	\lesssim
	\|\widehat f(\cdot,0)\|_{H^{k_1}(\mathbb T)},
	\]
	since \(q+\theta-1\leq q\leq k_1\).
	
	Squaring \eqref{eq:variable-real-nonzero-mode-estimate}, summing
	over \(\xi\neq0\), and including the zero mode, we obtain from
	\eqref{eq:mixed-partial-characterization}:
	\[
	\|v\|_{
		H_{\operatorname{mix}}^{
			(q+\theta,\,
			k_2+1-r-|q|-\theta)}
		(\mathbb T^2)}
	\lesssim_{a,k_1,q,r,\theta}
	\|f\|_{
		H_{\operatorname{mix}}^{(k_1,k_2)}
		(\mathbb T^2)}.
	\]
	This proves \eqref{eq:variable-real-quantitative-estimate}, and
	\eqref{eq:variable-real-intermediate-regularity} follows since
	\(u=v+\widehat u(0,0)\).
	
	Finally, suppose that \(k_1\geq0\) and
	\(0\leq\sigma\leq k_1+1\). Set
	\[
	q\coloneq\min\{\sigma,k_1\},
	\qquad
	\theta\coloneq\sigma-q.
	\]
	Then \(0\leq q\leq k_1\), \(0\leq\theta\leq1\),
	\(q+\theta=\sigma\), and \(|q|=q\). Thus
	\eqref{eq:variable-real-intermediate-regularity} gives
	\[
	u\in
	H_{\operatorname{mix}}^{
		(\sigma,k_2+1-r-\sigma)}
	(\mathbb T^2),
	\]
	which proves \eqref{eq:variable-real-nonnegative-family}.
\end{proof}
	
	\begin{corollary}\label{coro_existence_real}
		Let \(k_1,k_2\in\mathbb R\), and let
		\(f\in
		H_{\operatorname{mix}}^{(k_1,k_2)}(\mathbb T^2)\)
		satisfy \(\widehat f(0,0)=0\).
		Suppose that
		\(a_0\in\mathbb R\setminus\mathbb Q\) has finite irrationality
		measure. Then there exists a unique distributional solution
		\(u\in\mathcal D'(\mathbb T^2)\) of \(Lu=f\) satisfying
		\(\widehat u(0,0)=0\).
		
		Moreover, for every \(r>\mu(a_0)\), every \(q\leq k_1\), and
		every \(0\leq\theta\leq1\), this solution satisfies
		\eqref{eq:variable-real-intermediate-regularity} and
		\eqref{eq:variable-real-quantitative-estimate}.
	\end{corollary}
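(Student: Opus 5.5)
The plan is to construct the solution mode by mode in the partial Fourier variable \(x\), and to take both the existence and the estimates from Lemma~\ref{lem:variable-real-mode-Sobolev-estimate} and Theorem~\ref{theo_regularity_real}.

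Fix \(r>\mu(a_0)\) and choose any \(q\le k_1\); for definiteness take \(q=k_1\) and \(\theta=0\). By \eqref{eq:mixed-partial-characterization}, each partial coefficient satisfies \(\widehat f(\cdot,\xi)\in H^{k_1}(\mathbb T)\).

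\textbf{Nonzero modes.} For \(\xi\neq0\), define
\[
\widehat u(\cdot,\xi)\coloneq T_\xi\widehat f(\cdot,\xi)\in H^{k_1}(\mathbb T),
\]
the unique solution provided by Lemma~\ref{lem:variable-real-mode-Sobolev-estimate}.

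\textbf{Zero mode.} The hypothesis \(\widehat f(0,0)=0\) says that \(\widehat f(\cdot,0)\) has zero mean, so it has a unique zero-mean primitive. Concretely, set \(\widehat u(0,0)=0\) and
\[
\widehat u(\tau,0)=\frac{\widehat f(\tau,0)}{i\tau},\qquad \tau\neq0.
\]

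\textbf{Convergence of the series.} By \eqref{eq:variable-real-mode-Sobolev-estimate},
\[
\|\widehat u(\cdot,\xi)\|_{H^{k_1}}\lesssim\langle\xi\rangle^{r-1+|k_1|}\|\widehat f(\cdot,\xi)\|_{H^{k_1}},
\]
with constants independent of \(\xi\). Multiplying by \(\langle\xi\rangle^{k_2+1-r-|k_1|}\), squaring and summing shows that
\[
u\coloneq\sum_\xi \widehat u(t,\xi)e^{ix\xi}\in H_{\operatorname{mix}}^{(k_1,k_2+1-r-|k_1|)}(\mathbb T^2)\subset\mathcal D'(\mathbb T^2).
\]
In particular the Fourier coefficients have polynomial growth, so \(u\) is a well-defined distribution.

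\textbf{\(u\) solves the equation.} The operator \(L\) acts diagonally on partial Fourier coefficients, as in \eqref{eq:variable-partial-Fourier-equation}. Each mode equation holds in \(\mathcal D'(\mathbb T)\), so \(Lu=f\) holds coefficientwise. Since the partial Fourier series converges in \(\mathcal D'(\mathbb T^2)\), it follows that \(Lu=f\) in \(\mathcal D'(\mathbb T^2)\).

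\textbf{Uniqueness.} Let \(w\) be the difference of two solutions with vanishing \((0,0)\) coefficient, so \(Lw=0\).
\begin{itemize}
\item For \(\xi\neq0\), the uniqueness part of Lemma~\ref{lem:variable-real-mode-Sobolev-estimate} (which uses \(e^{2\pi i\xi a_0}\neq1\), valid since \(a_0\) is irrational) gives \(\widehat w(\cdot,\xi)=0\).
\item For \(\xi=0\), the equation reads \(\partial_t\widehat w(\cdot,0)=0\), so \(\widehat w(\cdot,0)\) is constant. That constant equals \(\widehat w(0,0)=0\).
\end{itemize}
Hence \(w=0\).

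\textbf{The regularity statements.} Since \(u\) is a distributional solution with \(\widehat u(0,0)=0\), Theorem~\ref{theo_regularity_real} applies directly. It yields \eqref{eq:variable-real-intermediate-regularity}, and \eqref{eq:variable-real-quantitative-estimate} holds with \(u-\widehat u(0,0)=u\).

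\textbf{Main obstacle.} The only delicate point is making sure the modewise construction defines a genuine distribution whose image under \(L\) is computed termwise. This requires uniform polynomial control in \(\xi\) of the mode solution operators, and that control is exactly what the explicit power \(\langle\xi\rangle^{r-1+|s|+\theta}\) in the Lemma supplies, including for negative \(k_1\).
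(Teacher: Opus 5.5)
Your proposal is correct and follows essentially the same route as the paper. Both solve each nonzero mode with Lemma~\ref{lem:variable-real-mode-Sobolev-estimate} at \(s=k_1\), \(\theta=0\), define the zero mode by dividing by \(i\tau\), conclude \(u\in H_{\operatorname{mix}}^{(k_1,k_2+1-r-|k_1|)}(\mathbb T^2)\), obtain the regularity statements from Theorem~\ref{theo_regularity_real}, and prove uniqueness mode by mode.
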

	
	\begin{proof}
		For \(\xi\neq0\), let \(u_\xi\) be the unique periodic solution of
		\(\partial_tu_\xi-i\xi a(t)u_\xi=\widehat f(\cdot,\xi)\) given by
		Lemma~\ref{lem:variable-real-mode-Sobolev-estimate}. Fix
		\(r_0>\mu(a_0)\). Applying the lemma with \(s=k_1\) and
		\(\theta=0\), we obtain
		\[
		\|u_\xi\|_{H^{k_1}(\mathbb T)}
		\lesssim_{a,k_1,r_0}
		\langle\xi\rangle^{r_0-1+|k_1|}
		\|\widehat f(\cdot,\xi)\|_{H^{k_1}(\mathbb T)}.
		\]
		
		For \(\xi=0\), define \(u_0\) by
		\[
		\widehat{u_0}(\tau)
		=
		\begin{cases}
			\dfrac{\widehat f(\tau,0)}{i\tau},
			&\tau\neq0,\\[0.6em]
			0,
			&\tau=0.
		\end{cases}
		\]
		
		The condition \(\widehat f(0,0)=0\) ensures that
		\(\partial_tu_0=\widehat f(\cdot,0)\), and
		\[
		\|u_0\|_{H^{k_1+1}(\mathbb T)}
		\lesssim_{k_1}
		\|\widehat f(\cdot,0)\|_{H^{k_1}(\mathbb T)}.
		\]
		
		Defining
		\(\widehat u(\cdot,\xi)=u_\xi\), the preceding estimates and
		\eqref{eq:mixed-partial-characterization} give \(Lu=f\), 
		\(\widehat u(0,0)=0\), and 
		\[
		u\in
		H_{\operatorname{mix}}^{
			(k_1,k_2+1-r_0-|k_1|)}(\mathbb T^2)
		\subset\mathcal D'(\mathbb T^2).
		\]
		
		Theorem~\ref{theo_regularity_real} now gives
		\eqref{eq:variable-real-intermediate-regularity} and
		\eqref{eq:variable-real-quantitative-estimate} for every
		\(r>\mu(a_0)\).
		
		Finally, if \(w\) is the difference of two such solutions, then
		\(Lw=0\). For every \(\xi\neq0\), uniqueness for the corresponding
		mode equation gives \(\widehat w(\cdot,\xi)=0\), while
		\(\partial_t\widehat w(t,0)=0\). Since
		\(\widehat w(0,0)=0\), the zero mode also vanishes. Hence \(w=0\).
	\end{proof}

\begin{remark}\label{rem:variable-constant-frontiers}
	The preceding theorem gives a useful geometric comparison between the
	mixed regularity obtained from the direct variable-coefficient estimates
	and that given by the averaged constant-coefficient field
	\(L_0\coloneq\partial_t-a_0\partial_x\).
	
	For the same data
	\(f\in H_{\operatorname{mix}}^{(k_1,k_2)}(\mathbb T^2)\),
	the constant-coefficient result yields the admissible region
	\[
	s_1\leq k_1+1,\qquad
	s_2\leq k_2+1,\qquad
	s_1+s_2<k_1+k_2+1-\mu(a_0).
	\]

	By contrast, taking the union of the direct variable-coefficient
	estimates over \(r> \mu(a_0)\) 	gives, when \(k_1\geq0\),
	\[
	s_1\leq k_1+1,\qquad
	s_2<k_2+1-\mu(a_0),\qquad
	s_1+s_2<k_2+1-\mu(a_0),
	\]
	while for \(k_1<0\) it gives
	\[
	s_1\leq k_1+1,\qquad
	s_2<k_2+1-\mu(a_0)-|k_1|,\qquad
	s_1+s_2<k_2+1-\mu(a_0)-2|k_1|.
	\]
	
	Thus the region obtained from the direct variable-coefficient analysis
	is contained in the corresponding constant-coefficient region for
	\(L_0\).  Figure~\ref{fig:variable-constant-frontiers} displays the
	two boundaries for a visual comparison.

	\begin{figure}[!htb]
		\centering

		% Dark gray: averaged constant-coefficient field L_0
		% Light gray: direct variable-coefficient estimates
		\tikzset{
			constfront/.style={thick, black!60},
			varfront/.style={thick, black!30},
			openpoint/.style={
				circle, draw=black, fill=white, thick,
				inner sep=0.5pt, minimum size=3pt
			},
			gaparrow/.style={<->, thin, black!50}
		}

		\begin{minipage}[t]{0.48\textwidth}
			\centering
			\begin{tikzpicture}[
				x=0.45cm,
				y=0.45cm,
				every node/.style={font=\fontsize{7}{8.5}\selectfont},
				>={Latex}
				]
				%-------------------------------------------------
				% Panel (a): k_1 >= 0
				%-------------------------------------------------

				% Averaged constant-coefficient frontier
				\coordinate (CTopLeft)     at (-4,6);
				\coordinate (CTopOpen)     at (1,6);
				\coordinate (CBottomOpen)  at (3,4);
				\coordinate (CVerticalEnd) at (3,-1);

				% Variable-coefficient frontier
				\coordinate (VTopLeft)     at (-4,5);
				\coordinate (VTopOpen)     at (0,5);
				\coordinate (VBottomOpen)  at (3,2);
				\coordinate (VVerticalEnd) at (3,-1);

				% Constant-coefficient frontier:
				% horizontal and vertical parts are included;
				% the slanted endpoint frontier r=mu_0 is not included in general.
				\draw[constfront] (CTopLeft) -- (CTopOpen);
				\draw[constfront, dashed] (CTopOpen) -- (CBottomOpen);
				\draw[constfront] (CBottomOpen) -- (CVerticalEnd);

				% Variable-coefficient frontier:
				% the horizontal and slanted upper frontier corresponds to
				% the limiting endpoint r=mu_0, hence is dashed.
				\draw[varfront, dashed] (VTopLeft) -- (VTopOpen);
				\draw[varfront, dashed] (VTopOpen) -- (VBottomOpen);
				\draw[varfront] (VBottomOpen) -- (VVerticalEnd);

				% Axes
				\draw[->, thick, black] (-4,0) -- (7,0)
				node[right] {$s_1$};
				\draw[->, thick, black] (0,-1) -- (0,8)
				node[above] {$s_2$};

				% Open points
				\node[openpoint] at (CTopOpen) {};
				\node[openpoint] at (CBottomOpen) {};
				\node[openpoint] at (VTopOpen) {};
				\node[openpoint] at (VBottomOpen) {};

				% Labels: averaged constant coefficient
				\node[black, anchor=west] at (CTopOpen)
				{$(k_1-\mu(a_0),\,k_2+1)$};
				\node[black, anchor=west] at (CBottomOpen)
				{$(k_1+1,\,k_2-\mu(a_0))$};

				% Labels: variable coefficient
				\node[black, anchor=east, yshift=-6pt] at (VTopOpen)
				{$(0,\,k_2+1-\mu(a_0))$};
				\node[black, anchor=west] at (VBottomOpen)
				{$(k_1+1,\,k_2-\mu(a_0)-k_1)$};

				% Vertical separation between the two oblique supporting lines
				\draw[gaparrow] (1.55,5.45) -- (1.55,3.45)
				node[midway, left, xshift=3pt, yshift=3pt] {$|k_1|$};
			\end{tikzpicture}
			{\small (a) \(k_1\geq0\)}
		\end{minipage}
		\hfill
		\begin{minipage}[t]{0.5\textwidth}
			\centering
			\begin{tikzpicture}[
				x=0.45cm,
				y=0.45cm,
				every node/.style={font=\fontsize{7}{8.5}\selectfont},
				>={Latex}
				]
				%-------------------------------------------------
				% Panel (b): k_1 < 0
				% The drawing is schematic; the relative position of
				% k_1+1 and the s_2-axis depends on the value of k_1.
				%-------------------------------------------------

				% Averaged constant-coefficient frontier
				\coordinate (CTopLeftB)     at (-6,6);
				\coordinate (CTopOpenB)     at (-3,6);
				\coordinate (CBottomOpenB)  at (-0.5,3.5);
				\coordinate (CVerticalEndB) at (-0.5,-0.5);

				% Variable-coefficient frontier
				\coordinate (VTopLeftB)     at (-6.5,2.3);
				\coordinate (VTopOpenB)     at (-1.8,2.3);
				\coordinate (VBottomOpenB)  at (-0.5,1);
				\coordinate (VVerticalEndB) at (-0.5,-1);

				% Constant-coefficient frontier
				\draw[constfront] (CTopLeftB) -- (CTopOpenB);
				\draw[constfront, dashed] (CTopOpenB) -- (CBottomOpenB);
				\draw[constfront] (CBottomOpenB) -- (CVerticalEndB);

				% Variable-coefficient frontier
				\draw[varfront, dashed] (VTopLeftB) -- (VTopOpenB);
				\draw[varfront, dashed] (VTopOpenB) -- (VBottomOpenB);
				\draw[varfront] (VBottomOpenB) -- (VVerticalEndB);

				% Axes
				\draw[->, thick, black] (-7,0) -- (4,0)
				node[right] {$s_1$};
				\draw[->, thick, black] (0,-1) -- (0,8)
				node[above] {$s_2$};

				% Open points
				\node[openpoint] at (CTopOpenB) {};
				\node[openpoint] at (CBottomOpenB) {};
				\node[openpoint] at (VTopOpenB) {};
				\node[openpoint] at (VBottomOpenB) {};

				% Labels: averaged constant coefficient
				\node[black, anchor=east, yshift=-7pt, xshift=3pt] at (CTopOpenB)
				{$(k_1-\mu(a_0),\,k_2+1)$};
				\node[black, anchor=west, xshift=1pt, yshift=-2pt, xshift=4pt]
				at (CBottomOpenB)
				{$(k_1+1,\,k_2-\mu(a_0))$};

				% Labels: variable coefficient
				\node[black, anchor=east, yshift=-6pt, xshift=2pt] at (VTopOpenB)
				{$(k_1,\,k_2+1-\mu(a_0)-|k_1|)$};
				\node[black, anchor=west, xshift=-2pt, yshift=-2pt, xshift=6pt]
				at (VBottomOpenB)
				{$(k_1+1,\,k_2-\mu(a_0)-|k_1|)$};

				% Vertical separation between the two oblique supporting lines
				\draw[gaparrow] (-1.48,4.33) -- (-1.48,1.96)
				node[midway, left, xshift=2pt] {$|k_1|$};
			\end{tikzpicture}
			{\small (b) \(k_1<0\)}
		\end{minipage}

		\caption{\footnotesize
			Comparison of the admissible mixed-regularity regions for
			\(Lu=f\), with
			\(f\in H_{\operatorname{mix}}^{(k_1,k_2)}(\mathbb T^2)\).
			In each panel, the admissible region is the lower-left region
			bounded by the graphed lines.
			The dark-gray boundary corresponds to the averaged
			constant-coefficient field
			\(L_0=\partial_t-a_0\partial_x\), whereas the  light-gray lines correspond to the
		 boundary of the region obtained from the direct estimates for
			\(L=\partial_t-a(t)\partial_x\).
			The two regions have the same rightmost boundary
			\(s_1=k_1+1\), while the oblique supporting line for the
			variable-coefficient estimate  is parallel to the corresponding constant-coefficient line and differs from it
             vertically  by \(|k_1|\).
			Dashed portions correspond to the limiting exponent
		 \(r=\mu(a_0)\) and are not attained in general.
			The diagrams are schematic and not drawn to scale.}
		\label{fig:variable-constant-frontiers}
	\end{figure}

	The figure highlights three features of the comparison.  First, the
	maximal regularity in the \(t\)-direction is unchanged: both estimates
	have the same vertical boundary \(s_1=k_1+1\).  Second,  the maximal regularity in the $x$-direction of the variable-coefficient case lies below that of the averaged problem, represented as a pair of parallel horizontal lines, with a distance of \(\mu_0\) when \(k_1\geq0\) and of
	\(\mu(a_0)+|k_1|\) when \(k_1<0\).  Most importantly, the two oblique
	frontiers are parallel.  Their supporting lines are
	\[
	s_1+s_2=k_1+k_2+1-\mu(a_0)
	\]
	for the averaged constant-coefficient problem and
	\[
	s_1+s_2=
	\begin{cases}
		k_2+1-\mu(a_0), & k_1\geq0,\\[0.3em]
		k_2+1-\mu(a_0)-2|k_1|, & k_1<0,
	\end{cases}
	\]
	for the direct variable-coefficient estimates.  In either case, the
	second line is shifted downward by exactly \(|k_1|\).  Thus
	\(|k_1|\) measures precisely the additional loss in the mixed-scale
	information produced by the direct modal argument when compared with
	the averaged constant-coefficient model.

	This geometry can be read directly from
	\eqref{eq:variable-real-intermediate-regularity}.  For each fixed
	\(q\leq k_1\), the parameter \(0\leq\theta\leq1\) describes the
	unit segment
	\[
	(s_1,s_2)
	=
	\bigl(
	q+\theta,\,
	k_2+1-r-|q|-\theta
	\bigr),
	\]
	which lies on the line
	\[
	s_1+s_2=k_2+1-r+q-|q|.
	\]
	When \(q\geq0\), these segments lie on the common line
	\[
	s_1+s_2=k_2+1-r,
	\]
	whereas for \(q<0\) they are shifted downward to
	\[
	s_1+s_2=k_2+1-r-2|q|.
	\]
	The latter loss reflects the factor
	\(\langle\xi\rangle^{|q|}\) in the negative-order modal estimate.

	If the Diophantine lower bound is valid at \(r=\mu_0\), the
	corresponding limiting portions of the boundaries are included as well.
\end{remark}

	%=================================================
	\subsection{Isotropic consequences} \
	%=================================================
	
	We now specialize Theorem~\ref{theo_regularity_real} to isotropic
	data and determine the best isotropic regularity obtainable from the
	resulting direct mixed smoothness estimates.
	
	\begin{corollary}\label{coro_regularity_variable_real}
		Let \(k\geq0\), and suppose that
		\(u\in\mathcal D'(\mathbb T^2)\) is a solution of
		\[
		Lu=f,
		\qquad
		f\in H^k(\mathbb T^2).
		\]
		
		Suppose that \(a_0\in\mathbb R\setminus\mathbb Q\) has finite
		irrationality measure. Then,
		for every \(r>\mu(a_0)\),
		\begin{equation}\label{eq:mixed-consequence-isotropic-data}
			\begin{aligned}
				u&\in
				H_{\operatorname{mix}}^{
					(\sigma,k+1-r-\sigma)
				}(\mathbb T^2),
				&&0\leq\sigma\leq1,
				\\
				u&\in
				H_{\operatorname{mix}}^{
					(\sigma,k+2-r-2\sigma)
				}(\mathbb T^2),
				&&1\leq\sigma\leq k+1.
			\end{aligned}
		\end{equation}

		Consequently,
		\[
		u\in H^{\rho_{\operatorname{mix}}(k,r)}(\mathbb T^2),
		\]
		where
		\begin{equation}\label{eq:isotropic-variable-regularity-index}
			\rho_{\operatorname{mix}}(k,r)
			=
			\begin{cases}
				k+1-r,
				& k-r\leq-1,
				\\[0.4em]
				\dfrac{k+1-r}{2},
				& -1\leq k-r\leq1,
				\\[0.8em]
				\dfrac{k+2-r}{3},
				& k-r\geq1.
			\end{cases}
		\end{equation}
	\end{corollary}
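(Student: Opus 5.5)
The plan is to reduce the isotropic hypothesis to mixed hypotheses via Proposition~\ref{prop:isotropic-mixed-embeddings}, apply Theorem~\ref{theo_regularity_real} for two choices of the mixed data indices, and then return to the isotropic scale by the same proposition.

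\emph{Step 1 (mixed data).} For $k\geq0$ and any $k_1\in[0,k]$, set $k_2\coloneq k-k_1\geq0$. Then $\max\{k_1,k_2,k_1+k_2\}=k$, so part (1) of Proposition~\ref{prop:isotropic-mixed-embeddings} gives
\[
f\in H_{\operatorname{mix}}^{(k_1,k-k_1)}(\mathbb T^2).
\]

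\emph{Step 2 (the two families).} Both families come from \eqref{eq:variable-real-intermediate-regularity}, applied with $q=k_1\geq0$, so that $|q|=q$.
\begin{itemize}
\item Taking $k_1=0$ and $\theta=\sigma\in[0,1]$ gives $u\in H_{\operatorname{mix}}^{(\sigma,k+1-r-\sigma)}(\mathbb T^2)$. This is the first line of \eqref{eq:mixed-consequence-isotropic-data}.
\item For $\sigma\in[1,k+1]$, take $k_1=\sigma-1\in[0,k]$ and $\theta=1$. The second index becomes
\[
(k-k_1)+1-r-k_1-1=k+2-r-2\sigma.
\]
This is the second line of \eqref{eq:mixed-consequence-isotropic-data}.
\end{itemize}

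\emph{Step 3 (isotropic regularity).} By part (2) of Proposition~\ref{prop:isotropic-mixed-embeddings}, $u\in H^{\rho}(\mathbb T^2)$ whenever $\rho\leq\min\{s_1,s_2,s_1+s_2\}$ for some admissible pair $(s_1,s_2)$. It therefore suffices to exhibit, in each regime, a value of $\sigma$ attaining $\rho_{\operatorname{mix}}(k,r)$. Since only membership is claimed, optimality over all $\sigma$ need not be proved.
\begin{itemize}
\item If $k-r\leq-1$, take $\sigma=0$ in the first family. The pair is $(0,k+1-r)$ with $k+1-r\leq0$, so the minimum is $k+1-r$.
\item If $-1\leq k-r\leq1$, take $\sigma=(k+1-r)/2$, which lies in $[0,1]$. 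Then $s_1=s_2=(k+1-r)/2\geq0$, so the minimum is $(k+1-r)/2$.
\item If $k-r\geq1$, take $\sigma=(k+2-r)/3$ in the second family. This $\sigma$ is at least $1$ exactly when $k-r\geq1$, and it is at most $k+1$ because $r>\mu(a_0)\geq2$. Then $s_1=s_2=(k+2-r)/3\geq1>0$, so the minimum is $(k+2-r)/3$.
\end{itemize}

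The only real care needed is in Step 3. In each case one must check that the chosen $\sigma$ lies in the allowed range, and that both indices are nonnegative, so that $s_1+s_2$ does not become the minimum. The regime boundaries $k-r=\pm1$ are exactly where these range conditions switch. Note that the formula for $\rho_{\operatorname{mix}}$ agrees at $k-r=\pm1$, so the piecewise definition is consistent.
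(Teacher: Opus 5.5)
Your proposal is correct and follows essentially the same route as the paper: the embedding \(H^k(\mathbb T^2)\hookrightarrow H_{\operatorname{mix}}^{(p,k-p)}(\mathbb T^2)\), Theorem~\ref{theo_regularity_real} with \(q=p=\max\{0,\sigma-1\}\), and part (2) of Proposition~\ref{prop:isotropic-mixed-embeddings} to return to the isotropic scale. The only difference is that you exhibit the balancing value of \(\sigma\) in each regime instead of computing \(\max\rho_1\) and \(\max\rho_2\) and comparing them, and you omit the paper's argument that these parameter choices are optimal; this is legitimate because the statement asserts only membership.
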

	
	\begin{proof}
		Let \(r>\mu(a_0)\). For every \(0\leq p\leq k\),
		Proposition~\ref{prop:isotropic-mixed-embeddings} gives
		\(H^k(\mathbb T^2) \hookrightarrow
		H_{\operatorname{mix}}^{(p,k-p)}(\mathbb T^2)\). 
		Hence Theorem~\ref{theo_regularity_real}, applied with
		\(q\leq p\) and \(0\leq\theta\leq1\), yields
		\begin{equation}\label{eq:isotropic-data-general-mixed-pair}
			u\in
			H_{\operatorname{mix}}^{
				(q+\theta,\,
				k-p+1-r-|q|-\theta)}
			(\mathbb T^2).
		\end{equation}
		
		If \(q<0\), the pair in
		\eqref{eq:isotropic-data-general-mixed-pair} is componentwise
		dominated by the one obtained with \(q=0\), with \(p\) and
		\(\theta\) fixed. Thus it suffices to consider \(0\leq q\leq p\).
		
		Set \(\sigma=q+\theta\). The second index then becomes
		\(k+1-r-\sigma-p\).
		
		For fixed \(\sigma\), this is maximized by minimizing \(p\).
		Since \(p\geq q\) and \(0\leq\sigma-q\leq1\), the optimal choice is
		\[
		p=q=\max\{0,\sigma-1\}.
		\]
		
		Therefore
		\[
		u\in
		H_{\operatorname{mix}}^{
			(\sigma,k+1-r-\sigma)}(\mathbb T^2),
		\qquad
		0\leq\sigma\leq1,
		\]
		and
		\[
		u\in
		H_{\operatorname{mix}}^{
			(\sigma,k+2-r-2\sigma)}(\mathbb T^2),
		\qquad
		1\leq\sigma\leq k+1.
		\]
		
		This proves \eqref{eq:mixed-consequence-isotropic-data}.
		
		It remains to optimize the isotropic order. Since
		\(s_1=\sigma\geq0\), Proposition~
		\ref{prop:isotropic-mixed-embeddings} gives the isotropic index
		\(\min\{s_1,s_2\}\). Set \(m\coloneq k-r\). The two families lead
		to
		\[
		\rho_1(\sigma)
		=
		\min\{\sigma,m+1-\sigma\},
		\qquad
		0\leq\sigma\leq1,
		\]
		and
		\[
		\rho_2(\sigma)
		=
		\min\{\sigma,m+2-2\sigma\},
		\qquad
		1\leq\sigma\leq k+1.
		\]
		
		Elementary inspection gives
		\[
		\max\rho_1=
		\begin{cases}
			m+1, & m\leq-1,\\[0.3em]
			\dfrac{m+1}{2}, & -1\leq m\leq1,\\[0.6em]
			1, & m\geq1,
		\end{cases}
		\]
		whereas
		\[
		\max\rho_2=
		\begin{cases}
			m, & m\leq1,\\[0.3em]
			\dfrac{m+2}{3}, & m\geq1.
		\end{cases}
		\]
		Comparing these values and recalling that \(m=k-r\) yields
		\eqref{eq:isotropic-variable-regularity-index}.
	\end{proof}
	
	\begin{remark}
		The index \(\rho_{\operatorname{mix}}(k,r)\) is the largest
		isotropic order obtainable from the direct mixed smoothness
		estimates in \eqref{eq:mixed-consequence-isotropic-data} through
		Proposition~\ref{prop:isotropic-mixed-embeddings}. The normal-form
		conjugation developed in Section~\ref{sec:normal-form} yields the
		stronger isotropic conclusion
		\[
		u\in H^{k+1-r}(\mathbb T^2).
		\]
		Thus the direct mixed approach retains directional regularity
		information, but is not optimal after passing to the isotropic
		Sobolev scale.
	\end{remark}
	
	%=================================================
	\subsection{Classical regularity} \
	%=================================================
	
	We recall that \(L=\partial_t-a(t)\partial_x\), with
	\(a\in C^\infty(\mathbb T;\mathbb R)\), and denote the average of
	\(a\) by
	\[
	a_0\coloneq\frac{1}{2\pi}\int_0^{2\pi}a(t)\,dt.
	\]
	
	The preceding mixed smoothness estimates yield the following
	classical regularity consequence.
	
	\begin{corollary}\label{cor:variable-real-classical-regularity}
		Assume that
		\(a_0\in\mathbb R\setminus\mathbb Q\) has finite irrationality
		measure. Let \(k>\mu(a_0)\), and suppose that
		\(u\in\mathcal D'(\mathbb T^2)\) is a solution of \(Lu=f\), with
		\(f\in H^k(\mathbb T^2)\).
		Then \(u\in C^m(\mathbb T^2)\), where
		\[
		m
		\coloneq
		\left\lceil
		\frac{k-\mu(a_0)+\frac12}{3}
		\right\rceil-1.
		\]
	\end{corollary}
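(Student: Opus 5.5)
The plan is to combine the mixed smoothness family of Corollary~\ref{coro_regularity_variable_real} with the mixed Sobolev embedding of Corollary~\ref{cor:mixed-Cm-embedding}. The latter gives \(u\in C^m(\mathbb T^2)\) as soon as \(u\in H_{\operatorname{mix}}^{(s_1,s_2)}(\mathbb T^2)\) with \(s_1>m+\frac12\) and \(s_2>m+\frac12\). It therefore suffices to pick \(r>\mu(a_0)\) and an admissible \(\sigma\) in \eqref{eq:mixed-consequence-isotropic-data} so that both indices exceed \(m+\frac12\). The hypothesis \(f\in H^k(\mathbb T^2)\) with \(k>\mu(a_0)>0\) puts us in the range \(k\geq0\) required by that corollary.

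First I would unpack the definition of \(m\). Since \(\lceil x\rceil-1\) is the largest integer strictly smaller than \(x\), the definition says exactly that \(m\) is the largest integer with \(3m-\frac12<k-\mu(a_0)\). Moreover \(k>\mu(a_0)\) gives \(x=(k-\mu(a_0)+\frac12)/3>\frac16>0\), so \(m\geq0\). Because the inequality \(3m-\frac12<k-\mu(a_0)\) is strict, I can choose \(r>\mu(a_0)\) and \(\varepsilon>0\) so small that
\[
3m-\tfrac12+2\varepsilon<k-r
\qquad\text{and}\qquad
\varepsilon<k-r .
\]

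Next I would split into two cases according to which family in \eqref{eq:mixed-consequence-isotropic-data} is used. If \(m=0\), I take the first family with \(\sigma=\frac12+\varepsilon\le1\) (shrinking \(\varepsilon\) if needed). Then \(s_1>\frac12\), and \(s_2=k+\frac12-r-\varepsilon>\frac12\) because \(k-r>\varepsilon\). If \(m\geq1\), I take the second family with \(\sigma=m+\frac12+\varepsilon\geq1\). Then \(s_1>m+\frac12\), and
\[
s_2=k+2-r-2\sigma=k+1-r-2m-2\varepsilon>m+\tfrac12
\quad\Longleftrightarrow\quad
k-r>3m-\tfrac12+2\varepsilon,
\]
which holds by the choice of \(r\) and \(\varepsilon\).

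The only point that needs care is the admissibility constraint \(\sigma\le k+1\) in the second family. This follows from \(3m<k-r+\frac12\), which gives \(m<k\) when \(m\geq1\), hence \(\sigma=m+\frac12+\varepsilon\le k+1\) for small \(\varepsilon\). With both indices exceeding \(m+\frac12\) in each case, Corollary~\ref{cor:mixed-Cm-embedding} yields \(u\in C^m(\mathbb T^2)\).
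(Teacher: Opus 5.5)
Your proposal is correct and follows the paper's proof essentially step for step. You use the same split into $m=0$ and $m\geq1$, the same choices $\sigma=\frac12+\varepsilon$ and $\sigma=m+\frac12+\varepsilon$ in the two families of Corollary~\ref{coro_regularity_variable_real}, and the same final appeal to Corollary~\ref{cor:mixed-Cm-embedding}; your explicit check that $\sigma\le k+1$ (via $m<k-r<k$ for $m\geq1$) fills in a point the paper only asserts.
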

	
	\begin{proof}
		By the definition of \(m\),
		\begin{equation}\label{eq:classical-m-condition}
			3m-\frac12<k-\mu(a_0).
		\end{equation}
		
		We first consider \(m=0\). Choose
		\(\mu(a_0)<r<k\) and then \(\varepsilon>0\) sufficiently small
		so that \(\varepsilon<\min\left\{k-r, 1/2\right\}\).
		
		By the first family in
		\eqref{eq:mixed-consequence-isotropic-data}, with
		\(\sigma=\frac12+\varepsilon\), we have 
		\[
		u\in H_{\operatorname{mix}}^{
			\left(1/2+\varepsilon, k+1/2-r-\varepsilon \right)}
		(\mathbb T^2).
		\]
		
		Both indices are strictly larger than \(1/2\). Hence
		Corollary~\ref{cor:mixed-Cm-embedding} gives
		\(u\in C^0(\mathbb T^2)\).
		
		Now suppose that \(m\geq1\). By
		\eqref{eq:classical-m-condition}, we may choose
		\(r>\mu(a_0)\) sufficiently close to \(\mu(a_0)\), and then
		\(\varepsilon>0\) sufficiently small, so that
		\[
		3m-\frac12+2\varepsilon<k-r.
		\]
		
		Set \(\sigma\coloneq m+1/2+\varepsilon\). 
		Then \(1\leq\sigma\leq k+1\), and the second family in
		\eqref{eq:mixed-consequence-isotropic-data} gives
		\[
		u\in
		H_{\operatorname{mix}}^{
			\left(
			m+\frac12+\varepsilon,\,
			k+1-r-2m-2\varepsilon
			\right)}
		(\mathbb T^2).
		\]
		
		The choice of \(r\) and \(\varepsilon\) implies
		\(k+1-r-2m-2\varepsilon > m+1/2\). 
		Thus both mixed smoothness indices are strictly larger than
		\(m+\frac12\), and
		Corollary~\ref{cor:mixed-Cm-embedding} yields
		\(u\in C^m(\mathbb T^2)\).
	\end{proof}
	
	\begin{corollary}\label{cor:variable-real-classical-existence}
		Let \(k\geq0\), let \(f\in H^k(\mathbb T^2)\) satisfy
		\(\widehat f(0,0)=0\), and suppose that
		\(a_0\in\mathbb R\setminus\mathbb Q\) has finite irrationality
		measure. In particular,  when \(k\in\mathbb N_0\), the hypothesis
		\(f\in H^k(\mathbb T^2)\) is satisfied by every
		\(f\in C^k(\mathbb T^2)\).
		
		Then there exists a unique distributional solution
		\(u\in\mathcal D'(\mathbb T^2)\) of \(Lu=f\) satisfying
		\(\widehat u(0,0)=0\).
		
		If, in addition, \(k>\mu(a_0)\), then \(u\in C^m(\mathbb T^2)\), where
		\[
		m \coloneq
		\left\lceil
		\frac{k-\mu(a_0)+\frac12}{3}
		\right\rceil-1.
		\]
	\end{corollary}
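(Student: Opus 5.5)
The plan is to reduce the statement to two earlier results: Corollary~\ref{coro_existence_real} for existence and uniqueness, and Corollary~\ref{cor:variable-real-classical-regularity} for the classical regularity.

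\emph{Existence and uniqueness.} The first step is to place the data in the mixed scale. Since \(k\geq0\), Proposition~\ref{prop:isotropic-mixed-embeddings} (with \(s_1=0\), \(s_2=k\), so that \(\max\{s_1,s_2,s_1+s_2\}=k\)) gives
\[
H^k(\mathbb T^2)\hookrightarrow H_{\operatorname{mix}}^{(0,k)}(\mathbb T^2).
\]
Hence \(f\in H_{\operatorname{mix}}^{(0,k)}(\mathbb T^2)\) and \(\widehat f(0,0)=0\). Corollary~\ref{coro_existence_real}, applied with \(k_1=0\) and \(k_2=k\), then produces a unique distributional solution \(u\) of \(Lu=f\) with \(\widehat u(0,0)=0\). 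Uniqueness in \(\mathcal D'(\mathbb T^2)\) is part of that corollary, because its uniqueness argument uses only \(Lw=0\) and \(\widehat w(0,0)=0\), with no regularity assumption on \(w\).

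\emph{The remark on \(C^k\) data.} For \(k\in\mathbb N_0\), every \(f\in C^k(\mathbb T^2)\) has all derivatives \(\partial^\alpha f\) with \(|\alpha|\leq k\) continuous, hence in \(L^2(\mathbb T^2)\). By Plancherel,
\[
\sum_{(\tau,\xi)\in\mathbb Z^2}\langle(\tau,\xi)\rangle^{2k}|\widehat f(\tau,\xi)|^2
\lesssim_k \sum_{|\alpha|\leq k}\|\partial^\alpha f\|_{L^2(\mathbb T^2)}^2<\infty,
\]
using \(\langle(\tau,\xi)\rangle^{2k}\lesssim_k 1+\tau^{2k}+\xi^{2k}\). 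Therefore \(C^k(\mathbb T^2)\subset H^k(\mathbb T^2)\).

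\emph{Classical regularity.} When \(k>\mu(a_0)\), the solution \(u\) just constructed satisfies the hypotheses of Corollary~\ref{cor:variable-real-classical-regularity}, which gives \(u\in C^m(\mathbb T^2)\) with the stated \(m\). In fact, that corollary applies to every solution, so this step holds for any distributional solution of \(Lu=f\).

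No step is a genuine obstacle. The only point requiring care is the choice of mixed indices \((0,k)\), which lets Corollary~\ref{coro_existence_real} apply with \(k_1\geq0\).
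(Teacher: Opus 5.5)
Your proposal is correct and follows the paper's proof: you embed \(H^k(\mathbb T^2)\hookrightarrow H_{\operatorname{mix}}^{(0,k)}(\mathbb T^2)\), apply Corollary~\ref{coro_existence_real} for existence and uniqueness, and then apply Corollary~\ref{cor:variable-real-classical-regularity} for the \(C^m\) conclusion. Your Plancherel check that \(C^k(\mathbb T^2)\subset H^k(\mathbb T^2)\) is a small addition that the paper leaves implicit.
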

	
	\begin{proof}
		Since \(k\geq0\),   
		\(H^k(\mathbb T^2) \hookrightarrow
		H_{\operatorname{mix}}^{(0,k)}(\mathbb T^2)\).
		Thus Corollary~\ref{coro_existence_real} gives the unique
		distributional solution satisfying \(\widehat u(0,0)=0\).
		If \(k>\mu(a_0)\), Corollary~
		\ref{cor:variable-real-classical-regularity} gives
		\(u\in C^m(\mathbb T^2)\).
	\end{proof}

	%==========================================
	%==========================================
	\section{Normal-Form Reduction: Mixed and Isotropic Regularity}
	\label{sec:normal-form}
	%==========================================
	%==========================================
	
	We now consider the real variable-coefficient vector field
	\[
	L\coloneq\partial_t-a(t)\partial_x,
	\qquad
	a\in C^\infty(\mathbb T;\mathbb R),
	\]
	from a different point of view. Let
	\[
	a_0\coloneq\frac{1}{2\pi}\int_0^{2\pi}a(t)\,dt,
	\ \text{ and } \
	L_0\coloneq\partial_t-a_0\partial_x.
	\]
	
	In the preceding section, regularity estimates were obtained directly
	from the periodic mode equations. Here we instead conjugate \(L\) to
	the constant-coefficient vector field \(L_0\).
	
	Such normal-form reductions are standard in the study of vector
	fields on tori; see, for instance,
	\cite{AriKirMed19_jmaa,DicGraYos2002_pems,Petr2011_tams,
		BerDatGon2017-jfaa}. Related procedures for vector fields on compact
	Lie groups and for other classes of operators can be found in
	\cite{KMR2020_bsm,KMR2021_jfa,Kiri_Kow_Wagn_ToroEsfera,
		AviGonKirMed19_jfaa,AviGraKir18_jam}.
	
	The conjugation will be used in two complementary ways. In the mixed
	smoothness scale, we quantify the loss in the \(x\)-direction produced
	by the conjugating operators and compare the resulting estimates with
	the direct method of the preceding section. In the isotropic Sobolev
	scale, the conjugation preserves the Sobolev order and allows us to
	transfer regularity and solvability results from \(L_0\) to \(L\).

	%============================================
	\subsection{The normal-form conjugation} \
	%============================================
	
	Set
	\begin{equation*}\label{eq:normal-form-phase}
		A(t) \coloneq \int_0^t\bigl(a(\sigma)-a_0\bigr)\,d\sigma.
	\end{equation*}
	
	Since \(a-a_0\) has zero mean, \(A\) is \(2\pi\)-periodic.
	
	\begin{proposition}\label{prop:normal-form-conjugation}
		For \(u\in\mathcal D'(\mathbb T^2)\), set
		\[
		(\Psi_{\pm A}u)(t,x)
		\coloneq
		u(t,x\mp A(t)),
		\]
		where the expression is understood as pullback by the corresponding
		smooth diffeomorphism. Then \(\Psi_A\) and \(\Psi_{-A}\) are mutually
		inverse continuous automorphisms of
		\(\mathcal D'(\mathbb T^2)\) and of
		\(C^\infty(\mathbb T^2)\).
		
		Moreover,
		\begin{equation}\label{eq:normal-form-intertwining}
			\Psi_A L=L_0\Psi_A,
			\qquad
			\Psi_{-A}L_0=L\Psi_{-A},
		\end{equation}
		and, for every \(\xi\in\mathbb Z\),
		\begin{equation}\label{eq:conjugation-partial-Fourier}
			\widehat{\Psi_{\pm A}u}(t,\xi)
			=
			e^{\mp i\xi A(t)}
			\widehat u(t,\xi).
		\end{equation}
	\end{proposition}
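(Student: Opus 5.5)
The argument treats \(\Psi_{\pm A}\) as pullbacks by explicit diffeomorphisms of \(\mathbb T^2\) and verifies the three assertions in order: the automorphism property, the intertwining identity, and the partial Fourier formula.

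\textbf{Automorphism property.} Define \(\phi_{\pm}(t,x)\coloneq(t,x\mp A(t))\). Since \(A\) is smooth and \(2\pi\)-periodic, \(\phi_\pm\) descends to a smooth map of \(\mathbb T^2\). Moreover \(\phi_+\circ\phi_-=\phi_-\circ\phi_+=\mathrm{id}\), so both maps are diffeomorphisms, and each has Jacobian determinant \(1\).

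Pullback by a diffeomorphism is a continuous automorphism of \(C^\infty(\mathbb T^2)\), by the chain rule and the compactness of \(\mathbb T^2\). It extends to \(\mathcal D'(\mathbb T^2)\) by duality:
\[
\langle\Psi_{\pm A}u,\varphi\rangle\coloneq\langle u,\Psi_{\mp A}\varphi\rangle .
\]
The unit Jacobian makes this consistent with the smooth case, via the change of variables \(x\mapsto x\pm A(t)\). Continuity in the weak-\(*\) sense, and for the strong topology, follows from continuity on test functions. Finally, \(\Psi_A\Psi_{-A}=\Psi_{-A}\Psi_A=I\) holds on smooth functions and therefore, by density or by duality, on distributions.

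\textbf{Intertwining.} First take \(u\) smooth and put \(w=\Psi_A u\), so \(w(t,x)=u(t,x-A(t))\). The chain rule gives
\[
\partial_t w=(\partial_t u)(t,x-A)-A'(t)(\partial_x u)(t,x-A),\qquad \partial_x w=(\partial_x u)(t,x-A).
\]
Using \(A'=a-a_0\), this becomes
\[
L_0w=\bigl(\partial_tu-a(t)\partial_xu\bigr)(t,x-A)=\Psi_A Lu .
\]
Both sides of \(\Psi_AL=L_0\Psi_A\) are continuous on \(\mathcal D'(\mathbb T^2)\): \(L\) and \(L_0\) are differential operators with smooth coefficients, and \(\Psi_A\) is continuous. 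Density of \(C^\infty\) in \(\mathcal D'\) therefore extends the identity to all distributions. Alternatively, one can verify it directly by duality, using the formal transposes together with the unit Jacobian. Composing the identity on the left and on the right with \(\Psi_{-A}\) yields \(\Psi_{-A}L_0=L\Psi_{-A}\).

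\textbf{Partial Fourier formula.} For smooth \(u\), substitute \(y=x\mp A(t)\) in the \(x\)-integral; periodicity leaves the domain of integration unchanged. This gives
\[
\widehat{\Psi_{\pm A}u}(t,\xi)=\frac1{2\pi}\int_{\mathbb T}u(t,y)e^{-i(y\pm A(t))\xi}\,dy=e^{\mp i\xi A(t)}\widehat u(t,\xi).
\]
For general \(u\), pair both sides with a test function \(\psi\in C^\infty(\mathbb T)\) in \(t\). The partial coefficient is defined by testing \(u\) against \(\psi(t)e^{-ix\xi}/2\pi\). Applying the duality definition of \(\Psi_{\pm A}\) transforms this test function into \(\psi(t)e^{\mp i\xi A(t)}e^{-ix\xi}/2\pi\), which is exactly the formula, since multiplication by the smooth function \(e^{\mp i\xi A}\) is well defined on \(\mathcal D'(\mathbb T)\).

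\textbf{Main obstacle.} The only delicate point is bookkeeping: keeping the signs consistent in the duality definition for distributions, so that \(\Psi_{\pm A}\) extends consistently and the Fourier identity carries the correct sign \(e^{\mp i\xi A}\). All the analytic content lies in the smooth computations above.
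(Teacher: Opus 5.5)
Your proof is correct and follows the same route as the paper. You realize \(\Psi_{\pm A}\) as pullbacks by the mutually inverse unit-Jacobian diffeomorphisms \((t,x)\mapsto(t,x\mp A(t))\), check \(\Psi_A L=L_0\Psi_A\) on smooth functions via the chain rule and \(A'=a-a_0\) before extending by continuity, and get the partial Fourier formula from the translation rule in \(x\); you additionally write out the duality definition on \(\mathcal D'(\mathbb T^2)\) and the test-function verification for distributional partial coefficients, which the paper leaves implicit, and your sign bookkeeping there is right.
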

	
	\begin{proof}
		The maps
		\[
		\Phi_{\pm A}(t,x)\coloneq(t,x\mp A(t))
		\]
		are mutually inverse smooth diffeomorphisms of \(\mathbb T^2\).
		Hence their pullbacks are mutually inverse continuous
		automorphisms of \(\mathcal D'(\mathbb T^2)\) and
		\(C^\infty(\mathbb T^2)\). Their Jacobian determinants are equal
		to \(1\).
		
		For \(u\in C^\infty(\mathbb T^2)\), the chain rule and
		\(A'(t)=a(t)-a_0\) give
		\[
		L_0(\Psi_Au)
		=
		\Psi_A(Lu).
		\]
		Thus \(\Psi_A L=L_0\Psi_A\); the second identity in
		\eqref{eq:normal-form-intertwining} follows by applying
		\(\Psi_{-A}\). Both identities extend to distributions by
		continuity. Finally,
		\eqref{eq:conjugation-partial-Fourier} follows from the translation
		rule for Fourier coefficients in the \(x\)-variable.
	\end{proof}
	
	%============================================
	\subsection{Action on mixed smoothness spaces} \
	%============================================
	
	We next examine the action of \(\Psi_{\pm A}\) on Sobolev spaces of
	dominating mixed smoothness. By
	\eqref{eq:conjugation-partial-Fourier}, differentiation in \(t\) of
	the phase factor \(e^{\mp i\xi A(t)}\) produces powers of
	\(\langle\xi\rangle\). The following estimate quantifies the resulting
	loss in the \(x\)-regularity.
	
	\begin{lemma}\label{lem:normal-form-phase-multiplier}
		Let \(A\in C^\infty(\mathbb T;\mathbb R)\) and \(s\in\mathbb R\).
		Then, for every \(\xi\in\mathbb Z\) and every
		\(h\in H^s(\mathbb T)\),
		\begin{equation*}\label{eq:normal-form-phase-multiplier}
			\|e^{\pm i\xi A}h\|_{H^s(\mathbb T)}
			\lesssim_{A,s}
			\langle\xi\rangle^{|s|}
			\|h\|_{H^s(\mathbb T)}.
		\end{equation*}
	\end{lemma}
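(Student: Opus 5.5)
The plan is to prove the estimate first for nonnegative integer orders by the Leibniz rule, then extend to all real \(s\ge0\) by interpolation, and finally to negative \(s\) by duality. This is the same scheme as in Lemma~\ref{lem:variable-real-mode-Sobolev-estimate}. Throughout, write \(M_\xi h\coloneq e^{\pm i\xi A}h\). Since \(A\) is real-valued, \(|e^{\pm i\xi A}|=1\), so \(M_\xi\) is an isometry of \(L^2(\mathbb T)\). This gives the case \(s=0\) with constant \(1\).

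\textbf{Integer orders.} Fix \(m\in\mathbb N\) and use the equivalent norm \(\|h\|_{H^m}\asymp_m\sum_{j\le m}\|\partial_t^jh\|_{L^2}\). By Leibniz's rule,
\[
\partial_t^j(e^{\pm i\xi A}h)=\sum_{l\le j}\binom{j}{l}\partial_t^{j-l}\bigl(e^{\pm i\xi A}\bigr)\,\partial_t^lh.
\]
By Faà di Bruno's formula, \(\partial_t^{k}(e^{\pm i\xi A})\) is \(e^{\pm i\xi A}\) times a polynomial in \(\xi\) of degree at most \(k\). Its coefficients are products of derivatives of \(A\), which are bounded on \(\mathbb T\). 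Hence
\[
\|\partial_t^{k}(e^{\pm i\xi A})\|_{L^\infty}\lesssim_{A,k}\langle\xi\rangle^{k}.
\]
Summing over \(j\le m\) gives \(\|M_\xi h\|_{H^m}\lesssim_{A,m}\langle\xi\rangle^{m}\|h\|_{H^m}\), uniformly in \(\xi\in\mathbb Z\).

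\textbf{Real \(s\ge0\).} For \(m\le s\le m+1\), complex interpolation between the bounds at \(H^m\) and \(H^{m+1}\) applies, since \([H^m,H^{m+1}]_\vartheta=H^{m+\vartheta}\) with equivalent norms. It yields the operator bound \(C_{A,m}\langle\xi\rangle^{m(1-\vartheta)+(m+1)\vartheta}=C\langle\xi\rangle^{s}\). The interpolation constant depends only on \(m\) and not on \(\xi\), because the operator \(M_\xi\) is the same at both endpoints. This proves the lemma for \(s\ge0\), for both signs \(\pm\).

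\textbf{Negative \(s\).} For \(s=-\sigma<0\), the \(L^2\)-adjoint of \(M_\xi\) with the sign \(\pm\) is multiplication by \(e^{\mp i\xi A}\), which is covered by the case \(s\ge0\). Arguing exactly as in the duality step of Lemma~\ref{lem:variable-real-mode-Sobolev-estimate}, we get
\[
\|M_\xi h\|_{H^{-\sigma}}=\sup_{\varphi\ne0}\frac{|\langle h,e^{\mp i\xi A}\varphi\rangle|}{\|\varphi\|_{H^\sigma}}\le \|h\|_{H^{-\sigma}}\sup_{\varphi\ne0}\frac{\|e^{\mp i\xi A}\varphi\|_{H^\sigma}}{\|\varphi\|_{H^\sigma}}\lesssim_{A,\sigma}\langle\xi\rangle^{\sigma}\|h\|_{H^{-\sigma}}.
\]
This is first shown for smooth \(h\) and then extended by density.

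The only step requiring care is the bookkeeping in the integer case: the top power \(\langle\xi\rangle^m\) comes from \((i\xi A')^m\), and every lower term carries at most that power. Everything else is routine.
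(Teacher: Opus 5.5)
Your proposal is correct and follows essentially the same route as the paper's proof. The paper likewise proves integer orders by the Leibniz rule using $\|\partial_t^\ell e^{\pm i\xi A}\|_{L^\infty}\lesssim_{A,\ell}\langle\xi\rangle^\ell$, then interpolates between consecutive integer orders, and treats negative orders by duality using that the adjoint of multiplication by $e^{\pm i\xi A}$ is multiplication by $e^{\mp i\xi A}$.

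One minor correction to your interpolation step: the $\xi$-independence of the constant does not come from the operator being the same at both endpoints (that is merely what lets interpolation apply at all). It comes from the bound $\|T\|_{\vartheta}\le\|T\|_{0}^{1-\vartheta}\|T\|_{1}^{\vartheta}$, combined with norm equivalences that depend only on $m$ and $\vartheta$. This does not affect the argument.
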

	
	\begin{proof}
		We first consider \(s=m\in\mathbb N_0\). For
		\(0\leq\ell\leq m\), repeated differentiation gives
		\[
		\left\|
		\partial_t^\ell e^{\pm i\xi A}
		\right\|_{L^\infty(\mathbb T)}
		\lesssim_{A,\ell}
		\langle\xi\rangle^\ell.
		\]
		
		Hence, by Leibniz's rule,
		\[
		\left\|
		\partial_t^j\bigl(e^{\pm i\xi A}h\bigr)
		\right\|_{L^2(\mathbb T)}
		\lesssim_{A,m}
		\langle\xi\rangle^m
		\|h\|_{H^m(\mathbb T)},
		\qquad
		0\leq j\leq m.
		\]
		
		Summing over \(j\) yields
		\begin{equation}\label{eq:phase-multiplier-integer}
			\|e^{\pm i\xi A}h\|_{H^m(\mathbb T)}
			\lesssim_{A,m}
			\langle\xi\rangle^m
			\|h\|_{H^m(\mathbb T)}.
		\end{equation}
		
		Now let \(s=m+\vartheta\geq0\), where
		\(m\in\mathbb N_0\) and \(0\leq\vartheta<1\).
		Interpolating \eqref{eq:phase-multiplier-integer} at orders
		\(m\) and \(m+1\) in the Sobolev scale, we obtain
		\[
		\|e^{\pm i\xi A}h\|_{H^s(\mathbb T)}
		\lesssim_{A,s}
		\langle\xi\rangle^{
			m(1-\vartheta)+(m+1)\vartheta}
		\|h\|_{H^s(\mathbb T)}
		=
		\langle\xi\rangle^s
		\|h\|_{H^s(\mathbb T)}.
		\]
		Finally, let \(s=-\sigma<0\). Since the \(L^2\)-adjoint of
		multiplication by \(e^{\pm i\xi A}\) is multiplication by
		\(e^{\mp i\xi A}\), the estimate already proved at order
		\(\sigma\) gives, by duality,
		\[
		\|e^{\pm i\xi A}h\|_{H^{-\sigma}(\mathbb T)}
		=
		\sup_{\varphi\in H^\sigma(\mathbb T)\setminus\{0\}}
		\frac{
			|\langle h,e^{\mp i\xi A}\varphi\rangle|
		}{
			\|\varphi\|_{H^\sigma(\mathbb T)}
		}
		\lesssim_{A,\sigma}
		\langle\xi\rangle^\sigma
		\|h\|_{H^{-\sigma}(\mathbb T)}.
		\]
		Since \(\sigma=|s|\), the result follows.
	\end{proof}
	
	\begin{proposition}\label{prop:conjugation-mixed-smoothness}
		Let \(s_1,s_2\in\mathbb R\). Then
		\[
		\Psi_{\pm A}\colon
		H_{\operatorname{mix}}^{(s_1,s_2)}(\mathbb T^2)
		\longrightarrow
		H_{\operatorname{mix}}^{
			(s_1,s_2-|s_1|)}(\mathbb T^2)
		\]
		is continuous. More precisely,
		\begin{equation}\label{eq:conjugation-mixed-smoothness}
			\|\Psi_{\pm A}u\|_{
				H_{\operatorname{mix}}^{
					(s_1,s_2-|s_1|)}(\mathbb T^2)}
			\lesssim_{A,s_1}
			\|u\|_{
				H_{\operatorname{mix}}^{(s_1,s_2)}
				(\mathbb T^2)}.
		\end{equation}
	\end{proposition}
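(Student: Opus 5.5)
The plan is to reduce the proposition to Lemma~\ref{lem:normal-form-phase-multiplier} one $x$-frequency at a time. The main tool is the partial Fourier description \eqref{eq:mixed-partial-characterization} of the mixed norm. I would first take $u$ to be a trigonometric polynomial. Then, by \eqref{eq:conjugation-partial-Fourier}, the partial Fourier coefficients of $\Psi_{\pm A}u$ are
\[
\widehat{\Psi_{\pm A}u}(\cdot,\xi)=e^{\mp i\xi A}\,\widehat u(\cdot,\xi).
\]
Each of these is a smooth function of $t$. It is nonzero for only finitely many $\xi$, since $\Psi_{\pm A}$ acts only in $t$ for each fixed $\xi$.

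The second step applies Lemma~\ref{lem:normal-form-phase-multiplier} with $s=s_1$ to each coefficient. This gives
\[
\|\widehat{\Psi_{\pm A}u}(\cdot,\xi)\|_{H^{s_1}(\mathbb T)}
\lesssim_{A,s_1}\langle\xi\rangle^{|s_1|}\|\widehat u(\cdot,\xi)\|_{H^{s_1}(\mathbb T)},
\]
with a constant independent of $\xi$. I would multiply by $\langle\xi\rangle^{s_2-|s_1|}$, square, sum over $\xi\in\mathbb Z$, and use \eqref{eq:mixed-partial-characterization} on both sides. This yields \eqref{eq:conjugation-mixed-smoothness} for trigonometric polynomials.

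The third step extends the estimate to general $u$. Trigonometric polynomials are dense in $H_{\operatorname{mix}}^{(s_1,s_2)}(\mathbb T^2)$, so the estimate gives a unique bounded extension into $H_{\operatorname{mix}}^{(s_1,s_2-|s_1|)}(\mathbb T^2)$. This extension must agree with $\Psi_{\pm A}$ itself. That holds because $\Psi_{\pm A}$ is continuous on $\mathcal D'(\mathbb T^2)$ by Proposition~\ref{prop:normal-form-conjugation}, and convergence in any mixed Sobolev space implies convergence in $\mathcal D'(\mathbb T^2)$.

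The main point needing care is the uniformity in $\xi$ of the constant in Lemma~\ref{lem:normal-form-phase-multiplier}. The lemma states the bound with a constant depending only on $A$ and $s$, and $\xi$ enters only through the factor $\langle\xi\rangle^{|s|}$, so the summation is legitimate. A minor related point is justifying \eqref{eq:conjugation-partial-Fourier} distributionally. This is only needed for smooth $u$, where it follows from the translation rule, and the density argument handles the rest.
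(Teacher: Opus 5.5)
Your proposal is correct and follows the paper's proof: apply Lemma~\ref{lem:normal-form-phase-multiplier} with $s=s_1$ to each partial Fourier coefficient given by \eqref{eq:conjugation-partial-Fourier}, multiply by $\langle\xi\rangle^{s_2-|s_1|}$, square, and sum using \eqref{eq:mixed-partial-characterization}. The only difference is your density step. The paper does without it by applying \eqref{eq:conjugation-partial-Fourier}, which is already stated for every $u\in\mathcal D'(\mathbb T^2)$, directly to $\widehat u(\cdot,\xi)\in H^{s_1}(\mathbb T)$, so your extension argument is valid but unnecessary.
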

	
	\begin{proof}
		By \eqref{eq:conjugation-partial-Fourier} and
		Lemma~\ref{lem:normal-form-phase-multiplier},
		\[
		\|\widehat{\Psi_{\pm A}u}(\cdot,\xi)\|_{H^{s_1}(\mathbb T)}
		\lesssim_{A,s_1}
		\langle\xi\rangle^{|s_1|}
		\|\widehat u(\cdot,\xi)\|_{H^{s_1}(\mathbb T)}.
		\]
		
		Multiplying by
		\(\langle\xi\rangle^{s_2-|s_1|}\), squaring, and summing over
		\(\xi\in\mathbb Z\), the conclusion follows from
		\eqref{eq:mixed-partial-characterization}.
	\end{proof}
	
	\begin{remark}
		The \(C^\infty\) assumption on \(a\), and hence on the associated
		phase \(A\), is imposed to avoid tracking the precise finite
		regularity assumptions required by the conjugation estimates.
		For each fixed Sobolev order \(s_1\), the proof of
		Proposition~\ref{prop:conjugation-mixed-smoothness} involves only
		finitely many derivatives of \(A\), and therefore only finitely many
		derivatives of \(a\). Corresponding finite-regularity versions can
		thus be formulated, although we do not pursue the optimal regularity
		assumptions here.
	\end{remark}

	%=================================================
	\subsection{Regularity estimates by conjugation} \
	%=================================================
	
	We now combine the normal-form reduction with the
	constant-coefficient result of Section~\ref{sec:constant-coefficients}.
	
	\begin{theorem}\label{theo:regularity-by-conjugation}
		Let \(a\in C^\infty(\mathbb T;\mathbb R)\), let
		\(k_1,k_2\in\mathbb R\), and suppose that
		\(u\in\mathcal D'(\mathbb T^2)\) is a solution of
		\(Lu=f\), with 
		\(f\in H_{\operatorname{mix}}^{(k_1,k_2)}(\mathbb T^2)\).
		
		Assume that \(a_0\in\mathbb R\setminus\mathbb Q\) has finite
		irrationality measure. Then, for every \(r>\mu(a_0)\) and every
		\(\theta\in[-r,1]\),
		\begin{equation*}\label{eq:regularity-by-conjugation}
			u\in
			H_{\operatorname{mix}}^{
				\left(
				k_1+\theta,\,
				k_2+1-r-\theta-|k_1|-|k_1+\theta|
				\right)}
			(\mathbb T^2).
		\end{equation*}
		
		Moreover,
		\begin{equation*}\label{eq:regularity-by-conjugation-estimate}
			\left\|
			u-\widehat u(0,0)
			\right\|_{
				H_{\operatorname{mix}}^{
					\left(
					k_1+\theta,\,
					k_2+1-r-\theta-|k_1|-|k_1+\theta|
					\right)}
				(\mathbb T^2)}
			\lesssim_{a,k_1,r,\theta}
			\|f\|_{
				H_{\operatorname{mix}}^{(k_1,k_2)}
				(\mathbb T^2)}.
		\end{equation*}
	\end{theorem}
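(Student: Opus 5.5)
The plan is to transport the equation to the averaged constant-coefficient field by \(\Psi_A\), solve there with Theorem~\ref{theo_regularity_cte}(ii), and pull back with \(\Psi_{-A}\). Proposition~\ref{prop:conjugation-mixed-smoothness} is applied once in each direction; each application costs an absolute value of the current \(t\)-order in the \(x\)-index.

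\emph{Forward conjugation.} Let \(w\coloneq\Psi_A u\) and \(g\coloneq\Psi_A f\). By \eqref{eq:normal-form-intertwining}, \(L_0w=g\). Proposition~\ref{prop:conjugation-mixed-smoothness} with \((s_1,s_2)=(k_1,k_2)\) gives
\[
g\in H_{\operatorname{mix}}^{(k_1,k_2-|k_1|)}(\mathbb T^2),
\qquad
\|g\|_{H_{\operatorname{mix}}^{(k_1,k_2-|k_1|)}}\lesssim_{A,k_1}\|f\|_{H_{\operatorname{mix}}^{(k_1,k_2)}}.
\]

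\emph{Constant-coefficient step.} Apply Theorem~\ref{theo_regularity_cte}(ii) to \(L_0=\partial_t-a_0\partial_x\) with data in \(H_{\operatorname{mix}}^{(k_1,k_2-|k_1|)}\). For \(r>\mu(a_0)\) and \(\theta\in[-r,1]\) this yields
\[
\|w-\widehat w(0,0)\|_{H_{\operatorname{mix}}^{(k_1+\theta,\,k_2-|k_1|+1-r-\theta)}}\lesssim_{a_0,r}\|g\|_{H_{\operatorname{mix}}^{(k_1,k_2-|k_1|)}}.
\]

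\emph{Backward conjugation.} Apply Proposition~\ref{prop:conjugation-mixed-smoothness} to \(\Psi_{-A}\) with \(s_1=k_1+\theta\). This loses a further \(|k_1+\theta|\) in the second index and lands in exactly the stated space.

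\emph{Handling the constant mode.} This is the only point needing care. We have \(\Psi_{-A}w=u\), and the constant \(\widehat w(0,0)\) is fixed by \(\Psi_{-A}\), since it is a constant function. Hence
\[
u-\widehat w(0,0)=\Psi_{-A}\bigl(w-\widehat w(0,0)\bigr).
\]
The estimate, however, concerns \(u-\widehat u(0,0)\). By \eqref{eq:conjugation-partial-Fourier}, \(\widehat u(t,0)=\widehat w(t,0)\). Taking the mean in \(t\) gives \(\widehat u(0,0)=\widehat w(0,0)\), so the two constants coincide.

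Chaining the three estimates then gives the asserted bound, with constants depending on \(a,k_1,r,\theta\). The regularity statement follows by adding back the constant.

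I expect no real obstacle. The key checks are that \(\Psi_{\pm A}\) commutes with subtraction of constants and preserves the zero mode, which follow from the partial Fourier formula as above.
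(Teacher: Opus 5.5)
Your proposal is correct and follows the paper's proof essentially step by step. You conjugate by \(\Psi_A\), apply Theorem~\ref{theo_regularity_cte}(ii) to \(L_0\) with data in \(H_{\operatorname{mix}}^{(k_1,k_2-|k_1|)}(\mathbb T^2)\), pull back with \(\Psi_{-A}\) at first order \(k_1+\theta\), and use that the conjugation fixes constants and preserves the zero mode. The only cosmetic difference is that you check \(\widehat u(0,0)=\widehat w(0,0)\) through the \(\xi=0\) case of \eqref{eq:conjugation-partial-Fourier}, whereas the paper appeals to the unit Jacobian of the underlying diffeomorphism.
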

	
	\begin{proof}
		Set
		\[
		g\coloneq\Psi_Af,
		\qquad
		w\coloneq\Psi_Au.
		\]
		
		By Proposition~\ref{prop:conjugation-mixed-smoothness},
		\(g\in H_{\operatorname{mix}}^{(k_1,k_2-|k_1|)}(\mathbb T^2)\), 
		and the intertwining identity gives
		\(L_0w=g\).
		
		Hence Theorem~\ref{theo_regularity_cte}\textup{(ii)} yields
		\[
		w\in
		H_{\operatorname{mix}}^{
			(k_1+\theta,\,
			k_2+1-r-\theta-|k_1|)}
		(\mathbb T^2).
		\]
		
		Since \(u=\Psi_{-A}w\), another application of
		Proposition~\ref{prop:conjugation-mixed-smoothness}, now at first
		order \(k_1+\theta\), gives
		\eqref{eq:regularity-by-conjugation}.
		
		For the quantitative estimate, the Jacobian determinant of the
		diffeomorphisms defining \(\Psi_{\pm A}\) is equal to \(1\), so
		\[
		\widehat{\Psi_{\pm A}v}(0,0)=\widehat v(0,0).
		\]
		
		Moreover, \(\Psi_{\pm A}\) preserve constants. Therefore
		\[
		u-\widehat u(0,0)
		=
		\Psi_{-A}
		\bigl(w-\widehat w(0,0)\bigr).
		\]
		
		Applying successively
		Proposition~\ref{prop:conjugation-mixed-smoothness},
		Theorem~\ref{theo_regularity_cte}\textup{(ii)}, and again
		Proposition~\ref{prop:conjugation-mixed-smoothness} gives
		\eqref{eq:regularity-by-conjugation-estimate}.
	\end{proof}
	
	\begin{remark}\label{rem:direct-versus-conjugation}
		The mixed smoothness estimates obtained by conjugation are contained
		in the admissible region furnished by the direct modal analysis of
		Theorem~\ref{theo_regularity_real}; in general, the inclusion is
		strict. For example, let \(k_1=0\) and \(0<\alpha\leq1\).
		Theorem~\ref{theo:regularity-by-conjugation} gives
		\[
		u\in
		H_{\operatorname{mix}}^{
			(\alpha,k_2+1-r-2\alpha)}(\mathbb T^2),
		\]
		whereas Theorem~\ref{theo_regularity_real}, applied with
		\(q=0\), gives
		\[
		u\in
		H_{\operatorname{mix}}^{
			(\alpha,k_2+1-r-\alpha)}(\mathbb T^2).
		\]
		Thus the conjugation may introduce an additional loss in the mixed
		smoothness scale. Its advantage appears instead in the isotropic
		Sobolev scale, where the conjugating operators preserve the Sobolev
		order.
	\end{remark}

	%=================================================
	\subsection{Isotropic regularity, existence, and classical consequences} \
	%=================================================
	
	The loss produced by the conjugation in the preceding subsection is
	specific to the mixed smoothness scale. In contrast, the normal-form
	conjugation preserves the order of isotropic Sobolev regularity.
	
	\begin{proposition}\label{prop:isotropic-conjugation}
		Let \(A\in C^\infty(\mathbb T;\mathbb R)\). For every
		\(s\in\mathbb R\), the operators \(\Psi_A\) and \(\Psi_{-A}\) are
		mutually inverse bounded automorphisms of \(H^s(\mathbb T^2)\).
	\end{proposition}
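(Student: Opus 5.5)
The plan is to prove the bound first for \(s=m\in\mathbb N_0\) by the chain rule, extend to \(s\geq0\) by interpolation, and then treat \(s<0\) by duality. Since \(\Psi_{-A}\) is of the same form as \(\Psi_A\) with \(A\) replaced by \(-A\), it suffices to prove
\[
\|\Psi_{\pm A}u\|_{H^s(\mathbb T^2)}\lesssim_{A,s}\|u\|_{H^s(\mathbb T^2)}
\qquad\text{for } u\in C^\infty(\mathbb T^2).
\]
The density of trigonometric polynomials then extends each operator to a bounded operator on \(H^s(\mathbb T^2)\). By Proposition~\ref{prop:normal-form-conjugation}, this extension agrees with the distributional pullback and \(\Psi_A\Psi_{-A}=\Psi_{-A}\Psi_A=\mathrm{Id}\), which gives the automorphism statement.

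\textbf{Nonnegative integer orders.} For \(s=m\in\mathbb N_0\), use the equivalent norm
\[
\|u\|_{H^m(\mathbb T^2)}^2\asymp\sum_{j+\ell\leq m}\|\partial_t^j\partial_x^\ell u\|_{L^2}^2 .
\]
Write \(\Phi_{\pm A}(t,x)=(t,x\mp A(t))\). The chain rule gives
\[
\partial_x(u\circ\Phi)=(\partial_xu)\circ\Phi,
\qquad
\partial_t(u\circ\Phi)=(\partial_tu)\circ\Phi\mp A'(t)(\partial_xu)\circ\Phi .
\]
By induction, \(\partial_t^j\partial_x^\ell(u\circ\Phi)\) is a finite sum of terms \(c(t)\,(\partial_t^{j'}\partial_x^{\ell'}u)\circ\Phi\). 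Here \(j'+\ell'\leq j+\ell\), and each coefficient \(c\) is a polynomial in \(A',\dots,A^{(j)}\), hence bounded. The Jacobian of \(\Phi_{\pm A}\) equals \(1\), so composition with \(\Phi\) is an isometry of \(L^2(\mathbb T^2)\). Summing these terms gives \(\|\Psi_{\pm A}u\|_{H^m}\lesssim_{A,m}\|u\|_{H^m}\).

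This is the key contrast with Lemma~\ref{lem:normal-form-phase-multiplier}. There the factor \(\langle\xi\rangle\) produced by \(\partial_t e^{\mp i\xi A}\) was charged against \(t\)-regularity. In the isotropic norm the same factor is simply the derivative \(\partial_x u\), which has the same total order and so costs nothing.

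\textbf{Real orders.} For real \(s\geq0\), I would interpolate between consecutive integer orders \(m\) and \(m+1\), using \([H^m,H^{m+1}]_\vartheta=H^{m+\vartheta}\). This holds because these are weighted \(L^2\) spaces, as recalled before Proposition~\ref{prop:isotropic-mixed-embeddings} for the mixed scale; the same argument applies to the isotropic weights.

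For \(s=-\sigma<0\), I would argue by duality. A change of variables with unit Jacobian shows \(\langle\Psi_{\pm A}u,\varphi\rangle_0=\langle u,\Psi_{\mp A}\varphi\rangle_0\), where \(\langle\cdot,\cdot\rangle_0\) is (up to the factor \((2\pi)^2\)) the \(L^2\) pairing. Hence
\[
\|\Psi_{\pm A}u\|_{H^{-\sigma}}
=\sup_{\varphi\neq0}\frac{|\langle u,\Psi_{\mp A}\varphi\rangle_0|}{\|\varphi\|_{H^\sigma}}
\lesssim\|u\|_{H^{-\sigma}},
\]
using the bound already proved at order \(\sigma\) applied to \(\Psi_{\mp A}\).

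\textbf{Expected obstacle.} The main point needing care is the integer-order induction. One must check that no term involves more than \(j+\ell\) total derivatives of \(u\), and that every coefficient depends only on \(t\) through derivatives of \(A\), so that it stays bounded independently of the frequency. The remaining steps are routine.
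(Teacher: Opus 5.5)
Your proposal is correct and follows essentially the same route as the paper. Both arguments obtain the integer-order bound from the chain rule and the fact that \(\Phi_{\pm A}\) preserves measure, then combine duality through \(\Psi_{\pm A}^*=\Psi_{\mp A}\) with complex interpolation. The only difference is the ordering: you interpolate on \(s\geq0\) and then dualize, while the paper dualizes at negative integer orders and then interpolates over all of \(\mathbb R\), and both orderings work equally well.
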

	
	\begin{proof}
		For \(m\in\mathbb N_0\), using the characterization of
		\(H^m(\mathbb T^2)\) in terms of the \(L^2\)-norms of derivatives,
		the chain rule, and the fact that the map \((t,x)\mapsto (t,x\pm A(t))\)
		preserves the Lebesgue measure on \(\mathbb T^2\), we obtain
		\[
		\|\Psi_{\pm A}u\|_{H^m(\mathbb T^2)}
		\lesssim_{A,m}
		\|u\|_{H^m(\mathbb T^2)}.
		\]
		
		Since the underlying diffeomorphisms have Jacobian determinant
		equal to \(1\), the operators are unitary on \(L^2(\mathbb T^2)\),
		with
		\[
		\Psi_A^*=\Psi_{-A}.
		\]
		
		Duality gives boundedness at negative integer Sobolev orders, and
		interpolation yields boundedness on \(H^s(\mathbb T^2)\) for every
		\(s\in\mathbb R\). Since \(\Psi_A\) and \(\Psi_{-A}\) are mutually
		inverse, they are bounded automorphisms of \(H^s(\mathbb T^2)\).
	\end{proof}
	
	\begin{corollary}\label{cor:isotropic-regularity-variable}
		Let \(s\in\mathbb R\), and suppose that \(f\in H^s(\mathbb T^2)\).
		Assume that \(a_0\in\mathbb R\setminus\mathbb Q\) has finite
		irrationality measure. Then every distributional solution
		\(u\in\mathcal D'(\mathbb T^2)\) of \(Lu=f\)  satisfies
		\[
		u\in H^{s+1-r}(\mathbb T^2)
		\]
		for every \(r>\mu(a_0)\).
		
		If, in addition, \(\widehat f(0,0)=0\), 
		then there exists a unique distributional solution \(u\) of
		\(Lu=f\) satisfying \(\widehat u(0,0)=0\). 
		This solution belongs to \(H^{s+1-r}(\mathbb T^2)\) for every
		\(r>\mu(a_0)\).
	\end{corollary}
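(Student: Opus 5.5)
The plan is to conjugate by \(\Psi_A\) and transfer the isotropic constant-coefficient theory of Proposition~\ref{prop:isotropic-regularity-constant}\textup{(ii)} through Proposition~\ref{prop:isotropic-conjugation}.

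\emph{Regularity.} Let \(u\in\mathcal D'(\mathbb T^2)\) solve \(Lu=f\) with \(f\in H^s(\mathbb T^2)\), and fix \(r>\mu(a_0)\). Set \(w\coloneq\Psi_Au\) and \(g\coloneq\Psi_Af\). By Proposition~\ref{prop:normal-form-conjugation}, \(w\in\mathcal D'(\mathbb T^2)\) and the intertwining identity \eqref{eq:normal-form-intertwining} gives \(L_0w=g\). By Proposition~\ref{prop:isotropic-conjugation}, \(g\in H^s(\mathbb T^2)\). Since \(a_0\) is real irrational with finite irrationality measure, Proposition~\ref{prop:isotropic-regularity-constant}\textup{(ii)} applied to \(L_0\) yields \(w\in H^{s+1-r}(\mathbb T^2)\). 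Applying Proposition~\ref{prop:isotropic-conjugation} once more, now at order \(s+1-r\), gives \(u=\Psi_{-A}w\in H^{s+1-r}(\mathbb T^2)\).

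If a quantitative form is wanted, one can instead argue directly: the multiplier bound \(|\tau-a_0\xi|\gtrsim\langle(\tau,\xi)\rangle^{1-r}\) follows from Proposition~\ref{prop:mixed-symbol-lower-bounds}\textup{(ii)} with \(\theta=1\), together with the estimate near the axis \(\xi=0\). Combining this with the observation that \(\Psi_{\pm A}\) preserves \(\widehat{\,\cdot\,}(0,0)\) and constants, exactly as in the proof of Theorem~\ref{theo:regularity-by-conjugation}, gives \(\|u-\widehat u(0,0)\|_{H^{s+1-r}}\lesssim\|f\|_{H^s}\).

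\emph{Existence and uniqueness.} Assume \(\widehat f(0,0)=0\). Since the Jacobian of the underlying diffeomorphism is \(1\), we have \(\widehat g(0,0)=\widehat f(0,0)=0\). On the Fourier side, define \(\widehat w(0,0)=0\) and \(\widehat w(\tau,\xi)=\widehat g(\tau,\xi)/(i(\tau-a_0\xi))\) for \((\tau,\xi)\neq(0,0)\); the resonance set of \(L_0\) is \(\{(0,0)\}\), so these denominators never vanish. The coefficients have polynomial growth, so \(w\in\mathcal D'(\mathbb T^2)\), and \(L_0w=g\). Set \(u\coloneq\Psi_{-A}w\). Then \(Lu=\Psi_{-A}L_0w=\Psi_{-A}g=f\) by \eqref{eq:normal-form-intertwining}, and \(\widehat u(0,0)=\widehat w(0,0)=0\). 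The regularity part then gives \(u\in H^{s+1-r}(\mathbb T^2)\) for every \(r>\mu(a_0)\).

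For uniqueness, let \(u_1,u_2\) be two such solutions. The difference \(d=u_1-u_2\) satisfies \(Ld=0\) and \(\widehat d(0,0)=0\). Then \(\Psi_Ad\) solves \(L_0(\Psi_Ad)=0\) and has zero mean, so \(i(\tau-a_0\xi)\widehat{\Psi_Ad}(\tau,\xi)=0\) forces all its Fourier coefficients off the origin to vanish. Hence \(\Psi_Ad=0\), and therefore \(d=0\). Alternatively, uniqueness follows from Corollary~\ref{coro_existence_real}, since \(H^s(\mathbb T^2)\hookrightarrow H^{(s_1,s_2)}_{\operatorname{mix}}(\mathbb T^2)\) for suitable \(s_1,s_2\) by Proposition~\ref{prop:isotropic-mixed-embeddings}.

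The main points requiring care are the invariance of \(H^s(\mathbb T^2)\) under \(\Psi_{\pm A}\) at arbitrary real \(s\) and the preservation of the zero-mean condition. Both are already supplied by Propositions~\ref{prop:isotropic-conjugation} and~\ref{prop:normal-form-conjugation}, so no genuine obstacle is expected.
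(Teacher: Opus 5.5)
Your argument is correct and follows the paper's proof: you conjugate by $\Psi_A$, apply Proposition~\ref{prop:isotropic-regularity-constant}\textup{(ii)} to $L_0w=g$, and transfer back with Proposition~\ref{prop:isotropic-conjugation}; for existence you inline the Fourier-side construction that proves Corollary~\ref{coro_existence_cte}\textup{(ii)}, where the paper instead cites that corollary after embedding $H^s(\mathbb T^2)$ into a mixed space, and your direct uniqueness argument is equally valid. One small slip occurs in your optional quantitative aside: the $\theta=1$ bound $\langle\tau\rangle\langle\xi\rangle^{-r}$ already covers the region near $\xi=0$, and what it misses is the region $|\xi|\gtrsim|\tau|$, where you should use \eqref{eq:diophantine-lower-bound} to get $|\tau-a_0\xi|\gtrsim\langle\xi\rangle^{1-r}\ge\langle(\tau,\xi)\rangle^{1-r}$ for $\xi\neq0$.
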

	
	\begin{proof}
		By Proposition~\ref{prop:isotropic-conjugation}, 
		\(g\coloneq\Psi_Af\in H^s(\mathbb T^2)\). 		
		If \(u\) is a distributional solution of \(Lu=f\) and
		\(w\coloneq\Psi_Au\), then \(L_0w=g\).
		
		Proposition~\ref{prop:isotropic-regularity-constant} therefore gives
		\(w\in H^{s+1-r}(\mathbb T^2)\) 
		for every \(r>\mu(a_0)\). Applying \(\Psi_{-A}\) and using again
		Proposition~\ref{prop:isotropic-conjugation}, we obtain
		\(u\in H^{s+1-r}(\mathbb T^2)\).
		
		Assume now that \(\widehat f(0,0)=0\). Since the conjugation
		preserves the mean, \(\widehat g(0,0)=0\). 
		Moreover, Proposition~\ref{prop:isotropic-mixed-embeddings} gives
		\[
		H^s(\mathbb T^2)
		\hookrightarrow
		\begin{cases}
			H_{\operatorname{mix}}^{(0,s)}(\mathbb T^2),
			& s\geq0,\\
			H_{\operatorname{mix}}^{(s,s)}(\mathbb T^2),
			& s<0.
		\end{cases}
		\]
		
		Hence Corollary~\ref{coro_existence_cte}\textup{(ii)} gives the unique
		solution \(w\) of \(L_0w=g\) satisfying \(\widehat w(0,0)=0\).
		Setting \(u=\Psi_{-A}w\) gives the desired solution of \(Lu=f\).
		Existence, uniqueness, and the condition \(\widehat u(0,0)=0\) are
		preserved by the conjugation. 
	\end{proof}
	
	Using the isotropic invariance of the normal-form conjugation together
	with the sharpness results of \cite{KowKir2026-jfa}, we obtain the
	following variable-coefficient counterpart of the corresponding
	constant-coefficient result.
	
	\begin{corollary}\label{cor:sharpness-isotropic-variable}
		Let \(a\in C^\infty(\mathbb T;\mathbb R)\), and consider
		\(L=\partial_t-a(t)\partial_x\).
		Assume that \(a_0\) is irrational and
		\(\mu(a_0)<\infty\). Then, for every \(s\in\mathbb R\) and every
		\(\rho<\mu(a_0)\), there exists
		\[
		f\in H^s(\mathbb T^2)
		\]
		such that \(Lu=f\) admits a distributional solution, but no
		distributional solution belongs to
		\[
		H^{s+1-\rho}(\mathbb T^2).
		\]
	\end{corollary}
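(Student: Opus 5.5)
The plan is to transfer the constant-coefficient isotropic sharpness to $L$ by the normal-form conjugation. Fix $s\in\mathbb R$ and $\rho<\mu(a_0)$.

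\emph{Step one: constant-coefficient example.} I would not rely blindly on the citation from \cite{KowKir2026-jfa}, but reproduce the construction of Proposition~\ref{prop:sharpness-real-constant-mixed} in the isotropic scale. Choose $r'$ with $\max\{\rho,1\}<r'<\mu(a_0)$. Take a sequence $(\tau_j,\xi_j)$ with $\xi_j\to\infty$ and
\[
0<|\tau_j-a_0\xi_j|<\tfrac1j\,\xi_j^{1-\rho}.
\]
Since $r'>1$, we have $\langle(\tau_j,\xi_j)\rangle\asymp_{a_0}\xi_j$. Define $w_0$ by
\[
\widehat{w_0}(\tau_j,\xi_j)=\langle(\tau_j,\xi_j)\rangle^{-s-1+\rho},
\]
with all other coefficients zero, and set $g\coloneq L_0w_0$. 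Then
\[
\|g\|_{H^s}^2\lesssim\sum_j j^{-2}<\infty,
\]
while every term of the $H^{s+1-\rho}$ series of $w_0$ equals $1$, so $w_0\notin H^{s+1-\rho}(\mathbb T^2)$.

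Every other solution of $L_0w=g$ differs from $w_0$ by an element of $\ker L_0$. Since $a_0$ is irrational, the resonance set is $\{(0,0)\}$, so this kernel consists of the constants. Constants lie in every Sobolev space, hence no distributional solution of $L_0w=g$ belongs to $H^{s+1-\rho}(\mathbb T^2)$.

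\emph{Step two: conjugation.} Set $f\coloneq\Psi_{-A}g$ and $u_0\coloneq\Psi_{-A}w_0$. By Proposition~\ref{prop:isotropic-conjugation}, $f\in H^s(\mathbb T^2)$. By \eqref{eq:normal-form-intertwining}, $Lu_0=\Psi_{-A}L_0w_0=f$, so $Lu=f$ admits a distributional solution.

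Suppose some distributional solution $u$ of $Lu=f$ satisfied $u\in H^{s+1-\rho}(\mathbb T^2)$. Then $w\coloneq\Psi_Au$ solves $L_0w=\Psi_Af=g$, again by \eqref{eq:normal-form-intertwining}. By Proposition~\ref{prop:isotropic-conjugation}, $w\in H^{s+1-\rho}(\mathbb T^2)$, which contradicts step one.

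\emph{Main obstacle.} The only delicate point is the statement that \emph{no} solution is regular. This requires identifying $\ker L_0$ as the constants, and I will make this explicit rather than appeal to uniqueness modulo constants. The conjugation handles the transfer cleanly because $\Psi_{\pm A}$ are mutually inverse bijections on $\mathcal D'(\mathbb T^2)$ that preserve every $H^\sigma(\mathbb T^2)$. Consequently the solution sets of $Lu=f$ and $L_0w=g$ correspond exactly, with regularity preserved in both directions.
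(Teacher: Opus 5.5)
Your proof is correct and follows the paper's argument. You take a constant-coefficient example for $L_0=\partial_t-a_0\partial_x$ and transfer it via $f=\Psi_{-A}g$, using the intertwining identities and the invariance of every $H^\sigma(\mathbb T^2)$ under $\Psi_{\pm A}$ (Proposition~\ref{prop:isotropic-conjugation}) to exclude any regular solution of $Lu=f$. The only difference is that you build the constant-coefficient example explicitly, by adapting Proposition~\ref{prop:sharpness-real-constant-mixed} to isotropic weights and observing that the kernel of $L_0$ consists of constants, whereas the paper cites \cite{KowKir2026-jfa}; this makes your argument self-contained.
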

	
	\begin{proof}
		By the constant-coefficient sharpness result of
		\cite{KowKir2026-jfa}, there exists \(g\in H^s(\mathbb T^2)\)
		such that \(L_0w=g\) is distributionally solvable, but no solution
		belongs to \(H^{s+1-\rho}(\mathbb T^2)\).
		
		Set \(f=\Psi_{-A}g\). Then \(f\in H^s(\mathbb T^2)\), and every
		solution \(w\) of \(L_0w=g\) gives a solution
		\(u=\Psi_{-A}w\) of \(Lu=f\). If some solution \(v\) of \(Lv=f\)
		belonged to \(H^{s+1-\rho}(\mathbb T^2)\), then
		\(\Psi_Av\) would belong to the same Sobolev space and satisfy
		\(L_0(\Psi_Av)=g\), a contradiction.
	\end{proof}
	
	\begin{corollary}\label{cor:isotropic-variable-classical-regularity}
		Assume that
		\(a_0\in\mathbb R\setminus\mathbb Q\) has finite irrationality
		measure. Let \(k>\mu(a_0)\), and suppose that
		\(u\in\mathcal D'(\mathbb T^2)\) is a solution of \(Lu=f\), with
		\(f\in H^k(\mathbb T^2)\). Then
		\[
		u\in C^m(\mathbb T^2),
		\qquad
		m\coloneq
		\left\lceil k-\mu(a_0)\right\rceil-1.
		\]
		
		In particular, if \(k\in\mathbb N_0\), the same conclusion holds
		for every \(f\in C^k(\mathbb T^2)\).
		
		If, in addition, \(\widehat f(0,0)=0\), then there exists a unique
		distributional solution satisfying \(\widehat u(0,0)=0\), and this
		solution has the regularity stated above.
	\end{corollary}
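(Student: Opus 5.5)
The plan is to combine Corollary~\ref{cor:isotropic-regularity-variable} with the isotropic Sobolev embedding $H^\sigma(\mathbb T^2)\hookrightarrow C^m(\mathbb T^2)$ for $\sigma>m+1$, recalled after Corollary~\ref{cor:mixed-Cm-embedding}.

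\emph{Choice of $r$.} With $m=\lceil k-\mu(a_0)\rceil-1$, we have $m<k-\mu(a_0)\le m+1$. Since $k>\mu(a_0)$, this gives $m\ge0$. Because the inequality $m<k-\mu(a_0)$ is strict, we may pick $r>\mu(a_0)$ close enough to $\mu(a_0)$ that $m<k-r$, that is, $k+1-r>m+1$.

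\emph{Regularity.} Corollary~\ref{cor:isotropic-regularity-variable} with $s=k$ gives $u\in H^{k+1-r}(\mathbb T^2)$. Since $k+1-r>m+1$, the isotropic Sobolev embedding yields $u\in C^m(\mathbb T^2)$. This embedding is the standard absolute-convergence argument: by Cauchy--Schwarz, $\sum|(\tau,\xi)|^{j}|\widehat u|$ is finite for all $j\le m$, because $\sum\langle(\tau,\xi)\rangle^{-2(\sigma-j)}$ converges in dimension two when $\sigma-j>1$. It can simply be cited.

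\emph{The case $f\in C^k$.} For $k\in\mathbb N_0$, every $f\in C^k(\mathbb T^2)$ has $\partial^\alpha f\in C^0\subset L^2$ for $|\alpha|\le k$. Hence $f\in H^k(\mathbb T^2)$ by the derivative characterization of integer-order Sobolev norms, and the previous step applies.

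\emph{Existence and uniqueness.} If $\widehat f(0,0)=0$, the second part of Corollary~\ref{cor:isotropic-regularity-variable} provides the unique solution with $\widehat u(0,0)=0$. As a distributional solution of $Lu=f$ with $f\in H^k$, it falls under the regularity statement just proved, so it lies in $C^m(\mathbb T^2)$.

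There is no real obstacle here. The only point requiring care is the strict-inequality bookkeeping in the choice of $r$ when $k-\mu(a_0)$ is an integer; the ceiling in the definition of $m$ is designed to handle exactly that case.
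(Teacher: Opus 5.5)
Your proposal is correct and follows essentially the same route as the paper. Both choose $\mu(a_0)<r<k-m$, apply Corollary~\ref{cor:isotropic-regularity-variable} to get $u\in H^{k+1-r}(\mathbb T^2)$ with $k+1-r>m+1$, conclude with the isotropic Sobolev embedding, and invoke the second part of the same corollary for existence and uniqueness.
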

	
	\begin{proof}
		By the definition of \(m\) we have \(m<k-\mu(a_0)\). 
		Choose \(r\) such that
		\[
		\mu(a_0)<r<k-m.
		\]
		
		Corollary~\ref{cor:isotropic-regularity-variable} gives
		\(u\in H^{k+1-r}(\mathbb T^2)\). 
		Since \(k+1-r>m+1\), the Sobolev embedding on \(\mathbb T^2\)
		yields \(u\in C^m(\mathbb T^2)\). The existence and uniqueness
		statement follows from the corresponding part of
		Corollary~\ref{cor:isotropic-regularity-variable}.
	\end{proof}
	
	\begin{remark}
		Set \(\varepsilon\coloneq k-\mu(a_0)>0\). For isotropic data
		\(f\in H^k(\mathbb T^2)\), the direct mixed smoothness argument of
		Section~\ref{sec:variable-real} gives
		\[
		u\in
		C^{
			\left\lceil(\varepsilon+\frac12)/3\right\rceil-1
		}(\mathbb T^2),
		\]
		whereas the normal-form argument gives
		\[
		u\in C^{\lceil\varepsilon\rceil-1}(\mathbb T^2).
		\]
		
		The two conclusions agree when \(0<\varepsilon\leq1\), while the
		normal-form result is strictly stronger when \(\varepsilon>1\).
		The mixed smoothness estimates nevertheless retain separate
		information on the regularity in the two variables.
	\end{remark}
	
	\begin{example}\label{example:isotropic-quadratic-variable}
		Consider
		\[
		L
		=
		\partial_t
		-
		\frac{2}{1+\cos^2t}\partial_x.
		\]
		Its averaged coefficient is \(a_0=\sqrt2\). Since \(\sqrt2\) is a
		quadratic irrational, the endpoint Diophantine estimate holds with
		\(r=\mu(\sqrt2)=2\).
		
		Hence, if \(f\in H^s(\mathbb T^2)\) and \(Lu=f\), then
		\(u\in H^{s-1}(\mathbb T^2)\).
		In particular, for \(k>2\), 
		\[
		f\in H^k(\mathbb T^2)
		\quad\Longrightarrow\quad
		u\in C^{k-3}(\mathbb T^2),
		\qquad k\in\mathbb N.
		\]
		
		For example, if \(k=6\), the direct mixed smoothness method of
		Section~\ref{sec:variable-real} guarantees \(u\in C^1(\mathbb T^2)\),
		whereas the normal-form argument gives \(u\in C^3(\mathbb T^2)\).
		
		If, in addition, \(\widehat f(0,0)=0\), there exists a unique
		distributional solution satisfying \(\widehat u(0,0)=0\).
		More generally, any two distributional solutions differ by a
		constant.
	\end{example}
	
	\begin{remark}
		One may also consider
		\[
		L
		=
		\partial_t
		-
		2\arccos\left(\frac12\cos t\right)\partial_x.
		\]
		
		Since \(a(t+\pi)=2\pi-a(t)\), the averaged coefficient is \(a_0=\pi\). 
		In contrast with the preceding example, the exact value of 
		\(\mu(\pi)\) is not known. The corresponding regularity thresholds are 
		therefore naturally expressed in terms of \(\mu(\pi)\). Known upper 
		bounds for \(\mu(\pi)\) nevertheless yield explicit, though possibly
		nonoptimal, Sobolev regularity estimates.
	\end{remark} 
	
	%=================================================
	\subsection{Transferred rational and Liouville obstructions} \
	%=================================================
	
	Given a map
	\(\boldsymbol{\rho}\colon\mathbb R^2\longrightarrow\mathbb R^2\),
	we say that the associated family of target orders
	\[
	\left\{
	\mathbf{k}-\boldsymbol{\rho}(\mathbf{k})
	:
	\mathbf{k}\in\mathbb R^2
	\right\}
	\]
	is cofinal if, for every \(s\geq0\), there exists
	\(\mathbf{k}=(k_1,k_2)\in\mathbb R^2\) such that
	\begin{equation*}\label{eq:cofinal-orders-normal-form}
		k_j-\rho_j(\mathbf{k})\geq s,
		\qquad
		j=1,2.
	\end{equation*}
	
	Equivalently, both components of the target orders can be made
	arbitrarily large simultaneously, or, in other words, the map
	\[
	\mathbf{k}
	\longmapsto
	\min_{j=1,2}
	\bigl\{k_j-\rho_j(\mathbf{k})\bigr\}
	\]
	is unbounded above.
	
	The next proposition shows that no universal family of mixed
	smoothness estimates with this property can hold when \(a_0\) is
	rational or Liouville.
	
	\begin{proposition}\label{prop:normal-form-arithmetic-obstructions}
		Assume that \(a_0\) is rational or Liouville, and let
		\(\boldsymbol{\rho}\colon\mathbb R^2\to\mathbb R^2\) be such that
		the associated family of target orders is cofinal. Then there exist
		\[
		\mathbf{k}\in\mathbb R^2,\qquad
		f\in
		H_{\operatorname{mix}}^{\mathbf{k}}(\mathbb T^2),
		\qquad
		u\in\mathcal D'(\mathbb T^2),
		\]
		such that \(Lu=f\) and
		\[
		u\notin
		H_{\operatorname{mix}}^{
			\mathbf{k}-\boldsymbol{\rho}(\mathbf{k})}(\mathbb T^2).
		\]
	\end{proposition}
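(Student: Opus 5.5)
Using cofinality, I will first pick \(\mathbf{k}\) with \(s_j:=k_j-\rho_j(\mathbf{k})\) as large as I like, and then build a bad solution for the constant-coefficient field \(L_0=\partial_t-a_0\partial_x\). The plan is to transport this solution to \(L\) through the normal-form conjugation \(\Psi_{-A}\), and to control the damage done by conjugation with Proposition~\ref{prop:conjugation-mixed-smoothness} and Proposition~\ref{prop:isotropic-mixed-embeddings}. The key observation is that large target orders force \(u\) to be smooth in a robust isotropic sense. Such smoothness is preserved by conjugation, so it is enough to exhibit data of controlled regularity whose solution fails to be isotropically smooth.

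\textbf{Step 1 (choice of \(\mathbf{k}\)).} Fix a large isotropic order \(N\), say \(N=1\) suffices at the end. By cofinality, choose \(\mathbf{k}=(k_1,k_2)\) with \(\min\{s_1,s_2\}\geq N\), where \(s_j=k_j-\rho_j(\mathbf{k})\). Then Proposition~\ref{prop:isotropic-mixed-embeddings}(ii) gives \(H_{\operatorname{mix}}^{(s_1,s_2)}(\mathbb T^2)\hookrightarrow H^{N}(\mathbb T^2)\), since \(s_1,s_2\geq N\geq0\) and hence \(\min\{s_1,s_2,s_1+s_2\}\geq N\). It therefore suffices to find \(f\in H_{\operatorname{mix}}^{\mathbf{k}}(\mathbb T^2)\) and a solution \(u\) of \(Lu=f\) with \(u\notin H^{N}(\mathbb T^2)\).

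\textbf{Step 2 (data that is smooth for \(L\)).} I construct \(g\in C^\infty(\mathbb T^2)\) and \(w\in\mathcal D'(\mathbb T^2)\) with \(L_0w=g\) and \(w\notin H^{M}(\mathbb T^2)\), where \(M=N\) works.
\begin{itemize}
\item In the rational case I take \(g=0\) and \(w\) given by Proposition~\ref{prop:rational-no-mixed-regularity} applied with indices \((M,M)\). Since \(w\notin H_{\operatorname{mix}}^{(M,M)}(\mathbb T^2)\) and \(H^{2M}(\mathbb T^2)\hookrightarrow H_{\operatorname{mix}}^{(M,M)}(\mathbb T^2)\), we get \(w\notin H^{2M}(\mathbb T^2)\). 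To avoid bookkeeping, I will instead directly take coefficients \(\widehat w(mp,mq)=\langle m\rangle^{-M}\), with \(p/q=a_0\) in lowest terms, so that \(w\notin H^{M}(\mathbb T^2)\).
\item In the Liouville case I use the same construction as Proposition~\ref{prop:liouville-no-mixed-regularity}, with \(\widehat w(\tau_j,\xi_j)=\langle\xi_j\rangle^{-M}\) along the approximating sequence. Because \(\langle\tau_j\rangle\asymp\langle\xi_j\rangle\), the isotropic norm of \(w\) at order \(M\) diverges, while \(g=L_0w\in C^\infty(\mathbb T^2)\).
\end{itemize}

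\textbf{Step 3 (transfer).} Set \(f=\Psi_{-A}g\in C^\infty(\mathbb T^2)\subset H_{\operatorname{mix}}^{\mathbf{k}}(\mathbb T^2)\) and \(u=\Psi_{-A}w\). Then \(Lu=f\) by Proposition~\ref{prop:normal-form-conjugation}. If \(u\) belonged to \(H_{\operatorname{mix}}^{\mathbf{k}-\boldsymbol{\rho}(\mathbf{k})}(\mathbb T^2)\), it would lie in \(H^N(\mathbb T^2)\) by Step 1. Proposition~\ref{prop:isotropic-conjugation} would then give \(w=\Psi_Au\in H^N(\mathbb T^2)\), contradicting Step 2.

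The only delicate point is working in the isotropic scale, so that conjugation costs nothing. Working in the mixed scale would instead lose \(|s_1|\) in the \(x\)-index. Cofinality guarantees that both target indices are nonnegative and large, which is exactly what the embedding into an isotropic space needs. Since \(f\) is smooth, the particular value of \(\mathbf{k}\) imposes no constraint on the data.
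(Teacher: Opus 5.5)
Your proposal is correct and follows essentially the same route as the paper: a non-regular solution $w$ of $L_0w=g$ with $g\in C^\infty(\mathbb T^2)$, taken from Propositions~\ref{prop:rational-no-mixed-regularity} and~\ref{prop:liouville-no-mixed-regularity}, is transferred by $\Psi_{-A}$, and the contradiction comes from cofinality combined with Proposition~\ref{prop:isotropic-mixed-embeddings}. The only difference is the order of the bookkeeping. The paper takes $w\notin L^2(\mathbb T^2)$, uses only that $\Psi_{\pm A}$ preserve $C^\infty(\mathbb T^2)$ to conclude that $u$ is not smooth, and then chooses $\mathbf{k}$ for some $s$ with $u\notin H^s(\mathbb T^2)$. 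You instead fix $N$ and $\mathbf{k}$ in advance and transfer $w\notin H^N(\mathbb T^2)$ to $u$ via Proposition~\ref{prop:isotropic-conjugation}; this already works with $N=0$, using only the unitarity of $\Psi_{\pm A}$ on $L^2(\mathbb T^2)$.
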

	
	\begin{proof}
		By Propositions~\ref{prop:rational-no-mixed-regularity} and
		\ref{prop:liouville-no-mixed-regularity}, according to the
		arithmetic case, there exist
		\[
		w\in\mathcal D'(\mathbb T^2)\setminus L^2(\mathbb T^2),
		\qquad
		g\in C^\infty(\mathbb T^2),
		\qquad
		L_0w=g.
		\]
		
		Set
		\[
		u\coloneq\Psi_{-A}w,
		\qquad
		f\coloneq\Psi_{-A}g.
		\]
		
		Then \(Lu=f\) and \(f\in C^\infty(\mathbb T^2)\). Moreover,
		\(u\notin C^\infty(\mathbb T^2)\), since otherwise
		\(w=\Psi_Au\) would be smooth. Hence there exists
		\(s\in\mathbb N_0\) such that \(u\notin H^s(\mathbb T^2)\).
		
		By cofinality, choose \(\mathbf{k}\in\mathbb R^2\) such that
		\[
		k_j-\rho_j(\mathbf{k})\geq s,
		\qquad j=1,2.
		\]
		
		Since \(f\) is smooth,
		\(f\in
		H_{\operatorname{mix}}^{\mathbf{k}}(\mathbb T^2)\).
		If
		\[
		u\in
		H_{\operatorname{mix}}^{
			\mathbf{k}-\boldsymbol{\rho}(\mathbf{k})}(\mathbb T^2),
		\]
		then monotonicity and
		Proposition~\ref{prop:isotropic-mixed-embeddings} would give
		\[
		u\in
		H_{\operatorname{mix}}^{(s,s)}(\mathbb T^2)
		\hookrightarrow H^s(\mathbb T^2),
		\]
		a contradiction.
	\end{proof}
	
	\begin{remark}
		Proposition~\ref{prop:normal-form-arithmetic-obstructions} does not
		assert the absence of mixed smoothness estimates in the rational or
		Liouville cases; it shows that no universal cofinal family of target
		orders can hold.
		
		In contrast, suppose that
		\(a_0\in\mathbb R\setminus\mathbb Q\) has finite irrationality
		measure, fix \(r>\mu(a_0)\), and set
		\[
		\boldsymbol{\rho}_r(k_1,k_2)
		\coloneq
		\bigl(0,r-1+|k_1|\bigr).
		\]
		
		By Theorem~\ref{theo_regularity_real}, with
		\(q=k_1\) and \(\theta=0\),
		\[
		f\in
		H_{\operatorname{mix}}^{(k_1,k_2)}(\mathbb T^2)
		\quad\Longrightarrow\quad
		u\in
		H_{\operatorname{mix}}^{
			(k_1,k_2)-\boldsymbol{\rho}_r(k_1,k_2)}(\mathbb T^2).
		\]
		
		This family is cofinal: given \(s\geq0\), the choice
		\((k_1,k_2)=(s,2s+r-1)\)  gives
		\[
		(k_1,k_2)-\boldsymbol{\rho}_r(k_1,k_2)=(s,s).
		\]
		
		Thus finite irrationality measure permits an explicit cofinal
		family of mixed smoothness estimates, whereas rational and
		Liouville averages obstruct every universal cofinal family of this
		type.
	\end{remark}

	%=====================================================
	%=====================================================	
	\section*{Acknowledgments} 
	%=====================================================
	%=====================================================

	The first and second authors were supported in part by CNPq -- Brasil (grants 301573/2025-5, 302048/2026-0, and 302867/2026-0). The third author was supported, in part, by the S\~ao Paulo Research Foundation (FAPESP), Brazil, Process
	Numbers 2025/08151-5 and 2025/26296-0.

\end{document}